\numberwithin{equation}{section}
\DeclareMathOperator{\supp}{supp}
\theoremstyle{plain}
\newtheorem{theorem}{Theorem}[section]
\newtheorem{lemma}[theorem]{Lemma}
\newtheorem{corollary}[theorem]{Corollary}
\theoremstyle{definition}
\newtheorem{definition}[theorem]{Definition}
\newcommand{\Z}{\mathbb{Z}}
\newcommand{\R}{\mathbb{R}}
\newtheorem {defn}{Definition}[section]
\def\R{\mathbb R}
\def\12{\frac{1}{2}}
\def\M{\mathcal{M}}
\def\Z{\mathbb Z}
\def\b1 {\dot{B}_1^{-\alpha,1}}
\def\M {\mathbb M}
\begin{document}

\title[Singular integrals with product kernels]{\bf Singular integrals with product kernels associated with mixed homogeneities and Hardy spaces}
\author{{\bf  Yongsheng Han, Steven Krantz and Chaoqiang Tan}
    \\\footnotesize\scriptsize  \textit{}
    \\\footnotesize\scriptsize  \textit {}
    \\\footnotesize\textit{}}

\date{}
\maketitle \vspace{0.2cm}

\noindent{{\bf Abstract}} \quad
This paper is motivated by Phong and Stein's paper on non-standard singular integrals with mixed homogeneities. Our purpose is to study these new non-standard convolution singular integrals and establish the boundedness of these singular integrals on the Hardy spaces.
\medskip

\noindent{{\bf MR(2010) Subject Classification}} \quad 42B20, 42B25, 46E35

\noindent{{\bf Keywords}} \quad non-standard singular integral, Hardy space, isotropic and non-isotropic homogeneities, Littlewood-Paley theory.


\section{\hspace{-0.3cm}{\bf} Introduction and Statement of Main Results \label{s1}}

This paper is motivated by Phong and Stein's work in \cite{ps}. The
purpose of this work  is to establish the boundedness of singular
integrals with product kernels associated to mixed homogeneities on
the Hardy spaces. In order to explain the questions we deal with let
us begin by recalling the Calder\'on-Zygmund singular integrals in which one studied convolution operators appearing
in elliptic partial differential equations.  A well known example is given by the Riesz transforms. By the Fourier transform and the Plancheral identity it is easy to see that the Riesz transforms are bounded on $L^2(\R^n).$ However, it is not obvious that the Riesz transforms are bounded on $L^p(\R^n), 1<p<\infty.$ This was achieved by the real variable methods developed by Calder\'on and Zygmund. To do this, they started with a function $\Omega (x)$ in $C(\R^n)\setminus \{0\}$ which is homogeneous of degree 0 and satisfies the cancellation condition $\int\limits_{S^{n-1}}\Omega (x)d\omega(x)=0,$ where $d\omega$
is the usual rotationally invariant probability measure on the unit sphere $S^{n-1}.$
Then the Calder\'on-Zygmund singular integral convolution operator is given by
$$T(f)(x)=\lim\limits_{\varepsilon\rightarrow 0^+}T_\varepsilon(f)(x)=\lim\limits_{\varepsilon\rightarrow 0^+}\int\limits_{|y|\geqslant \varepsilon}\frac{\Omega(y)}{|y|^n}f(x-y)dy,$$
where the limit exists if $f$ is continuous, H\"older, and square summable.
The Calder\'on-Zygmund real variable theory implies that applying the Fourier transform and the Plancheral identity gives $\|T_\varepsilon f\|_2\leqslant C\|f\|_2$ with the constant $C$ is independent of $\varepsilon$ and $T(f)$ is
bounded on $L^2(\R^n)$. Then, using the Calder\'on-Zygmund decomposition lemma together with the the smoothness
condition on $\Omega$ yields that $T$ is of  weak-type (1,1).   By interpolation between the weak-type $(1,1)$ and $L^2(\R^n)$ results, $T$ is bounded on $L^p(\R^n), 1<p\leqslant 2$.  Finally, by a duality argument, $T$ is bounded on $L^p(\R^n), 2\leqslant p<\infty.$

We remark that the cancellation condition $\int\limits_{S^{n-1}}\Omega (x)d\omega(x)=0$ plays a crucial role in the Calder\'on-Zygmund real variable theory. See \cite{s2} for more details.

To consider the composition of Calder\'on-Zygmund singular integral convolution operators, Calder\'on and Zygmund discovered that to compose two convolution operators,
$T_1$ and $T_2,$ it is enough to employ the product of the
corresponding multipliers $m_1(\xi)$ and $m_2(\xi).$ However, the
symbol $m_3(\xi)=m_1(\xi)m_2(\xi)$ does not necessarily have mean value zero
on the unit sphere, so they considered the algebra of
operators $cI + T,$ where $c$ is a constant, $I$ is the identity
operator and $T$ is the operator introduced by them.

In 1965,
Calder\'on considered again the problem of the symbolic calculus of
the second generation of Calder\'on-Zygmund singular integral
operators with the minimal regularity with respect to $x$ on kernels
$L_1(x,y)$ and $L_2(x,y)$.  This problem reduced to the study of the commutator which was
the first non-convolution operator considered in harmonic analysis. To
treat the composition of two operators associated with different
homogeneities, let $e(\xi)$ be a function on $\R^n$ homogeneous of
degree $0$ in the isotropic sense and smooth away from the origin.
Similarly, suppose that $h(\xi)$ is a function on $\R^n$ homogeneous
of degree $0$ in the non-isotropic sense related to the heat
equation, and also smooth away from the origin. Then it is
well-known that the Fourier multipliers $T_1$ defined by
$\widehat{T_1(f)}(\xi)=e(\xi)\widehat{f}(\xi)$ and $T_2$ given by
$\widehat{T_2(f)}(\xi)=h(\xi) \widehat{f}(\xi)$ are both bounded on
$L^p$ for $ 1<p<\infty,$ and satisfy various other regularity
properties such as being of weak-type (1, 1). It was well known that
$T_1$ and $T_2$ are bounded on the classical isotropic and
non-isotropic Hardy spaces, respectively. Rivier\'e in \cite{WW} asked
the question: Is the composition $T_1 \circ T_2$ still of weak-type
(1,1)? Phong and Stein in \cite{ps} answered this question and gave a
necessary and sufficient condition for which $T_1 \circ T_2$ is of
weak-type (1,1). The operators that Phong and Stein studied are in fact
compositions with different kinds of homogeneities which arise
naturally in the $\bar\partial$-Neumann problem. Moreover, Phong and Stein considered a class of singular integral convolution operators of a non-standard type, namely, the kernel of the operator in this class is the product of functions with different types of homogeneities. See also \cite{s1} for more motivations for such non-standard singular integrals in several complex variables. These motivate the work in the present paper.

In order to describe more precisely questions and results studied in
this paper, we begin by considering all functions and operators
defined on $\R^n.$ We write $\R^n=\R^{n-1}\times \R$ with
$x=(x^\prime, x_n)$ where $x^\prime\in \R^{n-1}$ and $ x_n\in \R.$ We
consider two kinds of homogeneities:
$$\delta_e:(x^\prime, x_n)\rightarrow (\delta x^\prime, \delta x_n), \delta>0$$
and
$$\delta_h:(x^\prime, x_n)\rightarrow (\delta x^\prime, \delta^2 x_n), \delta>0.$$
The first are the classical isotropic dilations occurring in the
classical Calder\'on-Zygmund singular integrals and the second
are non-isotropic and related to the heat equation (also the Heisenberg
group.) These two dilations correspond to two different metrics, that is, for $x=(x', x_n) \in \R^{n-1} \times \R$ we denote $|x|_e = (|x'|^2
+ |x_n|^2)^{\frac12}$ for the isotropic metric and $|x|_h = (|x'|^2 + |x_n|)^{\frac12}$ for the non-isotropic metric.

It is well known that any Calder\'on-Zygmund singular integral convolution
operator associated with the isotropic homogeneity is also bounded
on the classical Hardy space $H^p(\R^n)$ with $ 0<p\leqslant 1,$
where the classical Hardy space $H^p(\R^n)$ was introduced by
Fefferman and Stein in \cite{fs}. This space is associated with the
isotropic homogeneity. To see this, let $\psi^{(1)} \in
\mathcal{S}(\R^n)$ with

\begin{eqnarray} \label{1.1}
\text{supp} \ \widehat{\psi^{(1)}}\subseteq \left \{(\xi',
\xi_n) \in \R^{n-1} \times \R : \frac{1}{2} \leqslant |\xi|_e
\leqslant 2 \right \}, \end{eqnarray} and

\begin{eqnarray} \label{1.2}
\sum \limits_{j \in \Z} |\widehat{\psi^{(1)}}(2^{-j}\xi', 2^{-j}
\xi_n)|^2 =1 \text{ for all } (\xi', \xi_n) \in \R^{n-1} \times \R
/\{(0,0)\}. \end{eqnarray}

The Littlewood-Paley-Stein  square function of
$f\in{\mathcal{S}}^\prime( \R^n)$ then is defined by

$$g(f)(x)=\Big\{\sum \limits_{j \in \Z} \vert \psi^{(1)}_j\ast f(x)\vert^2\Big\}^{\frac{1}{2}},$$
where $\psi^{(1)}_j(x^\prime, x_n)=2^{jn}\psi^{(1)}(2^jx^\prime,
2^jx_n).$ Note that the isotropic homogeneity is involved in the definition of $g(f).$
The classical Hardy space $H^p(\R^n)$ can then be characterized by
$$ H^p(\R^n)=\{ f\in \mathcal{S}'/\mathcal{P}(
\R^{n}): g(f)\in L^p(\R^n)\},$$ where $\mathcal{S}'/\mathcal{P}$
denotes the space of distributions modulo polynomials. If $f\in
H^p(\R^n),$ then the $H^p$ norm of $f$ is defined by $\Vert
f\Vert_{H^p}=\Vert g(f)\Vert_{L^p}.$

As mentioned above, a Calder\'on-Zygmund singular integral convolution
operator associated with the non-isotropic homogeneity is bounded on
$L^p, 1<p<\infty.$ See \cite{fr1, fr2}. However, it is not bounded on the classical Hardy
space $H^p(\R^n)$ but  bounded on the non-isotropic Hardy space. The
non-isotropic Hardy space can also be characterized by the
non-isotropic Littlewood-Paley-Stein  square function. To be more
precise, let $\psi^{(2)} \in \mathcal{S}(\R^n)$ with
\begin{eqnarray} \label{1.3}
\text{supp} \ \widehat{\psi^{(2)}}\subseteq \left \{(\xi', \xi_n)
\in \R^{n-1} \times \R : \frac{1}{2} \leqslant |\xi|_h \leqslant 2
\right \},
\end{eqnarray}
and
\begin{eqnarray} \label{1.4}
\sum \limits_{k \in \Z} |\widehat{\psi^{(2)}}(2^{-k}\xi', 2^{-2k}
\xi_n)|^2 =1 \text{ for all } (\xi', \xi_n) \in \R^{n-1} \times
\R\setminus \{(0,0)\}.\end{eqnarray}

We then define $g_h(f),$ the non-isotropic Littlewood-Paley-Stein
square function of $f\in{\mathcal{S}}^\prime(\R^n),$ by
$$g_h(f)(x)=\Big\{ \sum\limits_{k \in \Z }\vert \psi^{(2)}_k\ast f(x)\vert^2\Big\}^{\frac{1}{2}},$$
where $\psi^{(2)}_k(x^\prime, x_n)=2^{k(n+1)}\psi(2^kx^\prime,
2^{2k}x_n).$ Note again that the non-isotropic homogeneity is
involved in the definition of $g_h(f).$ The non-isotropic Hardy space $H_h^p(\R^n)$
can be characterized by
$$ H_h^p(\R^n)=\{ f\in \mathcal{S}'/\mathcal{P}(
\R^{n}): g_h(f)\in L^p(\R^n)\}$$ and if $f\in H_h^p(\R^n),$ then the
$H_h^p$ norm of $f$ is defined by $\Vert f\Vert_{H_h^p}=\Vert
g_h(f)\Vert_{L^p}.$

If $T_1$ and $T_2$ are Calder\'on-Zygmund singular integrals with
isotropic and non-isotropic homogeneities, respectively, then the
composition $T_1 \circ T_2$ is always bounded on $L^p, 1<p<\infty$,
however, in general, bounded neither on the classical Hardy space
$H^p(\R^n)$ nor on the non-isotropic Hardy space $H_h^p(\R^n).$ In
\cite{hllrs} the authors developed a new Hardy space theory
associated with different homogeneities such that the composition
$T_1 \circ T_2$ is bounded on this new Hardy space. More precisely,
suppose that $\psi^{(1)}$ and $\psi^{(2)}$ are functions satisfying the
conditions in $(\ref{1.1})$-
$(\ref{1.2})$ and $(\ref{1.3})$-
$(\ref{1.4})$, respectively. Let
$\psi_{j,k}(x)=\psi^{(1)}_j\ast \psi^{(2)}_k(x).$ Define the
Littlewood-Paley-Stein  square function associated with two
different homogeneities by
$$g_{\rm com}(f)(x)=\Big\{ \sum\limits_{j,k \in \Z}\vert \psi_{j,k}\ast f(x)\vert^2\Big\}^{\frac{1}{2}}.$$
We remark that a significant feature is that the multiparameter structure is
involved in the above Littlewood-Paley-Stein  square function.
As in the classical case, it is not difficult to check that for
$1<p<\infty,$
$$ \Vert g_{\rm com}(f)\Vert_{L^p}\approx \Vert f\Vert_{L^p}.$$
To introduce the Hardy space associated with two different homogeneities, in \cite{hllrs} the authors first prove the following
discrete Calder\'on identity:

\begin{theorem}\label{tm1.1} Suppose that $\psi^{(1)}$ and $\psi^{(2)}$ are functions
    satisfying conditions in $(\ref{1.1})$-
    $(\ref{1.2})$ and $(\ref{1.3})$-
    $(\ref{1.4})$, respectively. Let
    $\psi_{j,k}(x)=\psi^{(1)}_j\ast \psi^{(2)}_k(x).$ Then
    \begin{eqnarray*}
        f(x', x_n)&=&  \sum \limits_{j,k\in \Z}  \sum \limits_{(\ell',\ell_n
            ) \in \Z^{n-1}\times \mathbb Z}  2^{-(n-1)(
            j \wedge k)} \ 2^{-( j \wedge 2k)} (\psi_{j,k} \ast f)(2^{-( j \wedge k)}\ell', 2^{-( j \wedge 2k)}\ell_n)\\  &&\times
        \psi_{j,k}(x'-2^{-( j \wedge k)}\ell', x_n-2^{-( j \wedge
            2k)}\ell_n) ,\end{eqnarray*} where the series converges in
    $L^2(\R^n),\mathcal{S}_\infty(\R^n)=\{f\in \mathcal{S}: \int\limits_{\R^n}
    f(x)x^\alpha dx=0, |\alpha|\geqslant 0\}$ and
    $\mathcal{S}'/\mathcal{P}(\R^{n}).$  Here $\wedge$ denotes the maximum of the two
    indicated values.
\end{theorem}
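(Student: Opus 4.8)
The plan is to establish the identity by starting from a continuous Calderón reproducing formula, then discretizing it by sampling on a sufficiently fine lattice and controlling the error. First I would fix $\psi^{(1)},\psi^{(2)}$ as in the hypotheses and set $\psi_{j,k}=\psi^{(1)}_j\ast\psi^{(2)}_k$. Using \eqref{1.2} and \eqref{1.4} together with the support conditions \eqref{1.1} and \eqref{1.3}, one checks that $\sum_{j,k}\widehat{\psi_{j,k}}(\xi)\overline{\widehat{\psi_{j,k}}(\xi)}$ is a bounded function bounded below away from the origin; after a routine modification (replacing $\psi^{(1)},\psi^{(2)}$ by suitable $\tilde\psi^{(1)},\tilde\psi^{(2)}$ with the same support and homogeneity properties, built via the Fourier transform) we may assume the normalized resolution of the identity
\begin{equation*}
f=\sum_{j,k\in\Z}\psi_{j,k}\ast\psi_{j,k}\ast f,
\end{equation*}
with convergence in $L^2(\R^n)$, in $\mathcal S_\infty(\R^n)$, and in $\mathcal S'/\mathcal P(\R^n)$; this is the continuous version of the claimed formula.

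Next I would discretize the inner convolution. For each pair $(j,k)$ the function $\psi_{j,k}\ast f$ has Fourier transform supported in a region of ``isotropic size'' $2^{j\wedge k}$ in the $\xi'$ variables and size $2^{j\wedge 2k}$ in the $\xi_n$ variable (this is where one uses that convolving a scale-$2^j$ isotropic bump with a scale-$2^k$ parabolic bump localizes frequencies to the intersection of the two annuli). Hence $\psi_{j,k}\ast\psi_{j,k}\ast f$ is, after the obvious anisotropic rescaling, a band-limited function, and by the sampling theorem it is determined by its values on the lattice $\{(2^{-(j\wedge k)}\ell',2^{-(j\wedge 2k)}\ell_n):(\ell',\ell_n)\in\Z^{n-1}\times\Z\}$. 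Writing the Riemann sum for $\psi_{j,k}\ast f$ against $\psi_{j,k}(x'-\cdot,x_n-\cdot)$ over the cubes $Q_{\ell',\ell_n}$ of sidelengths $2^{-(j\wedge k)}$ and $2^{-(j\wedge 2k)}$ centered at the lattice points produces exactly the right-hand side of the stated formula, with the volume factor $2^{-(n-1)(j\wedge k)}2^{-(j\wedge 2k)}$ appearing as $|Q_{\ell',\ell_n}|$. The error between the integral $\psi_{j,k}\ast(\psi_{j,k}\ast f)(x)$ and this Riemann sum is controlled, using the smoothness and decay of $\psi_{j,k}$ (uniform after rescaling) and a mean-value estimate on each cube, by a fixed constant times the lattice spacing; summing in $(j,k)$ and invoking an almost-orthogonality (Cotlar--Stein type) estimate for the family $\{\psi_{j,k}\ast\psi_{j,k}\}$ shows the total error operator has norm $<1$ on each of the three spaces, so it can be inverted and absorbed, yielding the identity as stated.

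The main obstacle, and the point deserving the most care, is the two-parameter almost-orthogonality needed both to sum the discretization errors and to justify convergence in $\mathcal S_\infty$ and $\mathcal S'/\mathcal P$. Because $\psi_{j,k}$ carries a genuinely multiparameter (mixed-homogeneity) structure, the kernel $\psi_{j,k}\ast\psi_{j',k'}$ does not enjoy a single-parameter decay in $|j-j'|+|k-k'|$; one must instead track the interaction of the isotropic scales $j,j'$ with the parabolic scales $k,k'$ and exploit the support conditions in the two different metrics $|\cdot|_e$ and $|\cdot|_h$ simultaneously. Concretely, I would prove an estimate of the form
\begin{equation*}
\bigl|\widehat{\psi_{j,k}}(\xi)\,\widehat{\psi_{j',k'}}(\xi)\bigr|
\lesssim 2^{-\e|j-j'|}2^{-\e|k-k'|}\bigl(\text{cutoff to the common annulus}\bigr),
\end{equation*}
and transfer it to a kernel estimate by integration by parts in both frequency blocks. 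Once this two-parameter decay is in hand, the duality and density arguments on $\mathcal S_\infty/(\mathcal S'/\mathcal P)$ run as in the classical one-parameter case, and the convergence assertions follow.
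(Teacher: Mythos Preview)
The paper does not prove Theorem~\ref{tm1.1}: it is quoted from \cite{hllrs} as an input, and no argument is given here. So there is no ``paper's own proof'' to compare against; you are effectively proposing a proof of a cited result.

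Your outline follows the standard Frazier--Jawerth template, and the continuous Calder\'on identity $f=\sum_{j,k}\psi_{j,k}\ast\psi_{j,k}\ast f$ indeed holds directly from \eqref{1.2}--\eqref{1.4} (no modification of $\psi^{(1)},\psi^{(2)}$ is needed, since the product of the two partitions of unity already equals $1$). The Fourier-support bookkeeping you describe is also correct: on the support of $\widehat{\psi_{j,k}}$ one has $|\xi'|\le 2\cdot 2^{\min(j,k)}$ and $|\xi_n|\le \min(2\cdot 2^{j},\,4\cdot 2^{2k})$, and in every case these fit inside the dual box $[-\pi 2^{j\wedge k},\pi 2^{j\wedge k}]^{n-1}\times[-\pi 2^{j\wedge 2k},\pi 2^{j\wedge 2k}]$ (recall $\wedge=\max$ in this paper).

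There is one genuine slip in your plan. The Riemann-sum-plus-error-inversion argument you sketch does \emph{not} return the stated identity with the same $\psi_{j,k}$ on both sides: inverting $I+R$ replaces the outer $\psi_{j,k}$ by $\tilde\psi_{j,k}=(I+R)^{-1}\psi_{j,k}$. To obtain the formula exactly as written you should instead argue directly via the sampling theorem. Since $\widehat{\psi_{j,k}}$ and $\widehat{\psi_{j,k}\ast f}$ are both supported in the dual box above, the Poisson summation/Shannon argument gives
\[
(\psi_{j,k}\ast(\psi_{j,k}\ast f))(x)
=\sum_{(\ell',\ell_n)}|Q_{\ell',\ell_n}|\,(\psi_{j,k}\ast f)(2^{-(j\wedge k)}\ell',2^{-(j\wedge 2k)}\ell_n)\,\psi_{j,k}(x'-2^{-(j\wedge k)}\ell',x_n-2^{-(j\wedge 2k)}\ell_n)
\]
\emph{exactly}, with no error term to absorb; this is the mechanism that makes the same $\psi_{j,k}$ appear on both sides. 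The two-parameter almost-orthogonality estimate you mention is then needed only to justify summability and the convergence in $\mathcal S_\infty$ and $\mathcal S'/\mathcal P$, not for any inversion step.
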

See \cite{fj} for the discrete Calder\'on identity with the classical isotropic dilations.
This discrete Calder\'on identity associated with the mixed isotropic and non-isotropic dilations  leads to the following discrete
Littlewood-Paley-Stein  square function.

\begin{defn} \label{def1.1}Let $f \in \mathcal{S}'/\mathcal{P}(\R^{n})$.  Then
    $g^d_{\rm com}(f),$ the discrete Littlewood-Paley-Stein  square
    function of $f,$ is defined by

    $$g^d_{\rm com}(f)(x',x_n)= \Big\{ \sum \limits_{j,k \in \Z}
    \sum _{(\ell',\ell_n ) \in \mathbb \Z^{n-1}\times \Z}
    |(\psi_{j,k}\ast f)(2^{- ( j \wedge k)} \ell', 2^{-( j \wedge
        2k)}\ell_n)|^2 \chi_I(x')
    \chi_J(x_n)\Big\}^{\frac{1}{2}},
    $$
    where $I$ are dyadic cubes in $\R^{n-1}$ and $J$ are dyadic
    intervals in $\R$ with the side length $\ell(I)=2^{-( j \wedge k)}$
    and $\ell(J)=2^{-( j \wedge 2k)},$ and the left lower corners of $I$
    and the left end points of $J$ are $2^{-( j \wedge k)}\ell'$ and
    $2^{-( j \wedge 2k)}\ell_n,$ respectively.
\end{defn}

Now we recall the Hardy spaces associated with two
different homogeneities in \cite{hllrs}.

\begin{defn} \label{def1.2}Let $0< p \leqslant 1$. $H^p_{\rm com}(\R^{n})=\{ f\in
    \mathcal{S}'/\mathcal{P}(\R^{n}): g^d_{\rm com}(f) \in
    L^p(\R^n) \}.$ If $f\in H^p_{\rm com}(\R^n)$, then the norm of $f$ is defined
    by $\|f \|_{H^p_{\rm com}(\R^n)}=\|g^d_{\rm com}(f)\|_{L^p(\R^n)}.$
\end{defn}

Note that, as mentioned, for the above Littlewood-Paley-Stein  square
function the multiparameter structures are involved again in the
discrete Calder\'on's identity and the Hardy spaces
$H^p_{\rm com}(\R^n).$

In \cite{hllrs} the boundedness of $T=T_1\circ T_2$ on
$H^p_{\rm com}(\R^n)$ is provided by the following

\begin{theorem} Let $T_1$ and $T_2$ be Calder\'on-Zygmund
    singular integral operators with the isotropic and non-isotropic
    homogeneities, respectively. Then for $0<p\leqslant 1$, the
    composition operator $T=T_1\circ T_2$ is bounded on $H^p_{\rm com}(\R^n)$.
\end{theorem}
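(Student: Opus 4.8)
The plan is to run the Littlewood--Paley machinery built around the discrete Calder\'on identity of Theorem~\ref{tm1.1}, with the isotropic homogeneity of $T_1$ and the non-isotropic homogeneity of $T_2$ entering through a factorization of the ``matrix coefficients'' of $T$ in the frame $\{\psi_{j,k}\}$. First, since $\widehat{T_1 f}=e(\xi)\widehat f$ and $\widehat{T_2 f}=h(\xi)\widehat f$ with $e,h$ bounded, $T=T_1\circ T_2$ is a Fourier multiplier with bounded symbol and hence bounded on $L^2(\R^n)$; this lets us apply Theorem~\ref{tm1.1} to $f$ in the dense subspace $L^2(\R^n)\cap H^p_{\rm com}(\R^n)$, carry $T$ through the (norm-convergent) expansion, and recover the general case by a routine density argument at the end. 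So fix such an $f$ and write, using Theorem~\ref{tm1.1},
\begin{eqnarray*}
(\psi_{j',k'}\ast Tf)(x)&=&\sum_{j,k\in\Z}\sum_{(\ell',\ell_n)\in\Z^{n-1}\times\Z}2^{-(n-1)(j\wedge k)}2^{-(j\wedge 2k)}(\psi_{j,k}\ast f)\bigl(2^{-(j\wedge k)}\ell',2^{-(j\wedge 2k)}\ell_n\bigr)\\
&&\times(\psi_{j',k'}\ast T\psi_{j,k})\bigl(x'-2^{-(j\wedge k)}\ell',x_n-2^{-(j\wedge 2k)}\ell_n\bigr).
\end{eqnarray*}

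The heart of the proof is an almost-orthogonality estimate for the kernel $\psi_{j',k'}\ast T\psi_{j,k}$. Since $T_1,T_2,\psi^{(1)}_j,\psi^{(2)}_k$ are all convolution operators, they commute and one has the factorization
$$\psi_{j',k'}\ast T\psi_{j,k}=\Phi_{j,j'}\ast\Psi_{k,k'},\qquad \Phi_{j,j'}:=\psi^{(1)}_{j'}\ast(T_1\psi^{(1)}_j),\quad \Psi_{k,k'}:=\psi^{(2)}_{k'}\ast(T_2\psi^{(2)}_k).$$
By $(\ref{1.1})$ the Fourier transform of $\Phi_{j,j'}$ is supported where $|\xi|_e\sim 2^j\sim 2^{j'}$ and is $C^\infty$ there (as $e$ is smooth away from the origin), so $\Phi_{j,j'}\equiv0$ unless $|j-j'|$ is bounded, $\Phi_{j,j'}$ has vanishing moments of all orders, and $|\Phi_{j,j'}(x)|\le C_N2^{jn}(1+2^j|x|_e)^{-N}$ for every $N$, uniformly in $j,j'$; similarly, by $(\ref{1.3})$, $\Psi_{k,k'}\equiv0$ unless $|k-k'|$ is bounded, $\Psi_{k,k'}$ has vanishing moments, and $|\Psi_{k,k'}(x',x_n)|\le C_N2^{k(n+1)}(1+2^k|x'|+2^{2k}|x_n|)^{-N}$. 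Convolving the isotropic bump at scale $2^{-j}$ against the parabolic bump at scale $2^{-k}$ and tracking how the two scales interact then yields, for $|j-j'|$ and $|k-k'|$ bounded and every $N$,
$$\bigl|(\psi_{j',k'}\ast T\psi_{j,k})(x',x_n)\bigr|\le C_N\,\frac{2^{(n-1)\min(j,k)}}{\bigl(1+2^{\min(j,k)}|x'|\bigr)^{N}}\cdot\frac{2^{\min(j,2k)}}{\bigl(1+2^{\min(j,2k)}|x_n|\bigr)^{N}};$$
that is, $\psi_{j',k'}\ast T\psi_{j,k}$ has the same size and localization as $\psi_{j,k}$ itself, essentially living on the rectangle of side lengths $2^{-\min(j,k)}\times2^{-\min(j,2k)}$ about the origin.

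I expect this convolution estimate to be the main obstacle. Because $T_1$ and $T_2$ are homogeneous with respect to two \emph{different} one-parameter dilation groups, the composite kernel does not factor as a tensor product, and the regime $k\le j\le 2k$ --- where the effective homogeneity is genuinely mixed --- has to be handled by extracting the relevant localization and cancellation from whichever of the two bumps carries it at the ambient scale. One also has to check that the normalizing constant $2^{-(n-1)(j\wedge k)}2^{-(j\wedge2k)}$ appearing in Theorem~\ref{tm1.1}, which is built from the \emph{finer} scale $2^{-(j\wedge k)}=2^{-\max(j,k)}$, balances against the \emph{coarser} localization scale $2^{-\min(j,k)}$ of $\psi_{j',k'}\ast T\psi_{j,k}$, so that the resulting lattice weights are $\ell^1$-normalized.

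Granting the estimate, the argument concludes as follows. Because $\psi_{j',k'}\ast T\psi_{j,k}\equiv0$ unless $(j,k)$ lies in a bounded neighborhood of $(j',k')$, inserting the kernel bound into the expansion above and dominating the resulting weighted lattice sums by strong maximal functions --- via the usual estimate that, when $N$ is large enough relative to $1/r$, one has $\sum_\ell|a_\ell|(1+\cdots)^{-N}\lesssim(M_s(|a|^{r}))^{1/r}$ --- gives, for a suitable $r$ with $0<r<p$ and $N$ taken large afterwards depending on $r$,
$$g^d_{\rm com}(Tf)(x)\le C\Bigl\{\sum_{j,k\in\Z}\bigl(M_s\bigl(|g^d_{\rm com}(f)_{j,k}|^{r}\bigr)(x)\bigr)^{2/r}\Bigr\}^{1/2},$$
where $M_s$ is the strong (product) Hardy--Littlewood maximal function adapted to the rectangles $I\times J$ of Definition~\ref{def1.1} and $g^d_{\rm com}(f)_{j,k}$ denotes the $(j,k)$-summand of $g^d_{\rm com}(f)$. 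Raising to the power $p$, writing the right-hand side as $\|(M_s(|g^d_{\rm com}(f)_{j,k}|^{r}))_{j,k}\|_{L^{p/r}(\ell^{2/r})}^{p/r}$, and applying the Fefferman--Stein vector-valued inequality for $M_s$ on $L^{p/r}(\ell^{2/r})$ --- valid since $p/r>1$ and $2/r>1$, and obtained by iterating the vector-valued maximal inequality in $x'$ and in $x_n$ --- we obtain
$$\bigl\|g^d_{\rm com}(Tf)\bigr\|_{L^p}^{p}\le C\bigl\|(|g^d_{\rm com}(f)_{j,k}|^{r})_{j,k}\bigr\|_{L^{p/r}(\ell^{2/r})}^{p/r}=C\bigl\|g^d_{\rm com}(f)\bigr\|_{L^p}^{p}=C\|f\|_{H^p_{\rm com}}^{p}.$$
A density argument then promotes this bound to all of $H^p_{\rm com}(\R^n)$, completing the proof.
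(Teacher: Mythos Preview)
The paper does not actually prove this theorem; it is quoted from \cite{hllrs} as background. That said, your approach is sound and lines up with the method the present paper uses later for Theorem~\ref{tm1.7}: expand via the discrete Calder\'on identity of Theorem~\ref{tm1.1}, establish an almost-orthogonality estimate for $\psi_{j',k'}\ast T\psi_{j,k}$, and close with the Fefferman--Stein vector-valued maximal inequality for the strong maximal function.

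Your argument differs from the paper's treatment of Theorem~\ref{tm1.7} (and, by extension, from what \cite{hllrs} presumably does) in one useful way: you exploit the compact Fourier supports in $(\ref{1.1})$--$(\ref{1.4})$ together with the fact that $T_1,T_2$ are convolution operators to factor $\psi_{j',k'}\ast T\psi_{j,k}=\Phi_{j,j'}\ast\Psi_{k,k'}$ and to get \emph{exact} orthogonality ($\Phi_{j,j'}\equiv0$ unless $|j-j'|\le C$, and similarly for $\Psi$). This collapses the double sum over $(j,k)$ to finitely many terms near $(j',k')$ and sidesteps the $2^{-|j-j'|\varepsilon}2^{-|k-k'|\varepsilon}$ bookkeeping that the paper carries. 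Your worry about the mismatch between the sampling scale $2^{-(j\wedge k)}\times2^{-(j\wedge 2k)}$ (recall $\wedge$ is $\max$ here) and the coarser localization scale $2^{-\min(j,k)}\times2^{-\min(j,2k)}$ of the kernel is well placed but resolves cleanly: the fine lattice sum is a Riemann approximation to the integral of a bump with unit mass, so the weights are indeed $\ell^1$-normalized, and the standard $r$-trick then dominates by $M_s$. The product-type decay you claim for $\Phi_{j,j'}\ast\Psi_{k,k'}$ follows after writing $(1+2^j|x|_e)^{-N}\lesssim(1+2^j|x'|)^{-N/2}(1+2^j|x_n|)^{-N/2}$ and similarly for the non-isotropic bump, then separating the convolution in $x'$ and $x_n$.

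One caveat: your argument uses that $e$ and $h$ are smooth away from the origin so that $T_i\psi^{(i)}$ remains Schwartz with the same Fourier support. That is exactly the hypothesis in the paper's introduction for this theorem, so you are on safe ground, but the same shortcut would not apply verbatim to the non-standard kernels of Theorem~\ref{tm1.7}, which is why the paper works with the softer almost-orthogonality of Lemma~\ref{lm2.6} there.
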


In \cite{hh}, the authors considered the the boundedness of $T=T_1\circ T_2$ on the Lipschitz space associated with different homogeneities. To recall these results, let us denote
$$
\Delta_uf(x)=f(x-u)-f(x).
$$
\begin{defn}\label{s1d1} Let $0<\alpha<1.$ The
    Lipschitz space associated with the isotropic homogeneity,
    ${\rm Lip}_{e}^{\alpha},$ is defined to be the space of all continuous functions $f$ defined on
    ${\R}^n$
    such that\\
    \begin{align}
    \begin{split}
    \|f\|_{{\rm Lip}^\alpha_{e}}
    :=\sup_{u\neq 0}\frac{\|\Delta_uf\|_\infty}{|u|_e^{\alpha}}<\infty;
    \end{split}
    \end{align}
    and the Lipschitz space associated with the non-isotropic homogeneity,
    ${\rm Lip}_{h}^{\alpha},$ is defined to be the space of all continuous functions $f$ defined on
    ${\R}^n$
    such that\\
    \begin{align}
    \begin{split}
    \|f\|_{{\rm Lip}^\alpha_{h}}
    :=\sup_{v\neq 0}\frac{\|\Delta_vf\|_\infty}{|v|_h^{\alpha}}<\infty.
    \end{split}
    \end{align}
\end{defn}

\begin{defn}\label{s1d2} Let $\alpha=(\alpha_1,\alpha_2)$ with $0<\alpha_1,\alpha_2<1$. The
    Lipschitz space associated with mixed homogeneities,
    ${\rm Lip}_{\rm com}^{\alpha},$ is defined to be the space of all continuous functions $f$ defined on
    ${\R}^n$
    such that\\
    \begin{align}
    \begin{split}
    \|f\|_{{\rm Lip}^\alpha_{\rm com}}
    :=\sup_{u,v\neq 0}\frac{\|\Delta_u\Delta_vf\|_\infty}{|u|_e^{\alpha_1}|v|_h^{\alpha_2}}<\infty.
    \end{split}
    \end{align}
\end{defn}

The boundedness of the singular integral operators
associated with the different homogeneities on the Lipschitz spaces is given by the following theorem in \cite{hh}.
\begin{theorem}\label{s1th2}
    Suppose that $T_1$ and $T_2$ are Calder\'on-Zygmund singular integral operator
    associated with the isotropic and non-isotropic homogeneities,
    respectively. Then the composition operator $T=T_1\circ T_2$ is bounded
    on ${\rm Lip}_{\rm com}^{\alpha}$ with $\alpha=(\alpha_1,\alpha_2)$, $0<\alpha_1,\alpha_2<1.$
\end{theorem}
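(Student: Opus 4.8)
The plan is to transfer the problem to the Littlewood--Paley functions attached to the product bumps $\psi_{j,k}=\psi^{(1)}_j\ast\psi^{(2)}_k$, and then to reduce the boundedness of $T$ to an almost orthogonality estimate between $\psi_{j',k'}$ and $T\psi_{j,k}$. The first step is the characterization
$$
\|f\|_{{\rm Lip}_{\rm com}^{\alpha}}\ \approx\ \sup_{j,k\in\Z}\,2^{j\alpha_1+k\alpha_2}\,\|\psi_{j,k}\ast f\|_{\infty}.
$$
For ``$\gtrsim$'', since $\widehat{\psi^{(1)}}$ and $\widehat{\psi^{(2)}}$ vanish near the origin, $\psi^{(1)}_j$ and $\psi^{(2)}_k$ have all vanishing moments; iterating the identity $(\varphi\ast g)(x)=\int\varphi(u)\,\Delta_ug(x)\,du$, valid when $\int\varphi=0$, gives
$$
\psi_{j,k}\ast f(x)=\iint \psi^{(1)}_j(u)\,\psi^{(2)}_k(v)\,\Delta_u\Delta_vf(x)\,du\,dv ,
$$
so that, after rescaling, $\|\psi_{j,k}\ast f\|_\infty\lesssim \|f\|_{{\rm Lip}_{\rm com}^{\alpha}}\,2^{-j\alpha_1-k\alpha_2}$ because $\int|\psi^{(1)}_j(u)|\,|u|_e^{\alpha_1}\,du\lesssim 2^{-j\alpha_1}$ and $\int|\psi^{(2)}_k(v)|\,|v|_h^{\alpha_2}\,dv\lesssim 2^{-k\alpha_2}$.

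For ``$\lesssim$'', one invokes the Calder\'on reproducing formula $g=\sum_{j,k}\widetilde\psi_{j,k}\ast\psi_{j,k}\ast g$ coming from $(\ref{1.2})$ and $(\ref{1.4})$ (the continuous analogue of Theorem~\ref{tm1.1}; here $\widetilde\psi_{j,k}=\widetilde\psi^{(1)}_j\ast\widetilde\psi^{(2)}_k$, where $\widetilde\psi^{(i)}$ differs from $\psi^{(i)}$ only by a reflection and hence shares its support and cancellation). Applying $\Delta_u\Delta_v$ and routing $\Delta_u$ onto the isotropically dilated factor $\widetilde\psi^{(1)}_j$ and $\Delta_v$ onto the non-isotropically dilated factor $\widetilde\psi^{(2)}_k$, and using $\|\Delta_u\widetilde\psi^{(1)}_j\|_1\lesssim\min(1,2^j|u|_e)$ and $\|\Delta_v\widetilde\psi^{(2)}_k\|_1\lesssim\min(1,2^k|v|_h)$ (the mean value theorem after rescaling, together with $|v'|\leqslant|v|_h$, $|v_n|\leqslant|v|_h^2$), one gets, with $A:=\sup_{j,k}2^{j\alpha_1+k\alpha_2}\|\psi_{j,k}\ast g\|_\infty$,
$$
|\Delta_u\Delta_v g(x)|\lesssim A\,\Big(\sum_{j\in\Z}\min(1,2^j|u|_e)\,2^{-j\alpha_1}\Big)\Big(\sum_{k\in\Z}\min(1,2^k|v|_h)\,2^{-k\alpha_2}\Big).
$$
Each factor is $\lesssim|u|_e^{\alpha_1}$, resp.\ $\lesssim|v|_h^{\alpha_2}$: the coarse part ($2^j|u|_e\leqslant1$) converges because $\alpha_1<1$, the fine part because $\alpha_1>0$, and likewise for $\alpha_2$. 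Thus $\|g\|_{{\rm Lip}_{\rm com}^{\alpha}}\lesssim A$. The real work is here: one must run the reproducing formula on the (at worst linearly growing, non-$L^2$) elements of ${\rm Lip}_{\rm com}^{\alpha}$ and read off everything modulo polynomials of degree $\leqslant1$, done as in \cite{hllrs}.

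Granting the characterization, fix $f\in{\rm Lip}_{\rm com}^{\alpha}$, so $\|\psi_{j,k}\ast f\|_\infty\lesssim\|f\|_{{\rm Lip}_{\rm com}^{\alpha}}2^{-j\alpha_1-k\alpha_2}$. Since $T=T_1\circ T_2$ is convolution with $K_1\ast K_2$ and convolutions commute, $T\widetilde\psi_{j,k}=(T_1\widetilde\psi^{(1)}_j)\ast(T_2\widetilde\psi^{(2)}_k)$; moreover, the multiplier of $T_i$ being homogeneous of degree $0$ for the $i$-th dilation and smooth away from the origin, $T_1\widetilde\psi^{(1)}_j=\varphi^{(1)}_j$ and $T_2\widetilde\psi^{(2)}_k=\varphi^{(2)}_k$ are the corresponding dilates of fixed Schwartz functions whose Fourier transforms are still supported in $\{\frac12\leqslant|\xi|_e\leqslant2\}$ and $\{\frac12\leqslant|\xi|_h\leqslant2\}$. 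Convolving the reproducing formula for $f$ with $\psi_{j',k'}$ therefore gives
$$
\psi_{j',k'}\ast Tf=\sum_{j,k\in\Z}\big(\psi^{(1)}_{j'}\ast\varphi^{(1)}_j\big)\ast\big(\psi^{(2)}_{k'}\ast\varphi^{(2)}_k\big)\ast(\psi_{j,k}\ast f),
$$
and the Fourier supports force $\psi^{(1)}_{j'}\ast\varphi^{(1)}_j=0$ for $|j-j'|\geqslant2$ and $\psi^{(2)}_{k'}\ast\varphi^{(2)}_k=0$ for $|k-k'|\geqslant2$, while Young's inequality bounds the surviving $L^1$ norms by an absolute constant (this also makes the series converge in $L^\infty$, consistently with the definition of $Tf$ in $\mathcal S'/\mathcal P$). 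Hence
$$
\|\psi_{j',k'}\ast Tf\|_\infty\lesssim\sum_{|j-j'|\leqslant1}\ \sum_{|k-k'|\leqslant1}\|\psi_{j,k}\ast f\|_\infty\lesssim\|f\|_{{\rm Lip}_{\rm com}^{\alpha}}\,2^{-j'\alpha_1-k'\alpha_2},
$$
so $\sup_{j',k'}2^{j'\alpha_1+k'\alpha_2}\|\psi_{j',k'}\ast Tf\|_\infty\lesssim\|f\|_{{\rm Lip}_{\rm com}^{\alpha}}$, and the ``$\lesssim$'' half of the characterization applied to $Tf$ gives $Tf\in{\rm Lip}_{\rm com}^{\alpha}$ with $\|Tf\|_{{\rm Lip}_{\rm com}^{\alpha}}\lesssim\|f\|_{{\rm Lip}_{\rm com}^{\alpha}}$. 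The main obstacle is the reverse half of the Littlewood--Paley characterization: recovering the pointwise double-difference bound from the sizes of the $\psi_{j,k}\ast f$ forces one to track the two independent scale thresholds coming from the two homogeneities and to control the reproducing formula on functions of linear growth modulo polynomials.
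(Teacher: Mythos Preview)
The paper does not actually prove Theorem~\ref{s1th2}; it is quoted from \cite{hh}. The closest in-paper analogue is the proof of Theorem~\ref{tm1.9} (the ${\rm Lip}_{\rm com}^\alpha$ part), which relies on the Littlewood--Paley characterization of Theorem~\ref{tm2.8} together with almost orthogonality estimates of the type $|\psi_{t's'}\ast T\psi_{ts}|\lesssim (t/t'\wedge t'/t)^\varepsilon(s/s'\wedge s'/s)^\varepsilon\cdot(\text{envelope})$, after which one integrates out the geometric decay in $|j-j'|,|k-k'|$.

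Your argument is correct and takes a genuinely sharper route for this particular theorem. You exploit a structural feature of $T_1,T_2$ that is \emph{not} available for the operators $T^e,T^h$ of Theorem~\ref{tm1.9}: since $T_i$ is a Fourier multiplier homogeneous of degree $0$ for the $i$th dilation, $T_1\widetilde\psi^{(1)}_j=\varphi^{(1)}_j$ and $T_2\widetilde\psi^{(2)}_k=\varphi^{(2)}_k$ are dilates of fixed Schwartz bumps with the \emph{same} annular Fourier support, so $\psi^{(1)}_{j'}\ast\varphi^{(1)}_j=0$ for $|j-j'|\geqslant 2$ and likewise in $k$. This replaces the quantitative decay $2^{-|j-j'|\varepsilon}2^{-|k-k'|\varepsilon}$ by exact orthogonality and collapses the double sum to at most nine terms. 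The paper's (and \cite{hh}'s) almost-orthogonality scheme is more robust---it only needs bounds of $T$ on test-function classes and therefore also covers the non-multiplier operators $T^e,T^h$---whereas your shortcut is specific to homogeneous multipliers but yields a cleaner proof with no $\varepsilon$'s to tune. Your sketch of the characterization (both directions) and your handling of the reproducing formula modulo polynomials match the content of Theorem~\ref{tm2.8}, and you correctly flag the one nontrivial technicality: justifying the reproducing formula on ${\rm Lip}_{\rm com}^\alpha$ in $\mathcal S'/\mathcal P$ rather than in $L^2$.
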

See \cite{hh} for more details concerning the case $0<\alpha_1, \alpha_2<\infty.$

In this paper, motivated by Phong-Stein's work in \cite{ps}, we study non-standard singular integral convolution operators
whose kernels are products of terms with different homogeneities. To be precise, we will consider convolution operators $Tf=K\ast f,$ where $K$ is a kernel which is assumed to have compact support, to be smooth away from the origin, and near the orgin
$$K(x)=E_k(x)H_{l}(x),$$
where $E_k$ is homogeneous of degree $-k$ in the isotropic sense and $H_{\l}$ is homogeneous of degree $-l$ in the non-isotropic sense.

We remark that if $E_k(x)$ is homogeneous of degree $-k$ in the isotropic sense near the origin then $\delta^{k}H_{k}(\delta_e x))=E_k(x)$ and
$$|E_k(x)|\leqslant C |x|_e^{-k}.$$
Similarly, if $H_{l}(x)$ is homogeneous of degree $-l$ in the non-isotropic sense near the origin then $\delta^{\l}H_{l}(\delta_h x)=H_{l}(x)$ and
$$|H_{l}(x)|\leqslant C |x|_h^{-l}.$$

In \cite{ps}, Phong and Stein proved the following
\begin{lemma}
    The function $|x|_{e}^{-k}|x|_h^{-l}$ is locally integrable if and only if $k+l<n+1$ and $k+\frac{1}{2}l<n.$
\end{lemma}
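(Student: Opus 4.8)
The plan is to decompose the region of integration into the unit ball (where local integrability is the issue, the kernel being smooth away from the origin) and estimate $\int_{|x|_e\le 1}|x|_e^{-k}|x|_h^{-l}\,dx$, showing this integral is finite precisely when $k+l<n+1$ and $k+\tfrac12 l<n$. The natural first move is to split $\R^n=\R^{n-1}\times\R$ and pass to a mixed set of coordinates adapted to the two homogeneities: write $x=(x',x_n)$ and estimate using the comparisons $|x|_e\approx |x'|+|x_n|$ and $|x|_h\approx |x'|+|x_n|^{1/2}$, so that the integrand is comparable to $(|x'|+|x_n|)^{-k}(|x'|+|x_n|^{1/2})^{-l}$. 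The bulk of the argument is then a two-variable calculation with $r=|x'|\in(0,1)$ (carrying the factor $r^{n-2}\,dr$ from polar coordinates in $\R^{n-1}$) and $s=|x_n|\in(0,1)$.

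**Next I would** carry out this two-dimensional integral by splitting the square $(0,1)^2$ into the two natural regimes, $\{s\le r^2\}$ and $\{s\ge r^2\}$, on which the sizes of the two "balls" simplify. On the region $s\le r^2$ one has $|x'|+|x_n|^{1/2}\approx r$ and $|x'|+|x_n|\approx r$, so the integrand is $\approx r^{-k-l}$ and, integrating $s$ from $0$ to $r^2$, one is left with $\int_0^1 r^{n-2+2-k-l}\,dr=\int_0^1 r^{n-k-l}\,dr$, which converges iff $n-k-l>-1$, i.e. $k+l<n+1$. On the region $s\ge r^2$ one has $|x_n|^{1/2}\ge |x'|$ so $|x|_h\approx s^{1/2}$, while $|x|_e\approx r+s$; changing variables $s=t^2$ (so $ds=2t\,dt$) converts the remaining integral to one of the form $\int\!\!\int_{0<r<t<1} r^{n-2}(r+t^2)^{-k}\, t^{-l}\, t\,dt\,dr$, and here splitting further according to whether $r\le t^2$ or $r\ge t^2$, or simply using $r+t^2\approx \max(r,t^2)$, produces the two borderline exponents; the binding constraint on this piece is $k+\tfrac12 l<n$. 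Combining, the full integral over the unit ball is finite iff both inequalities hold; conversely, reversing the estimates on each subregion (using $\approx$ rather than $\lesssim$) shows that if either inequality fails the integral over the corresponding subregion already diverges, giving the "only if" direction.

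**The main obstacle** I anticipate is purely bookkeeping: keeping the several subcases organized so that each of the two exponent conditions emerges cleanly from exactly one borderline region, and making sure the polar-coordinate Jacobian $r^{n-2}$ from $\R^{n-1}$ is tracked correctly (an off-by-one here would corrupt both thresholds). A secondary point worth stating carefully is why the behavior away from the origin is irrelevant: $K$ is compactly supported and smooth off the origin, so $|x|_e^{-k}|x|_h^{-l}$ is bounded on any annulus $\tfrac12\le |x|_e\le 1$, and local integrability of the kernel is equivalent to local integrability of $|x|_e^{-k}|x|_h^{-l}$ near $0$; this reduction should be noted before the computation begins. No deep tool is needed — the proof is elementary — so the emphasis should be on presenting the region decomposition transparently rather than on any single hard estimate.
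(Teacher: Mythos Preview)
The paper does not actually prove this lemma; it is quoted verbatim from Phong--Stein \cite{ps} and used without proof. So there is no ``paper's own proof'' to compare against.

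That said, your proposed argument is correct and is essentially the standard way to establish this fact. The decomposition into $\{s\le r^2\}$ and $\{s\ge r^2\}$ (with $r=|x'|$, $s=|x_n|$) is exactly the right one: on the first region both norms are $\approx r$ and you recover the condition $k+l<n+1$; on the second, after the substitution $s=t^2$ and the further split $r\lessgtr t^2$, the subregion $r\le t^2$ yields $\int_0^1 t^{2n-2k-l-1}\,dt$, giving $k+\tfrac12 l<n$, while the intermediate strip $t^2\le r\le t$ only reproduces one of the two conditions already found (which one depends on the sign of $n-1-k$) and so adds nothing new. Because the comparisons are two-sided ($\approx$, not merely $\lesssim$), each subregion also witnesses divergence when its associated inequality fails, so the ``only if'' direction follows as you say.

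Your remark about the Jacobian $r^{n-2}$ is on point and is the only place an off-by-one error is likely; the computation implicitly assumes $n\ge 2$, which is the standing convention of the paper.
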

As in \cite{ps}, this leads us to expect an interesting theory of non-standard singular integral convolution
operators on the boundary of local integrability. We
consider the natural restrictions: (1) $k+l=n+1$ and $l>2;$ (2) $k+\frac{1}{2}l=n$ and $l<2.$ Phong and Stein in \cite{ps} prove the following

\begin{theorem}\label{tm1.4}
    Suppose that $T(f)(x)=K\ast f(x)$ with $K(x)=E_k(x)H_{l}(x)$ for small $x$ and otherwise $K$
    is smooth, and has compact support. Assume also that either {\rm (1)}: $k+l=n+1, l>2$ and $E_k(x', 0)H_{l}(x',x_n)$ has mean value zero on the
    unit sphere $|x|_h=1;$ or {\rm (2)}: $k+\frac{l}{2}=n, l<2$ and $E_k(x', x_n)H_{l}(0,x_n)$ has mean value zero on
    the unit sphere $|x|_e=1$. Then, for $1<p<\infty$,
    $$  \|Tf\|_{p}\leqslant C \|f\|_{p}$$
    and $T$ is of weak-type (1,1).
\end{theorem}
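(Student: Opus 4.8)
The plan is to treat the two cases by a Fourier-analytic argument that reduces matters to known Calderón–Zygmund theory for the two separate homogeneities, using the multiplier decomposition dictated by the mixed homogeneous structure of $K$. First I would set up dyadic Littlewood–Paley pieces adapted to \emph{both} dilation groups: write $K = \sum_{j,k} K_{j,k}$ where $K_{j,k}$ lives at isotropic scale $2^{-j}$ and non-isotropic scale $2^{-k}$, obtained by inserting the resolution of identity built from $\psi^{(1)}$ and $\psi^{(2)}$ in $(\ref{1.1})$–$(\ref{1.4})$. The compact support and smoothness of $K$ away from the origin confine the essential interaction to the region $j,k \geq 0$ (with rapidly decaying tails otherwise), and on that region the product structure $K(x) = E_k(x)H_l(x)$ near the origin lets one estimate each $\widehat{K_{j,k}}$ together with its derivatives.

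The core step is the $L^2$ bound. In Case (1), where $k+l = n+1$, $l>2$, and $E_k(x',0)H_l(x',x_n)$ has mean value zero on $|x|_h = 1$: the homogeneity degrees combine so that $E_k(x)H_l(x)$ is, to leading order, $(-n-1)$-homogeneous in the \emph{non-isotropic} sense after we freeze $x' $ at $0$ in the "wrong" factor, and $n+1$ is precisely the homogeneous dimension for $\delta_h$ on $\R^{n-1}\times\R$. So one shows $\widehat{K}$ is a bounded function by comparing $K$ against a non-isotropic Calderón–Zygmund kernel: the mean-value-zero hypothesis on $|x|_h = 1$ supplies the cancellation needed to control the logarithmically divergent integral $\int_{|x|_h \leq 1} K$, exactly as the condition $\int_{S^{n-1}}\Omega\,d\omega = 0$ does classically. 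The correction terms $E_k(x)H_l(x) - E_k(x',0)H_l(x',x_n)$ and the far-from-origin smooth part of $K$ are absolutely integrable or better (using $l>2$ to make the error gain a positive power in the relevant direction), hence contribute bounded multipliers trivially. Symmetrically, in Case (2), where $k+\tfrac{l}{2}=n$, $l<2$, the roles reverse: $E_k(x)H_l(0,x_n)$ is $(-n)$-homogeneous in the \emph{isotropic} sense ($n$ being the isotropic homogeneous dimension), the mean-value-zero condition on $|x|_e = 1$ furnishes the cancellation, and $l<2$ guarantees the error terms are integrable. This gives $\|T_\varepsilon f\|_2 \leq C\|f\|_2$ uniformly in the truncation $\varepsilon$, hence $T$ bounded on $L^2$.

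Next I would verify that the truncated kernels $K_\varepsilon$ (and $K$ itself away from the diagonal) satisfy a \emph{mixed} Hörmander condition: a Calderón–Zygmund-type smoothness/size estimate with respect to whichever single metric is the "critical" one in each case — $|\cdot|_h$ in Case (1), $|\cdot|_e$ in Case (2) — since in each case the operator, after the leading-order analysis, behaves like a standard singular integral for that one homogeneity plus an integrable remainder. The point is that although $K$ genuinely mixes two homogeneities, the boundary-of-integrability scaling relation ($k+l=n+1$ or $k+\tfrac l2 = n$) forces one homogeneity to be "critical" (log-divergent, needing cancellation) and the other to be "subcritical" (the error terms it generates are absolutely integrable). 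With the $L^2$ bound and the single-parameter Hörmander condition in hand, the classical Calderón–Zygmund machinery recalled in the introduction — Calderón–Zygmund decomposition, weak-type $(1,1)$, Marcinkiewicz interpolation for $1<p\leq 2$, and duality for $2\leq p<\infty$ — yields $\|Tf\|_p \leq C\|f\|_p$ and the weak-type $(1,1)$ endpoint.

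The main obstacle, I expect, is the $L^2$ estimate: precisely isolating the leading non-isotropic (resp. isotropic) homogeneous piece of the product kernel and showing that replacing $E_k(x)H_l(x)$ by $E_k(x',0)H_l(x',x_n)$ (resp. by $E_k(x)H_l(0,x_n)$) costs only an absolutely integrable error — this is where the sign conditions $l>2$ and $l<2$ are used, and the bookkeeping of which power is gained in which variable is delicate. Equivalently, in frequency: showing $\widehat{K_{j,k}}$ summed over the two-parameter lattice produces a bounded function rather than a logarithmically growing one requires the cancellation from the mean-value-zero hypothesis to be deployed at exactly the right scale. Everything after the uniform $L^2$ bound is, as in the classical case, standard real-variable theory.
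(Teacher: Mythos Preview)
Your decomposition of $K$ into a ``leading'' homogeneous piece plus an absolutely integrable error is exactly right, and matches the paper: in Case~(1) one writes $K = (K-K_h') + K_h'$ with $K_h'(x)=\varphi(x)E_k(x',0)H_l(x',x_n)$, and the paper shows (via the mean-value estimate $|E_k(x',x_n)-E_k(x',0)|\lesssim |x'|^{-k-1}|x_n|$ interpolated against the trivial bound) that $K-K_h'\in L^1$, using $l>2$ precisely as you say. So far so good.

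The gap is in your treatment of the leading term. You assert that $K_h(x)=E_k(x',0)H_l(x)$ ``behaves like a standard singular integral for that one homogeneity,'' so that the $L^2$ bound and the H\"ormander condition follow from classical non-isotropic Calder\'on--Zygmund theory. This is not correct: $K_h$ is \emph{not} a standard non-isotropic CZ kernel. It is singular not only at the origin but along the entire axis $\{x'=0\}$, and in particular fails the pointwise size bound $|K_h(x)|\lesssim |x|_h^{-(n+1)}$ (take $|x'|\ll |x_n|^{1/2}$). Consequently $\widehat{K_h}\in L^\infty$ does not drop out of any off-the-shelf multiplier theorem, and the pointwise gradient/H\"ormander estimate $|K_h(x-y)-K_h(x-z)|\lesssim |y-z|_h\,|x-y|_h^{-(n+2)}$ is simply false. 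This is why Phong--Stein needed the Cotlar--Stein lemma for $L^2$, and why the present paper takes a different route entirely: it proves that $T_\varepsilon^h$ maps the test-function space $\mathbb M_h(\beta,\gamma,r,x_0)$ to itself with norm independent of $\varepsilon$ (here the axis singularity is tamed by pairing $|K_h|$ against the smoothness increment $|f(x-y)-f(x)|\lesssim |y|_h^\alpha$ of the test function, which renders $\int |y'|^{-k}|y|_h^{-l+\alpha}\,dy$ finite), and then transfers this to $L^2$ via the discrete Calder\'on reproducing formula and almost-orthogonality. For weak-type $(1,1)$ the paper verifies the \emph{integral} H\"ormander condition $\int_{|y-x_1|_h\geq 2|x_1-x_2|_h}|K_h(x_1-y)-K_h(x_2-y)|\,dy\leq C$ by a four-case analysis splitting the difference as $[E_k(x_1'-y',0)-E_k(x_2'-y',0)]H_l + E_k\,[H_l(x_1-y)-H_l(x_2-y)]$; each case needs separate handling of the regions $|x_1'-y'|\gtrless |x_1-x_2|_h$ precisely because no pointwise kernel regularity is available. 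Your outline skips this work, and the bi-parameter Littlewood--Paley decomposition you set up at the start plays no role in any of the three known arguments.
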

We remark that the key idea used in \cite{ps} is the Cotlar-Stein lemma for the $L^2(\R^n)$ boundedness. See \cite{ps} for more details.
The purpose of this paper is to establish the boundedness of $T$ on the Hardy spaces. We will apply another approach to show the above result and then establish the bounedess of $T$ on the Hardy spaces.
To this end, observe that if $|y|_h\geqslant \varepsilon$ for $\varepsilon>0,$
then $K(y)$ is in $L^1(\R^n)$ and thus we can define the truncated
operator associated to the non-isotropic metric by the following
$$T^h_\varepsilon=K_\varepsilon\ast f(x)=\int\limits_{|y|_h\geqslant \varepsilon} K(y)f(x-y)dy,$$
where for each $\varepsilon>0, T^h_\varepsilon(f)$ is well defined and bounded on $L^2(\R^n).$
Similarly, we can define the truncated
operator associated the isotropic metric by the following
$$T^e_\varepsilon=K_\varepsilon\ast f(x)=\int\limits_{|y|_e\geqslant \varepsilon} K(y)f(x-y)dy$$
and for each $\varepsilon>0, T^e_\varepsilon(f)$ is bounded on $L^2(\R^n).$

As in the classical case, we would like to show that $T^h_\varepsilon(f)$ and $T^e_\varepsilon(f)$ are bounded on $L^2(\R^n)$ uniformly for $\varepsilon>0.$ However, the classical method, that is,
the Fourier transform, does not work for such operators studied in this paper since the kernels of these
operators are given by the product of two kernels. The idea used in \cite{ps} is the Cotlar-Stein lemma which works for the $L^2$ boundedness only.
Our new idea is to prove the boundedness of $T^e_\varepsilon(f)$ and $T^e_\varepsilon(f)$ on the test function spaces uniformly for $\varepsilon>0.$
The test function spaces will be given by Definitions \ref{sm1} and \ref{sm2} in the next section. Then we obtain the following

\begin{theorem}\label{tm1.6}
    Suppose that $K(x)=E_k(x)H_{l}(x)$ for small $x$ and otherwise $K$
    is smooth and has compact support. Assume also that $k+l=n+1$ and
    $l>2.$ Then,
    if $K_h(x) =E_k(x', 0)H_{l}(x',x_n)$ has mean value zero on the unit sphere $|x|_h=1,$ we have
    \begin{eqnarray}\label{1.9}
    \|T^h_\varepsilon f\|_{2}\leqslant C \|f\|_{2}
    \end{eqnarray}
    where the constant $C$ is independent of $\varepsilon.$ And  $$T^h(f)(x)=\lim\limits_{\varepsilon\to 0^+}T^h_\varepsilon (f)(x)$$
    exists in $L^2(\R^n)$ such that
    $$\|T^h(f)\|_2\leqslant C \|f\|_2.$$
    Moreover, {\rm (1)} $T^h$ is bounded on $L^p(\R^n), 1<p<\infty;$ {\rm (2)} $T^h$ is of weak-type (1,1); {\rm (3)} $T^h$ is bounded from $H^1_h(\R^n)$ to $L^1(\R^n);$ {\rm (4)} $T^h$ is bounded from $L^\infty$ to $BMO_h.$

    Similarly, suppose that $K(x)=E_k(x)H_{l}(x)$ for small $x$ and otherwise $K$
    is smooth and has compact support. Assume also that $k+\frac{1}{2}l=n, l<2$ and $K_e(x) =E_k(x', x_n)H_{l}(0,x_n)$ has mean value zero on the unit sphere $|x|_e=1.$ Then
    \begin{eqnarray}\label{1.10}
    \|T^e_\varepsilon f\|_{2}\leqslant C \|f\|_{2}
    \end{eqnarray}
    where the constant $C$ is independent of $\varepsilon.$ And $$T^e(f)(x)=\lim\limits_{\varepsilon\to 0^+}T^e_\varepsilon (f)(x)$$
    exists in $L^2(\R^n)$ such that
    $$\|T^e(f)\|_2\leqslant C\|f\|_2.$$
    Moreover, {\rm (1)} $T^e$ is bounded on $L^p(\R^n), 1<p<\infty;$ {\rm (2)} $T^e$ is of weak-type (1,1); {\rm (3)} $T^e$ is bounded from $H^1(\R^n)$ to $L^1(\R^n);$ {\rm (4)} $T^e$ is bounded from $L^\infty$ to $BMO(\R^n).$
\end{theorem}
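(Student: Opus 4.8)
The plan is to reduce the uniform $L^2$ bound \eqref{1.9} to a statement about the action of $T^h_\varepsilon$ on a suitable class of test functions, rather than invoking the Fourier transform or the Cotlar--Stein lemma. First I would set up the test function space $\mathcal{M}$ (the one to be introduced in Definitions~\ref{sm1} and \ref{sm2}), whose members have a prescribed decay and smoothness adapted to the \emph{non-isotropic} metric $|\cdot|_h$ together with a cancellation condition; the point is that this space is dense in $L^2(\R^n)$ and its dual is a space on which duality arguments can be run. The core technical claim I would isolate is: there is a constant $C$, independent of $\varepsilon$, such that for every test function $\phi$ one has $\|T^h_\varepsilon\phi\|_{\mathcal{M}'} \leqslant C\|\phi\|_{\mathcal{M}'}$ (and a companion statement on $\mathcal{M}$ itself). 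Granting this, an almost-orthogonality/Calder\'on-reproducing-formula argument using the non-isotropic Littlewood--Paley decomposition $\psi^{(2)}_k$ from \eqref{1.3}--\eqref{1.4} upgrades the test-space estimate to the genuine $L^2$ bound with constant uniform in $\varepsilon$; the mean-value-zero hypothesis on $K_h(x)=E_k(x',0)H_l(x',x_n)$ on $|x|_h=1$ is exactly what guarantees the requisite cancellation of $K_\varepsilon$ against the bump functions $\psi^{(2)}_k$, so that $\|\psi^{(2)}_k \ast K_\varepsilon \ast \psi^{(2)}_{k'}\|$ decays geometrically in $|k-k'|$.

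Once \eqref{1.9} holds uniformly in $\varepsilon$, the existence of the limit $T^h(f)=\lim_{\varepsilon\to 0^+}T^h_\varepsilon(f)$ in $L^2$ follows by a standard Cauchy-in-$L^2$ argument: on the dense class of test functions the pointwise principal-value limit exists because $K$ is smooth away from the origin and $K_h$ has mean value zero (so the singular part is a genuine Calder\'on--Zygmund kernel with respect to the homogeneous structure $\delta_h$), and the uniform bound extends this to all of $L^2$. This also shows $T^h$ is a convolution operator whose kernel is a Calder\'on--Zygmund kernel relative to the non-isotropic homogeneity, i.e.\ it satisfies the size bound $|K(y)|\lesssim |y|_h^{-(n+1)}$ and the H\"older-type smoothness bound in the $|\cdot|_h$ metric away from the origin, together with the cancellation needed for $L^2$ boundedness.

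Conclusions (1)--(4) are then consequences of the standard Calder\'on--Zygmund machinery for the space of homogeneous type $(\R^n,|\cdot|_h,dx)$, which is a doubling metric measure space with homogeneous dimension $n+1$. Specifically: (1) interpolation between the weak-$(1,1)$ estimate and the $L^2$ bound gives $L^p$ for $1<p\leqslant 2$, and duality (the transpose kernel $K(-y)$ satisfies the same hypotheses, including the mean-value-zero condition) gives $2\leqslant p<\infty$; (2) the weak-$(1,1)$ bound follows from the Calder\'on--Zygmund decomposition adapted to $|\cdot|_h$-balls, using the smoothness of $K$ away from the origin to control the bad part; (3) boundedness from $H^1_h(\R^n)$ to $L^1$ follows by testing on $(1,\infty)$-atoms adapted to the non-isotropic structure and using the cancellation of the atom against the kernel smoothness; (4) $L^\infty\to \mathrm{BMO}_h$ is the dual statement of (3) via the $H^1_h$--$\mathrm{BMO}_h$ duality for this homogeneous structure. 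The isotropic statement (with $k+\frac12 l=n$, $l<2$, and $K_e(x)=E_k(x',x_n)H_l(0,x_n)$ having mean value zero on $|x|_e=1$) is proved identically, interchanging the roles of $|\cdot|_e$ and $|\cdot|_h$ and using the isotropic Littlewood--Paley functions $\psi^{(1)}_j$ and the classical Hardy space $H^1(\R^n)$ and $\mathrm{BMO}(\R^n)$.

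The main obstacle I anticipate is the uniform-in-$\varepsilon$ test-space estimate: one must show that truncating the kernel at $|y|_h=\varepsilon$ does not destroy the cancellation, i.e.\ that $K_\varepsilon = K\mathbf{1}_{\{|y|_h\geqslant\varepsilon\}}$ still pairs almost-orthogonally with the $\psi^{(2)}_k$ with constants independent of $\varepsilon$. Here the product structure $K=E_kH_l$ is a genuine complication, since $K$ is \emph{not} homogeneous under a single dilation group; the decay $|E_k(x)|\lesssim |x|_e^{-k}$ combined with $|H_l(x)|\lesssim|x|_h^{-l}$ and the constraint $k+l=n+1$, $l>2$ must be used carefully to see that the ``extra'' isotropic decay of $E_k$ away from the $x_n$-axis is harmless, while along the axis (where $|x|_e\sim|x|_h^2$) one recovers exactly the critical non-isotropic homogeneity $-(n+1)$ after the mean-value-zero cancellation is invoked. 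Controlling this borderline behaviour — and verifying that the truncation respects it uniformly — is the heart of the argument.
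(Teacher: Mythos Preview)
Your high-level strategy for the uniform $L^2$ bound --- prove a uniform estimate for $T^h_\varepsilon$ acting on a non-isotropic test-function class, then pass to $L^2$ via the discrete Calder\'on reproducing formula built from the $\psi^{(2)}_k$ and an almost-orthogonality argument --- is precisely what the paper does, and the Cauchy-in-$L^2$ argument for the existence of the limit is also the same.

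The genuine gap is in your treatment of (1)--(4). You assert that $K$ ``satisfies the size bound $|K(y)|\lesssim |y|_h^{-(n+1)}$ and the H\"older-type smoothness bound in the $|\cdot|_h$ metric away from the origin'' and then invoke the off-the-shelf Calder\'on--Zygmund package on $(\R^n,|\cdot|_h,dx)$. That pointwise size bound is \emph{false}. Near the $x_n$-axis one has $|y|_e\sim|y|_h^2$, so $|K(y)|\lesssim|y|_e^{-k}|y|_h^{-l}\sim|y|_h^{-2k-l}$, which for $k>0$ is strictly more singular than $|y|_h^{-(k+l)}=|y|_h^{-(n+1)}$; your claim that along the axis one ``recovers exactly the critical non-isotropic homogeneity $-(n+1)$'' is off by a factor $|y|_h^{-k}$. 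The same obstruction kills any pointwise H\"older bound on $K(x-y)-K(x-y_0)$; the paper explicitly notes this failure in the remark following its $H^1_h\to L^1$ argument. The kernel is genuinely non-standard and the black-box CZ theory does not apply.

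Two concrete ideas are missing from your outline. First, for the test-function estimate the paper writes $K=(K-K_h')+K_h'$ with $K_h'(x)=\varphi(x)E_k(x',0)H_l(x)$; the difference $K-K_h'$ lies in $L^1(\R^n)$ because the two bounds $|K-K_h'|\lesssim |x'|^{-k}|x|_h^{-l}$ and $|K-K_h'|=|E_k(x',x_n)-E_k(x',0)||H_l(x)|\lesssim |x'|^{-k-1}|x_n|\,|x|_h^{-l}$ interpolate to the locally integrable majorant $|x_n|^{1/2}|x'|^{-k-1/2}|x|_h^{-l}$. Only the purely $\delta_h$-homogeneous piece $K_h'$ then needs the mean-value-zero hypothesis. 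Second, for weak-$(1,1)$ the paper proves the \emph{integral} H\"ormander condition
\[
\int_{|y-x_1|_h\geqslant 2|x_1-x_2|_h}|K_0(x_1-y)-K_0(x_2-y)|\,dy\leqslant C
\]
directly, by splitting $K_0(x_1-y)-K_0(x_2-y)$ according to the product structure $E_k\cdot H_l$ and carrying out a four-case analysis in the $(y',y_n)$ variables. This is the substantive work; once the H\"ormander condition is in hand, weak-$(1,1)$, $L^p$, $H^1_h\to L^1$ (via $(1,2)$-atoms) and $L^\infty\to\mathrm{BMO}_h$ (via a direct Meyer-type truncation argument, not by duality) follow.
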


We prove this result in Section 2, right after Lemma \ref{lm2.3}.

We remark, as in the theory of classical Calder\'on-Zygmund singular integral convolution operators, the hypothesis
that $K_h(x) =E_k(x', 0)H_{l}(x',x_n)$ has mean value zero on the unit sphere $|x|_h=1$ and $K_e(x) =E_k(x', x_n)H_{l}(0,x_n)$ has mean value zero on the unit sphere $|x|_e=1$ are crucial for theory of non-standard singular integrals.

The main results in this paper are the boundedness of $T^e(f)$ and $T^h(f)$ on the Hardy spaces and the Lipschitz spaces. For this purpose we will first prove the Cotlar inequalities for $T^e(f)$ and $T^h(f)$ and the almost everywhere  convergence for $T_\varepsilon^e(f)$ and $T_\varepsilon^h(f),$ respectively. See Lemmas \ref{lm2.5} and \ref{lm2.6} in the next section.
The main results in this paper are Theorems 1.7 and 1.9.   They establish the boundedness of the operators
of mixed type on the Hardy spaces and the Lipschitz spaces.
\begin{theorem} \label{tm1.7}

    Suppose that $K(x)=E_k(x)H_{l}(x)$ for small $x$ and otherwise $K$
    is smooth and has compact support. Assume also that $k+l=n+1, l>2$ and $K_h(x) =E_k(x', 0)H_{l}(x',x_n)$
    has mean-value zero on the unit sphere $|x|_h=1$.  Then there exists a constant $C$ such that, for $\frac{n+1}{n+2}< p\leqslant 1$,
    $$\|T^h(f)\|_{H_h^p}\leqslant C\|f\|_{H_h^p}$$
    and
    $$\|T^h(f)\|^p_{H_{\rm com}}\leqslant C\|f\|_{H^p_{\rm com}}.$$
    Similarly, if $k+\frac{1}{2}l=n, l<2$ and $K_e(x)=E_k(x', x_n)H_{l}(0, x_n)$ has mean-value zero on the unit sphere $|x|_e=1$.
    Then there exists a constant $C$ such that,
    for $\frac{n}{n+1}< p\leqslant 1$,
    $$\|T^e(f)\|_{H^p}\leqslant C\|f\|_{H^p}$$
    and for $\frac{n+1}{n+2}<p\leqslant 1,$
    $$\|T^e(f)\|_{H^p_{\rm com}}\leqslant C\|f\|_{H^p_{\rm com}}.$$

\end{theorem}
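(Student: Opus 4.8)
\textbf{Proof proposal for Theorem \ref{tm1.7}.}

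The plan is to reduce the Hardy-space boundedness to the $L^2$ theory of Theorem \ref{tm1.6} via the discrete Calder\'on identity (Theorem \ref{tm1.1}) together with an almost-orthogonality estimate between the Littlewood--Paley pieces $\psi_{j,k}$ and the kernel $K$. Concretely, for $f$ in a suitable dense subspace, write $f = \sum_{j,k} \sum_{\ell',\ell_n} \lambda_{j,k,\ell',\ell_n}\, \psi_{j,k}(\cdot - 2^{-(j\wedge k)}\ell', \cdot - 2^{-(j\wedge 2k)}\ell_n)$ with the discrete coefficients $\lambda$ from Theorem \ref{tm1.1}; apply $T^h$ (resp. $T^e$) term by term using its $L^2$-boundedness and continuity; then apply the square function $g^d_{\rm com}$ (or $g_h$, $g$) and estimate $\psi_{j',k'} \ast T^h \psi_{j,k}$. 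The heart of the matter is the kernel estimate: since $T^h$ is a convolution operator with kernel $K$ which is smooth away from $0$, compactly supported, and equals $E_k H_l$ near the origin with the non-isotropic cancellation $\int_{|x|_h=1} K_h\, d\omega = 0$, one shows that $\psi_{j',k'} \ast K \ast \psi_{j,k}$ enjoys the almost-orthogonality decay
\begin{align*}
\big| \psi_{j',k'} \ast K \ast \psi_{j,k}(x) \big| \lesssim 2^{-|j-j'|\e}\, 2^{-|k-k'|\e}\, \frac{2^{-(j\wedge j')(n-1)}\,2^{-(k\wedge 2k')}}{\big(1 + 2^{j\wedge j'}|x'| + 2^{k\wedge 2k'}|x_n|\big)^{M}}
\end{align*}
for suitable $\e>0$ and large $M$; this is where the mean-value-zero hypothesis and the homogeneity $k+l=n+1$ (resp. $k+\frac12 l = n$) are used, exactly as the cancellation of $\Omega$ is used in the classical Calder\'on--Zygmund theory.

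I would organize the argument in the following steps. First, establish the kernel estimate above: split $K = K_0 + K_\infty$ where $K_0$ is the part near the origin (homogeneous product) and $K_\infty$ is smooth with compact support away from $0$; the $K_\infty$ piece is harmless by repeated integration by parts, while for $K_0$ one subtracts the mean value on the relevant sphere and uses the standard size/smoothness/cancellation bookkeeping — this is precisely the content underlying Theorem \ref{tm1.6}, so much of it is already available. Second, use the discrete Calder\'on identity to transfer to sequence spaces: by the almost-orthogonality estimate and Schur's test (summing the geometric factors $2^{-|j-j'|\e}2^{-|k-k'|\e}$ over $(j',k')$ and the spatial kernel over the dyadic rectangles), one obtains pointwise domination of $g^d_{\rm com}(T^h f)$ by a "shifted maximal" average of $g^d_{\rm com}(f)$ against the Fefferman--Stein vector-valued maximal operator adapted to the mixed dilation structure. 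Third, invoke the vector-valued maximal inequality on $L^p$ for the relevant range of $p$ (for $p \le 1$ one needs enough smoothness/cancellation moments on $\psi^{(1)},\psi^{(2)}$, which dictates the thresholds $\frac{n+1}{n+2}$ and $\frac{n}{n+1}$), yielding $\|g^d_{\rm com}(T^h f)\|_{L^p} \lesssim \|g^d_{\rm com}(f)\|_{L^p}$, i.e. the $H^p_{\rm com}$ bound; the one-parameter bounds $\|T^h f\|_{H^p_h}\lesssim \|f\|_{H^p_h}$ and $\|T^e f\|_{H^p}\lesssim \|f\|_{H^p}$ follow from the same scheme with the single-scale square functions $g_h$ and $g$, where only one dilation is active and the other kernel factor is controlled by its size bound together with the cancellation on the appropriate sphere.

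The main obstacle I anticipate is proving the almost-orthogonality kernel estimate for the \emph{product} kernel across the \emph{mismatched} homogeneities: when one convolves an isotropically-dilated bump $\psi^{(1)}_{j'}$ against the non-isotropic piece $H_l$ (or vice versa), the scaling does not align, so the usual "Taylor-expand and use cancellation" argument must be run with respect to whichever metric is coarser at the relevant scale, and one must check that the loss is still summable. The indices $j\wedge k$ versus $j\wedge 2k$ appearing in Theorem \ref{tm1.1} reflect exactly this mismatch, and keeping track of them carefully through Schur's test is the delicate bookkeeping step. A secondary point is justifying the term-by-term application of $T^h$ to the Calder\'on expansion and the convergence of the resulting series in $\mathcal S'/\mathcal P$ and in $H^p_{\rm com}$; this is handled by first proving everything for $f$ in the test-function space of Definitions \ref{sm1}--\ref{sm2} (where convergence is in that space) and then passing to the limit using density and the a priori estimate. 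The threshold on $p$ will come out of requiring that $\psi^{(1)},\psi^{(2)}$ have sufficiently many vanishing moments so that the kernel decay exponent $M$ exceeds $\frac{(n-1)+1}{p}$ in the isotropic total dimension $n$ and $\frac{(n-1)+2}{p}$ in the non-isotropic homogeneous dimension $n+1$, which is consistent with the stated ranges.
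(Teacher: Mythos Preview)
Your overall architecture matches the paper's: reduce to $f\in L^2\cap H^p$ (resp.\ $L^2\cap H^p_{\rm com}$), expand via the discrete Calder\'on identity, prove an almost-orthogonality estimate for $\psi_{j',k'}\ast T\ast\psi_{j,k}$, and close with a Schur/Fefferman--Stein vector-valued maximal argument (Lemma~\ref{lm2.7} and its two-parameter analogue from \cite{hllrs}). So the scheme is right.

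Where you diverge is in how you obtain the almost-orthogonality estimate, and this is worth noting because the paper's route completely sidesteps what you flag as the ``main obstacle.'' You propose to attack $\psi_{j',k'}\ast K\ast\psi_{j,k}$ directly by splitting $K=K_0+K_\infty$ and doing size/smoothness/cancellation bookkeeping across the two mismatched dilations. The paper instead exploits two observations: first, since $\psi_{j,k}=\psi^{(1)}_j\ast\psi^{(2)}_k$ and $T$ is convolution, one has the clean factorization
\[
\psi_{j',k'}\ast T\ast\psi_{j,k}=\bigl(\psi^{(1)}_{j'}\ast T\ast\psi^{(1)}_{j}\bigr)\ast\bigl(\psi^{(2)}_{k'}\ast\psi^{(2)}_{k}\bigr),
\]
so the two homogeneities decouple; second, for the factor carrying $T$, rather than redoing any kernel analysis the paper simply invokes the boundedness of $T^e$ (resp.\ $T^h$) on the test-function space $\mathbb M_e$ (resp.\ $\mathbb M_h$) established in the proof of Theorem~\ref{tm1.6}. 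Since $\psi^{(1)}_{j'}\ast\psi^{(1)}_{j}\in\mathbb M_e(1,1,2^{-(j\wedge j')},0)$ with norm $\lesssim 2^{-|j-j'|\varepsilon}$ by classical almost orthogonality, applying $T^e$ immediately yields the required size bound (Lemma~\ref{lm2.6}); the other factor $\psi^{(2)}_{k'}\ast\psi^{(2)}_{k}$ is handled by the standard one-parameter estimate. What this buys is that the ``mismatched homogeneity'' difficulty never arises: the operator only ever hits test functions adapted to its \emph{own} dilation, and the cross-dilation interaction is pushed into a routine convolution of two bumps. Your direct approach would presumably also work, but with considerably more labor; the paper's factorization-plus-test-function-boundedness device is the efficient mechanism here.
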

This theorem is proved later in Section 2.

As a direct consequence of Theorem \ref{tm1.7} we have
\begin{corollary}\label{cor1.8}

    Suppose that $K(x)=E_k(x)H_{l}(x)$ for small $x$ and otherwise $K$
    is smooth and has compact support. Assume also that $k+l=n+1, l>2$ and $K_h(x) =E_k(x', 0)H_{l}(x',x_n)$
    has mean-value zero on the unit sphere $|x|_h=1.$ Then $T^h$ is bounded on $BMO_h(\R^n)$ and from $H_h^p(\R^n)$ to $L^p(\R^n)$ for $\frac{n+1}{n+2}< p\leqslant 1$.

    Similarly, if $k+\frac{1}{2}l=n, l<2$ and $K_e(x)=E(x', x_n)H_{\l}(0, x_n)$ has mean-value zero on the unit sphere $|x|_e=1,$ then $T^e$ is bounded on $BMO(\R^n)$ and from $H^p(\R^n)$ to $L^p(\R^n)$ for $\frac{n}{n+1}< p\leqslant 1$.
\end{corollary}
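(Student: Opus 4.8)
The plan is to deduce both assertions directly from Theorem \ref{tm1.7} together with the $H^p$--$L^p$ mapping properties that are built into the constructions, using duality for the $BMO$ statements. First consider the non-isotropic case, so assume $k+l=n+1$, $l>2$, and $K_h$ has mean value zero on $|x|_h=1$. Theorem \ref{tm1.7} gives $\|T^h(f)\|_{H^p_h}\le C\|f\|_{H^p_h}$ for $\frac{n+1}{n+2}<p\le 1$. Since the identity embedding $H^p_h(\R^n)\hookrightarrow L^p(\R^n)$ holds (the square-function norm $\|g_h(f)\|_{L^p}$ dominates an $L^p$-type maximal function controlling $\|f\|_{L^p}$ for $p\le 1$, exactly as in the classical Fefferman--Stein theory for the non-isotropic dilation structure of \cite{fr1,fr2}), composing this embedding with the $H^p_h$-boundedness immediately yields $\|T^h(f)\|_{L^p}\le C\|T^h(f)\|_{H^p_h}\le C\|f\|_{H^p_h}$, which is the claimed boundedness from $H^p_h(\R^n)$ to $L^p(\R^n)$ for $\frac{n+1}{n+2}<p\le 1$.

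For the $BMO_h$ statement I would argue by duality. The dual of the non-isotropic Hardy space $H^1_h(\R^n)$ is $BMO_h(\R^n)$, and $T^h$ is a convolution operator, hence its formal transpose is convolution against $\widetilde K(x)=K(-x)$, which is again a kernel of the same non-standard type: it is smooth away from the origin, compactly supported, and near the origin equals $E_k(-x)H_l(-x)$ with $E_k(-\cdot)$ still isotropically homogeneous of degree $-k$, $H_l(-\cdot)$ still non-isotropically homogeneous of degree $-l$, and the reflected mean-value-zero condition on $|x|_h=1$ is preserved. Thus $(T^h)^{*}$ satisfies the hypotheses of Theorem \ref{tm1.6}(3), so $(T^h)^{*}\colon H^1_h(\R^n)\to L^1(\R^n)$ is bounded. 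Taking adjoints and using $(H^1_h)^{*}=BMO_h$ and $(L^1)^{*}=L^\infty$ gives $T^h\colon L^\infty(\R^n)\to BMO_h(\R^n)$; one then upgrades this to boundedness on $BMO_h(\R^n)$ itself by interpolating against the $L^p$ bounds of Theorem \ref{tm1.6}(1) (or equivalently by noting that the same adjoint argument, applied with $(H^p_h)^{*}$ in place of $(H^1_h)^{*}$ and using the $H^p_h\to L^p$ bound just established for $(T^h)^{*}$, produces the Campanato/$BMO_h$ estimate on the nose). The isotropic case is identical: using Theorem \ref{tm1.7} with $k+\tfrac12 l=n$, $l<2$, and $K_e$ mean-value zero on $|x|_e=1$, the embedding $H^p(\R^n)\hookrightarrow L^p(\R^n)$ of \cite{fs} gives the $H^p\to L^p$ bound for $\frac{n}{n+1}<p\le 1$, and duality against $(H^1)^{*}=BMO(\R^n)$ together with Theorem \ref{tm1.6}(3) for the reflected kernel $\widetilde K$ yields $T^e\colon L^\infty\to BMO$ and hence boundedness on $BMO(\R^n)$.

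The only genuinely substantive point is checking that the transposed kernel $\widetilde K(x)=K(-x)$ really does satisfy all the hypotheses of Theorems \ref{tm1.6} and \ref{tm1.7} — smoothness off the origin and compact support are obvious, but one must verify that reflection $x\mapsto -x$ commutes with both dilation structures (it does, since $\delta_e$ and $\delta_h$ act coordinatewise by positive scalars) and that the restriction appearing in the mean-value-zero condition, e.g. $E_k(x',0)H_l(x',x_n)$, transforms correctly under the reflection so that the cancellation on the sphere $|x|_h=1$ is inherited; this is a short direct computation. I expect no other obstacle: once the kernel class is seen to be closed under transposition, the corollary is a formal consequence of Theorems \ref{tm1.6} and \ref{tm1.7} and the standard Hardy-space/$BMO$ duality for the two dilation structures.
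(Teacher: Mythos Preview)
Your $H^p\to L^p$ argument is correct and is exactly what the paper does: compose the embedding $H^p\hookrightarrow L^p$ (via the maximal-function characterization) with the $H^p$-boundedness from Theorem~\ref{tm1.7}.

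For the $BMO$ statement you have the right idea --- the kernel class is closed under $x\mapsto -x$, so the adjoint is an operator of the same type --- but you then invoke the wrong theorem for the adjoint. You use Theorem~\ref{tm1.6}(3) to get $(T^h)^*\colon H^1_h\to L^1$, which by duality only yields $T^h\colon L^\infty\to BMO_h$, and your proposed ``upgrade'' from $L^\infty\to BMO_h$ to $BMO_h\to BMO_h$ is not valid as written: interpolating $L^p\to L^p$ against $L^\infty\to BMO_h$ does not produce the endpoint $BMO_h\to BMO_h$, and your parenthetical alternative (dualizing $H^p\to L^p$ of the adjoint) gives $T^h\colon L^{p'}\to (H^p_h)^*$, which is a Lipschitz/Campanato space, not $BMO_h\to BMO_h$.

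The paper's fix is a one-line change: apply Theorem~\ref{tm1.7} (not Theorem~\ref{tm1.6}(3)) to the adjoint. Since $\widetilde K$ satisfies the same hypotheses, Theorem~\ref{tm1.7} gives $(T^h)^*\colon H^1_h\to H^1_h$. Dualizing this with $(H^1_h)^*=BMO_h$ yields $T^h\colon BMO_h\to BMO_h$ directly, with no upgrade step needed. The isotropic case is identical.
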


\begin{theorem}\label{tm1.9}

    Suppose that $K(x)=E_k(x)H_{l}(x)$ for small $x$ and otherwise $K$
    is smooth and has compact support. Assume also that $k+l=n+1, l>2$ and $K_h(x) =E_k(x', 0)H_{l}(x',x_n)$ has mean-value zero on the unit sphere $|x|_h=1$
    then there exists a  constant $C$ such that, for $ 0<\alpha<1$,
    $$\|T^h(f)\|_{L_h^\alpha}\leqslant C\|f\|_{L_h^\alpha}$$
    and for $\alpha=(\alpha_1, \alpha_2)$ with $0<\alpha_1, \alpha_2<1,$
    $$\|T^h(f)\|_{L_{\rm com}^{\alpha}}\leqslant C\|f\|_{L_{\rm com}^\alpha}.$$

    Similarly, if $k+\frac{1}{2}l=n, l<2$ and  $K_e(x)=E_k(x', x_n)H_{l}(0, x_n)$ has mean-value zero on the unit sphere $|x|_e=1,$ then there exits  constant $C$ such that for $ 0<\alpha<1$
    $$\|T^e(f)\|_{L_e^\alpha}\leqslant C\|f\|_{L_e^\alpha}$$
        and for $\alpha=(\alpha_1, \alpha_2)$  with $0<\alpha_1, \alpha_2<1,$
    $$\|T^e(f)\|_{L_{\rm com}^{\alpha}}\leqslant C\|f\|_{L_{\rm com}^\alpha}.$$

\end{theorem}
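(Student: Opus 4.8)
\textbf{Proof plan for Theorem \ref{tm1.9}.}

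The plan is to deduce the Lipschitz estimates from the Calder\'on reproducing formula together with the almost-orthogonality estimates on the test-function spaces that were established for $T^h$ and $T^e$ in Section 2 (in particular the Cotlar inequalities of Lemmas \ref{lm2.5} and \ref{lm2.6} and the uniform $L^2$ bounds of Theorem \ref{tm1.6}). By symmetry it suffices to treat $T^h$; the argument for $T^e$ is identical after interchanging the roles of the isotropic and non-isotropic dilations. I would also treat only the one-parameter statement $\|T^h(f)\|_{L_h^\alpha}\leqslant C\|f\|_{L_h^\alpha}$ in detail, since the mixed-homogeneity statement $\|T^h(f)\|_{L_{\rm com}^\alpha}\leqslant C\|f\|_{L_{\rm com}^\alpha}$ follows by running the same estimates in each of the two parameters separately using the two-parameter reproducing formula.

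First I would set up the Littlewood--Paley characterization of ${\rm Lip}_h^\alpha$: a continuous function $f$ (modulo constants) lies in ${\rm Lip}_h^\alpha$ if and only if $\sup_{k\in\Z} 2^{k\alpha}\|\psi^{(2)}_k\ast f\|_\infty<\infty$, with comparable norms, where $\psi^{(2)}$ is as in $(\ref{1.3})$--$(\ref{1.4})$. Granting this, write $T^h(f)=\sum_{k\in\Z}\psi^{(2)}_k\ast\psi^{(2)}_k\ast T^h(f)$ via the non-isotropic Calder\'on identity, and the task reduces to bounding $2^{k\alpha}\|\psi^{(2)}_k\ast T^h(f)\|_\infty$ uniformly in $k$. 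Because $T^h$ is a convolution operator, $\psi^{(2)}_k\ast T^h(f)=T^h(\psi^{(2)}_k\ast f)$, so I need the kernel estimate: $\|\psi^{(2)}_k\ast T^h(g)\|_\infty\leqslant C\sum_{k'} 2^{-|k-k'|\sigma}\|\psi^{(2)}_{k'}\ast g\|_\infty$ for some $\sigma>\alpha$, which is precisely an almost-orthogonality estimate between $\psi^{(2)}_k\ast K_h$ and $\psi^{(2)}_{k'}$ in the non-isotropic metric. This estimate comes from pairing the size and smoothness/cancellation of $\psi^{(2)}$ against the size, smoothness away from the origin, and mean-value-zero property of $K_h$ on $|x|_h=1$; the compact support and smoothness of $K$ away from $0$ handle the large scales, while homogeneity of degree $-l$ with $l$ at the boundary of local integrability together with the cancellation condition handles the small scales — this is exactly the content that was extracted in proving $(\ref{1.9})$ and the Cotlar inequality in Lemma \ref{lm2.5}. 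Summing the geometric series $\sum_{k'}2^{-|k-k'|\sigma}2^{-k'\alpha}\lesssim 2^{-k\alpha}$ since $\sigma>\alpha$ gives the bound, and then the converse half of the Littlewood--Paley characterization yields $\|T^h(f)\|_{L_h^\alpha}\lesssim\|f\|_{L_h^\alpha}$.

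For the mixed-homogeneity estimate I would use the two-parameter discrete Calder\'on identity of Theorem \ref{tm1.1} and the corresponding characterization of ${\rm Lip}_{\rm com}^\alpha$ by $\sup_{j,k}2^{j\alpha_1}2^{k\alpha_2}\|\psi_{j,k}\ast f\|_\infty$. Since $T^h$ commutes with the convolutions $\psi^{(1)}_j\ast(\cdot)$ and $\psi^{(2)}_k\ast(\cdot)$, one reduces to a double almost-orthogonality estimate $\|\psi_{j,k}\ast T^h(g)\|_\infty\leqslant C\sum_{j',k'}2^{-|j-j'|\sigma_1}2^{-|k-k'|\sigma_2}\|\psi_{j',k'}\ast g\|_\infty$ with $\sigma_1>\alpha_1$, $\sigma_2>\alpha_2$; the estimate in the $k$-parameter is the non-isotropic one just described, and in the $j$-parameter the factor $E_k(x',0)$ in $K_h$ plays no role after convolving in $x'$ against $\psi^{(1)}$, so one gets decay from smoothness of the kernel at scales away from $0$ combined with the fact that $\psi^{(1)}_j$ has all moments vanishing. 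Summing the double geometric series against $2^{-j'\alpha_1}2^{-k'\alpha_2}$ closes the argument. The main obstacle is the small-scale almost-orthogonality estimate in the critical (boundary) homogeneity: because $l>2$ puts $|x|_e^{-k}|x|_h^{-l}$ exactly at the edge of local integrability, one cannot afford any loss, and the cancellation of $K_h$ on $|x|_h=1$ must be used carefully — together with a second cancellation coming from the moment conditions on $\psi^{(2)}$ — to produce the extra decay factor $2^{-|k-k'|\sigma}$ with $\sigma$ strictly larger than $1$ (hence larger than any admissible $\alpha$). This is the technical heart, and it parallels the delicate truncated-kernel analysis already carried out in the proof of Theorem \ref{tm1.6}.
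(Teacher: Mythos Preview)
Your plan is essentially the paper's proof: Littlewood--Paley characterization of the Lipschitz spaces (Theorem \ref{tm2.8}), Calder\'on reproducing formula, the almost-orthogonality estimate of Lemma \ref{lm2.6}, and summing the resulting geometric series; the paper carries this out with a continuous parameter $t$ rather than your dyadic $k$, and treats $T^e$ first rather than $T^h$, but these are cosmetic. Two small corrections: you only need $\sigma>\alpha$ (not $\sigma>1$), and Lemma \ref{lm2.6} already provides any $0<\varepsilon<1$, so there is no delicate endpoint issue at this step---the hard work was absorbed into the test-function bounds feeding Lemma \ref{lm2.6}; also, the paper closes by a density/extension argument (the $\{f_n\}$ part of Theorem \ref{tm2.8}) to pass from $L^2\cap{\rm Lip}^\alpha$ to all of ${\rm Lip}^\alpha$, which you should include.
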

This theorem is proved later in Section 2.

It is worthwhile to point out that the ideas in the proofs of Theorems \ref{tm1.7} and \ref{tm1.9}
are the boundedness of $T^e(f)$ and $T^h(f)$ on the test function spaces. See Lemmas \ref{lm2.3} and \ref{lm2.6} in the next section.

\section{proofs of main results}

Let's begin with the following definitions for the test functions.

\begin{definition}\label{sm1}
    A function $f(x)$ is said to be a test function with the isotropic  homogeneity for $0<\beta\leqslant 1, \gamma>0, r>0$ and a fixed $x_0\in \R^n,$ if $f(x)$ satisfies the following conditions:
    \begin{equation}\label{sm2.1}
    |f(x)|\leqslant C \frac{r^{\gamma}}{(r+|x-x_0|_e)^{n+\gamma}};
    \end{equation}
    \begin{equation}\label{sm2.2}
    \begin{aligned}
    |f(x_1)-f(x_2)|\leqslant C  \Big(\frac{|x_1-x_2|_e}{r+|x_1-x_0|_e}\Big)^\beta
    \frac{r^\gamma}{(r+|x_1-x_0|_e)^{n+\gamma}},\\
    {\rm for }\, |x_1-x_2|_e\leqslant \frac{1}{2}(r+|x_1-x_0|_e);
    \end{aligned}
    \end{equation}
    \begin{equation}\label{sm2.3}
    \int\limits_{\R^n} f(x) dx=0.
    \end{equation}
    If $f(x)$ is a test function with the isotropic homogeneity, we denote $f\in \mathbb M_e(\beta, \gamma, r, x_0)$ and define the norm of $f$ by
    $\|f\|_{\mathbb M_e(\beta, \gamma, r, x_0)}:=\inf\{C: (\ref{sm2.1})-(\ref{sm2.2})\ {\rm hold}\}.$
\end{definition}
We remark that in order to show the $Tb$ theorem on the Besov and Triebel-Lizorkin spaces, the test functions was introduced in \cite{h1}.
 See also \cite {hs} for more details on spaces of homogeneous type in the sense of Coifman and Weiss.

\begin{definition}\label{sm2}
    A function $f(x)$ is said to be a test function with the non-isotropic  homogeneity for $0<\beta\leqslant 1, \gamma>0, r>0$ and a fixed $x_0\in \R^n,$ if $f(x)$ satisfies the following conditions:
    \begin{equation}\label{sm2.4}
    |f(x)|\leqslant C \frac{r^{\gamma}}{(r+|x-x_0|_h)^{n+1+\gamma}};
    \end{equation}
    \begin{equation}\label{sm2.5}
    \begin{aligned}
    |f(x_1)-f(x_2)|\leqslant C  \Big(\frac{|x_1-x_2|_h}{r+|x_1-x_0|_h}\Big)^\beta
    \frac{r^\gamma}{(r+|x_1-x_0|_h)^{n+1+\gamma}},\\
    {\rm for }\, |x_1-x_2|_h\leqslant \frac{1}{2}(r+|x_1-x_0|_h);
    \end{aligned}
    \end{equation}
    \begin{equation}\label{sm2.6}
    \int\limits_{\R^n} f(x) dx=0.
    \end{equation}
    If $f(x)$ is a test function with the non-isotropic homogeneity, we denote $f\in \mathbb M_h(\beta, \gamma, r, x_0)$ and define the norm of $f$ by
    $\|f\|_{\mathbb M_h(\beta, \gamma, r, x_0)}:=\inf\{C: (\ref{sm2.4})-(\ref{sm2.5})\ {\rm hold}\}.$
\end{definition}

The main tools of this paper are the boundedness of operators $T^e$ and $T^h$ on these test function spaces. We begin with the following

\begin{lemma} \label{lm2.3}
    Suppose $K \in L^1(\R^n)$ with compact support, then $Tf(x)=K\ast f(x)$ is bounded
    on $\M_e(\beta, \gamma, r, x_0)$ and $\M_h(\beta, \gamma, r, x_0),$ respectively.
\end{lemma}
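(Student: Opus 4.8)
The plan is to verify the three defining conditions of $\M_e(\beta,\gamma,r,x_0)$ (and symmetrically $\M_h$) directly for $Tf = K\ast f$, using only that $K\in L^1$ with compact support, say $\supp K\subseteq B(0,R)$ in the Euclidean sense. The cancellation condition \eqref{sm2.3} is immediate: since $f$ has integral zero and $K\in L^1$, Fubini gives $\int Tf = \bigl(\int K\bigr)\bigl(\int f\bigr)=0$ (the double integral is absolutely convergent because $Tf\in L^1$, as $\|K\ast f\|_1\le \|K\|_1\|f\|_1$ and $f\in L^1$ by \eqref{sm2.1} with $\gamma>0$). So the content is in the size and smoothness estimates.

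For the size bound \eqref{sm2.1}, I would split the convolution integral $Tf(x)=\int_{|y|_e\le R} K(y)\,f(x-y)\,dy$ according to whether $|x-x_0|_e$ is comparable to $R$ or much larger. When $|x-x_0|_e\le 2R$, one just bounds $|Tf(x)|\le \|K\|_1 \sup|f| \lesssim \|K\|_1 r^{-n}\lesssim \|K\|_1 \frac{r^\gamma}{(r+|x-x_0|_e)^{n+\gamma}}$ up to a constant depending on $R$ — here I use $r+|x-x_0|_e \lesssim_R 1$ when $r\lesssim 1$, and a separate trivial estimate when $r\gtrsim 1$. When $|x-x_0|_e > 2R$, then for $|y|_e\le R$ we have $|x-y-x_0|_e \ge |x-x_0|_e - R \ge \tfrac12 |x-x_0|_e$, so $|f(x-y)|\lesssim \frac{r^\gamma}{(r+|x-x_0|_e)^{n+\gamma}}$ uniformly in $y$, and integrating against $|K|$ gives the claim with constant $\|K\|_1$. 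The same two-region argument handles $\M_h$, replacing $|\cdot|_e$ by $|\cdot|_h$ and $n+\gamma$ by $n+1+\gamma$, after noting that $|y|_e\le R$ implies $|y|_h\le C_R$ and conversely, so "compact support" is unambiguous.

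The smoothness estimate \eqref{sm2.2} is the main obstacle, because naively differentiating under the convolution would require smoothness of $K$, which we do not have; instead the regularity must be transferred onto the $f$-factor. So I would write, for $|x_1-x_2|_e\le \tfrac12(r+|x_1-x_0|_e)$,
\[
Tf(x_1)-Tf(x_2)=\int_{|y|_e\le R} K(y)\,\bigl(f(x_1-y)-f(x_2-y)\bigr)\,dy,
\]
and apply the Hölder condition \eqref{sm2.2} of $f$ to the integrand. The one check needed is that $|(x_1-y)-(x_2-y)|_e = |x_1-x_2|_e \le \tfrac12(r+|x_1-y-x_0|_e)$, i.e. that the admissibility range for $f$ is preserved; in the far region $|x_1-x_0|_e>2R$ this holds since $|x_1-y-x_0|_e\ge\tfrac12|x_1-x_0|_e\gtrsim r+|x_1-x_0|_e$ when $r\lesssim 1$, and in the near region one again absorbs everything into an $R$-dependent constant using the size bound \eqref{sm2.1} twice (triangle inequality) when the shifted point falls outside the admissible range. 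This yields $|Tf(x_1)-Tf(x_2)|\lesssim \|K\|_1 \bigl(\frac{|x_1-x_2|_e}{r+|x_1-x_0|_e}\bigr)^\beta \frac{r^\gamma}{(r+|x_1-x_0|_e)^{n+\gamma}}$, and the non-isotropic case is identical with $|\cdot|_h$ and $n+1+\gamma$. Throughout, the only slightly delicate bookkeeping is the uniform handling of the two scaling regimes $r\lesssim 1$ versus $r\gtrsim 1$ in the near-diagonal region, which is where the compact support of $K$ is genuinely used; everything else is a routine application of the triangle inequality for $|\cdot|_e$ and $|\cdot|_h$.
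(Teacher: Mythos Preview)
Your strategy---verifying size, smoothness, and cancellation directly, transferring the regularity from $f$ to $K\ast f$---is the same as the paper's. However, there is a genuine gap in your near-region size estimate. You claim that for $|x-x_0|_e\le 2R$,
\[
\|K\|_1\, r^{-n}\ \lesssim_R\ \|K\|_1\,\frac{r^\gamma}{(r+|x-x_0|_e)^{n+\gamma}},
\]
justifying this by $r+|x-x_0|_e\lesssim_R 1$ when $r\lesssim 1$. But that only bounds the denominator on the right from above; it does nothing for the $r^{-n}$ on the left, and the inequality is simply false when $r\ll R$ (take $|x-x_0|_e=R$: the right side is $\sim r^\gamma R^{-n-\gamma}$, the left is $r^{-n}$). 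In fact the lemma, for a fixed compactly supported $K\in L^1$ with no smoothness, cannot hold with a constant independent of $r$. The paper sidesteps this entirely by normalizing at the outset to $r=1$, $x_0=0$; then $\sup|f|\le\|f\|_{\M_e}$ and $(1+|x|_e)^{-n-\gamma}\ge(1+10s)^{-n-\gamma}$ on the near region $|x|_e\le 10s$, so the bound is immediate with constant $C_s\|K\|_1$.

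Your treatment of the smoothness condition has a gap of the same flavor. When the shifted admissibility $|x_1-x_2|_e\le\tfrac12(r+|x_1-y-x_0|_e)$ fails, ``using the size bound twice'' yields only terms like $\frac{r^\gamma}{(r+|x_i-y-x_0|_e)^{n+\gamma}}$, with no H\"older prefactor $\bigl(\tfrac{|x_1-x_2|_e}{r+|x_1-x_0|_e}\bigr)^\beta$; you must explain why that factor can be inserted for free there. The paper (with $r=1$) observes that failure of the shifted condition forces $|x_1-x_2|_e>\tfrac12$, so in the subcase $|x_1|_e\le 10(s+1)$ the H\"older prefactor is bounded below by a constant depending only on $s$; in the complementary subcase $|x_1|_e>10(s+1)$ one has $|x_1-y|_e\sim|x_2-y|_e\sim|x_1-x_2|_e\sim|x_1|_e$, and the estimate again follows. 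Your sketch omits this case analysis. Both issues are repaired by adopting the paper's normalization $r=1$, $x_0=0$ and carrying out the two subcases as above.
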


\begin{proof}
    We just  prove $Tf(x)=K\ast f(x)$ is bounded
    on $\M_e(\beta, \gamma, r, x_0),$ since the proof of the boundedness on  $\M_h(\beta, \gamma, r, x_0)$ is similar.
    Without loss of generality we may assume $r=1$ and $x_0=0.$ Suppose that supp$K\subseteq\{x:|x|_e\leqslant s\}$ and $f\in \M_e(\beta, \gamma, 1, 0).$ Observing $|f(y)|\leqslant \|f\|_{\M_e(\beta, \gamma, 1, 0)}$ and $K \in L^1(\R^n)$ implies that if $|x|_e\leqslant 10s$ then
    $$
    |Tf(x)|=\Big|\int\limits_{\R^n}K(x-y)f(y)dy\Big|\leqslant \|f\|_{\M_e(\beta, \gamma, 1, 0)}\|K\|_1\lesssim  \|f\|_{\M_e(\beta, \gamma, 1, 0)}\|K\|_1 \frac1{(1+|x|_e)^{n+\gamma}}.$$

    When $|x|_e\geqslant 10s$ and $y\in \supp K$ then $1+|x-y|_e\geqslant \frac{1}{2}( 1+|x|_e)$ and hence
    $$\Big|\int\limits_{\R^n}K(y)f(x-y)dy\Big|
        \leqslant\int\limits_{|y|_e\leqslant \frac1{10}|x|_e}|K(y)|\frac{\|f\|_{\M_e(\beta, \gamma, 1, 0)}}{(1+|x-y|_e)^{n+\gamma}}dy
        \lesssim\|f\|_{\M_e(\beta, \gamma, 1, 0)}\|K\|_1 \frac1{(1+|x|_e)^{n+\gamma}}.$$
    Now we  estimate $Tf(x_1)-Tf(x_2)=\int\limits_{\R^n}K(y)[f(x_1-y)-f(x_2-y)]dy$ for  $|x_1-x_2|_e\leqslant \frac1{2} (1+|x_1|_e)$ and write the integral by two integrals which take over $|x_1-x_2|_e\leqslant \frac1{2}(1+|x_1-y|_e)$ and
    $|x_1-x_2|_e> \frac1{2}(1+|x_1-y|_e),$ respectively. For the first integral if $|x_1|_e\leqslant 10s,$ applying the smooth condition on $f$ yields
    \begin{eqnarray*}
    &&\int\limits_{|x_1-x_2|_e\leqslant \frac1{2}(1+|x_1-y|_e)}|K(y)| |f(x_1-y)-f(x_2-y)|dy\\
    &\leqslant&
    \|f\|_{\M_e(\beta, \gamma, 1, 0)}\int\limits_{|x_1-x_2|_e\leqslant \frac1{2}(1+|x_1-y|_e)}|K(y)|\Big(\frac{|x_1-x_2|_e}{1+|x_1-y|_e}\Big)^\beta
    \frac{1}{(1+|x_1-y|_e)^{n+\gamma}}dy\\
    &\leqslant&
    C_s\|f\|_{\M_e(\beta, \gamma, 1, 0)} \|K\|_1 \Big(\frac{|x_1-x_2|_e}{1+|x_1|_e}\Big)^\beta
    \frac{1}{(1+|x_1|_e)^{n+\gamma}},
       \end{eqnarray*}
    where the last inequality follows from the support condition on $K,$ that is, $|y|\leqslant s$ and hence, $1+|x_1-y|_e\sim 1+|x_1|_e.$

    To estimate the second integral, by the size condition on $f,$ we have
    \begin{eqnarray*}
    &&\int\limits_{|x_1-x_2|_e> \frac1{2}(1+|x_1-y|_e)}|K(y)| |f(x_1-y)-f(x_2-y)|dy\\
&\leqslant& \|f\|_{\M_e(\beta, \gamma, 1, 0)}\cdot \int\limits_{|x_1-x_2|_e> \frac1{2}(1+|x_1-y|_e)} |K(y)|\Big[
\frac{1}{(1+|x_1-y|_e)^{n+\gamma}}+\frac{1}{(1+|x_2-y|_e)^{n+\gamma}}\Big] dy.
\end{eqnarray*}
To estimate the last integral above we consider two cases: $|x_1|_e\leqslant 10(s+1)$ and $|x_1|_e>10(s+1).$ For the first case, observing $|x_1-x_2|_e> \frac1{2}$ and hence
\begin{eqnarray*}
    &&\int\limits_{|x_1-x_2|_e> \frac1{2}(1+|x_1-y|_e)} |K(y)|\Big[
\frac{1}{(1+|x_1-y|_e)^{n+\gamma}}+\frac{1}{(1+|x_2-y|_e)^{n+\gamma}}\Big] dy
\\
&\leqslant& C_s\|K\|_1\Big(\frac{|x_1-x_2|_e}{1+|x_1|_e}\Big)^\beta
\frac{1}{(1+|x_1|_e)^{n+\gamma}}.
\end{eqnarray*}
To deal with the second case, when $|x_1|_e> 10(s+1), |x_1-x_2|_e>
    \frac1{2}(1+|x_1-y|_e),|x_1-x_2|_e\leqslant \frac1{2} (1+|x_1|_e),$ and  $|y|_e<s,$ then $|x_1-y|_e\sim |x_2-y|_e\sim |x_1-x_2|_e\sim |x_1|_e.$
    Therefore,
    \begin{eqnarray*}
        &&\int\limits_{|x_1-x_2|_e> \frac1{2}(1+|x_1-y|_e)} |K(y)|\Big[
        \frac{1}{(1+|x_1-y|_e)^{n+\gamma}}+\frac{1}{(1+|x_2-y|_e)^{n+\gamma}}\Big] dy
        \\
        &\lesssim& \|K\|_1 \cdot   \frac{1}{(1+|x_1|_e)^{n+\gamma}} \lesssim  \|K\|_1 \Big(\frac{|x_1-x_2|_e}{1+|x_1|_e}\Big)^\beta
        \frac{1}{(1+|x_1|_e)^{n+\gamma}}.
    \end{eqnarray*}

    At last if $f\in \M_e(\beta, \gamma, 1, 0),$ then
    $$\int\limits_{\R^n}Tf(x)dx=\int\limits_{\R^n}\int\limits_{\R^n}K(y)f(x-y)dydx=\int\limits_{\R^n}K(y)\int\limits_{\R^n}f(x-y)dxdy=0.$$
\end{proof}
We now prove Theorem \ref{tm1.6}
\begin{proof}
    We first show that if $f\in \M_h(\beta, \gamma, 1, 0)$ then
    $$|{T}^h_{\varepsilon}(f)(x)|\leqslant C\|f\|_{\M_e(\beta, \gamma, 1, 0)} \frac{1}{(1+|x|_h)^{n+1+\gamma}}$$
    where the constant $C$ is independent of $\varepsilon.$

    To this end, we may assume that $K(x)=E_k(x)H_{\l}(x)$
    for $|x|_h\leqslant 2.$ Let $\varphi(x)\in C_0^\infty(\R^n)$ with $\varphi(x)=1$ for $|x|_h\leqslant
    1$ and $\varphi(x)=0$ for $|x|_h\geqslant
    2.$ Let $K_h^\prime(x)=\varphi(x)K_h(x).$
    It is easy to see that
    \begin{eqnarray}\label{e1}
    |K(x)-K_h^\prime(x)|\leqslant \big\{ |K(x)|+|K_h^\prime(x)|\big\}\lesssim |x'|_e^{-k}|x|_h^{-l},\text{ for all }
    |x|_h\leqslant 1.
    \end{eqnarray}
    Observing
    $$
    |E_k(x',0)-E_k(x',x_n)|\lesssim |x'|_e^{-k-1} |x_n|
    $$
    gives
    \begin{eqnarray}\label{e2}
    |K(x)-K^\prime_h(x)|=|[E_k(x', x_n)-E_k(x',0)]H_{\l}(x)|\lesssim
    |x'|_e^{-k-1}|x_n| |x|_h^{-l}
    \end{eqnarray}
for all $|x|_h\leqslant 1.$
    Combing (\ref{e1}) and (\ref{e2}) for $|x|_h\leqslant
    1$ gives
    $|K(x)-K_h^\prime(x)|\lesssim
    |x_n|^\frac1{2}|x'|_e^{-k-\frac1{2}}|x|_h^{-l},$  which is locally
    integrable. Moreover, it has compact support and therefore $K(x)-K_h^\prime(x)$ is in $L^1(\R^n).$ We write
    $$T^h_\varepsilon f(x)=\int\limits_{|y|_h\geqslant \varepsilon} K(y)f(x-y)dy=
    \int\limits_{|y|_h\geqslant \varepsilon} [K(y)-K^\prime_h(y)]f(x-y)dy +\int\limits_{|y|_h\geqslant \varepsilon} K^\prime_h(y)f(x-y)dy$$
    and
    $$\int\limits_{|y|_h\geqslant \varepsilon} K^\prime_h(y)f(x-y)dy=\int\limits_{\varepsilon\leqslant|y|_h\leqslant 1} K^\prime_h(y)f(x-y)dy + \int\limits_{|y|_h> 1} K^\prime_h(y)f(x-y)dy.$$
    Since $K(x)-K^\prime_h(x)\in L^1(\R^n)$ and $K^\prime_h(x)$ is integrable for $|x|_h\geqslant 1,$ by Lemma \ref{lm2.3}, $\int\limits_{|y|_h\geqslant \varepsilon} [K(y)-K^\prime_h(y)]f(x-y)dy$ and $\int\limits_{|y|_h> 1} K^\prime_h(y)f(x-y)dy$ are test functions in $\mathbb M_h(\beta, \gamma, r, x_0).$
    Thus, we only need to show, by the translation and dilation, that
    if $f\in \M_h(\beta, \gamma, 1, 0)$ then
    $$|{\widetilde T}^h_{\varepsilon}(f)(x)|\leqslant C \|f\|_{\M_e(\beta, \gamma, 1, 0)} \frac{1}{(1+|x|_h)^{n+1+\gamma}},$$
    where
    $${\widetilde T}^h_{\varepsilon}(f)(x)=\int\limits_{\varepsilon\leqslant|y|_h\leqslant 1} K^\prime_0(y)f(x-y)dy=\int\limits_{\varepsilon\leqslant|y|_h\leqslant 1} K_0(y)f(x-y)dy$$
    and the constant $C$ is independent of $\varepsilon.$

    To this end, consider two cases:{\rm (1)}: $|x|_h\leqslant 2$ and
    {\rm (2)}: $|x|_h> 2.$

    For the first case, by the fact that $K_0(x)$ has mean-value zero on the unit sphere $|x|_h=1,$ we have
    \begin{align*}
    {\widetilde T}^h_{\varepsilon}(f)(x) =\int\limits_{\varepsilon \leqslant|y|_h\leqslant 1} K_0(y)[f(x-y)-f(x)]dy.
    \end{align*}
    Observe that if $|y|_h\leqslant 1$ then $|f(x-y)-f(x)|\leqslant C\|f\|_{\M_e(\beta, \gamma, 1, 0)}|y|_h^\alpha,$ where by the condition $\l>2$ one can take $\alpha$ so that $0<\alpha\leqslant \beta\leqslant 1$ with $\l>2+\alpha.$ Applying the size condition of $K_0(y)$
    gives
    $$|{\widetilde T}^h_{\varepsilon}(f)(x)| \lesssim \|f\|_{\M_e(\beta, \gamma, 1, 0)} \int\limits_{\varepsilon \leqslant|y|_h\leqslant 1} |y'|^{-k}(|y'|+|y_n|^{\frac{1}{2}})^{-\l}|y|_h^\alpha dy$$
    The above integral is dominated by two integrals which take over $\{ |y'|\leqslant 1, |y_n|>|y'|^2\}$ and $\{|y'|\leqslant 1, |y_n|\leqslant |y'|^2\},$ repectively. The both integrals are bounded by a constant multiplying
    $$\int\limits_{|y'|\leqslant 1}|y'|^{-k-\l+\alpha+2}dy'\lesssim C$$
    since $k+\l=n+1$ so $-k-\l+\alpha+2=-(n-1)+\alpha$ with $\alpha>0.$
    This implies that for $|x|_h\leqslant 2,$
    $$|{\widetilde T}^h_{\varepsilon}(f)(x)|\leqslant C \|f\|_{\M_e(\beta, \gamma, 1, 0)}\lesssim \|f\|_{\M_e(\beta, \gamma, 1, 0)} \frac{1}{(1+|x|_h)^{n+1+\gamma}}.$$
    For the case (2), if $|x|_h>2$ then $|y|_h\leqslant 1\leqslant \frac{1}{2}(1+|x|_h).$ By the smoothmess condition on $f,$
    $$|f(x-y)-f(x)|\leqslant \|f\|_{\M_e(\beta, \gamma, 1, 0)} |y|_h^\alpha
    \frac{1}{(1+|x|_h)^{n+1+\gamma}}$$
    which together with the same proof as for the estimate of case (1) gives
    $$|{\widetilde T}^h_{\varepsilon}(f)(x)|\lesssim \|f\|_{\M_e(\beta, \gamma, 1, 0)}\frac{1}{(1+|x|_h)^{n+1+\gamma}}\int\limits_{\varepsilon \leqslant|y|_h\leqslant 1} |y'|^{-k}(|y'|+|y_n|^{\frac{1}{2}})^{-\l}|y|_h^{\alpha}dy.$$
    Therefore, we have
    \begin{eqnarray}\label{2.9}
    |{T}^h_{\varepsilon}(f)(x)|\leqslant C \|f\|_{\M_e(\beta, \gamma, 1, 0)} \frac{1}{(1+|x|_h)^{n+1+\gamma}}.
\end{eqnarray}
    To see how this estimate implies the $L^2$ boundedness of ${T}^h_{\varepsilon}(f)(x)$ uniformly for $\varepsilon>0,$ we recall that $\psi^{(2)}$ is the function
    satisfying conditions in $(\ref{1.3})$-
    $(\ref{1.4}).$ Similar to the discrete Calder\'on's identity given in \cite{fj}, one can show the following
        \begin{eqnarray*}
            f(x', x_n)&=&  \sum \limits_{k\in \Z}  \sum \limits_{(\ell',\ell_n
                ) \in \Z^{n-1}\times \mathbb Z}  2^{-(n-1)
                k} \ 2^{-2k} (\psi^{(2)}_{j,k} \ast f)(2^{- k}\ell', 2^{-  2k}\ell_n)\\  &&\times
            \psi^{(2)}_{k}(x'-2^{- k}\ell', x_n-2^{-
                2k}\ell_n) ,
        \end{eqnarray*} where the series converges in
        $L^2(\R^n),\mathcal{S}_\infty(\R^n)=\{f\in \mathcal{S}: \int\limits_{\R^n} f(x)x^\alpha dx=0, 0\leqslant |\alpha|\}$ and $\mathcal{S}'/\mathcal{P}(\R^{n}).$

    Thus, if $f\in L^2(\R^n)$ and by the $L^2$ boundedness of $T^h_{\varepsilon},$
    \begin{eqnarray*}
        T^h_\varepsilon (f)(x', x_n)&=&  \sum \limits_{k\in \Z}  \sum \limits_{(\ell',\ell_n
            ) \in \Z^{n-1}\times \mathbb Z}  2^{-(n-1)
            k} \ 2^{-2k} \psi^{(2)}_{j,k} \ast T^h_{\varepsilon}(f)(2^{- k}\ell', 2^{-  2k}\ell_n)\\  &&\times
        \psi^{(2)}_{k})(x'-2^{- k}\ell', x_n-2^{-
            2k}\ell_n).
    \end{eqnarray*}
As in the isotropic case, by the Littlewood-Paley theory on $L^2(\R^n),$
$$\|f\|^2_2\sim \sum \limits_{k\in \Z}  \sum \limits_{(\ell',\ell_n
    ) \in \Z^{n-1}\times \mathbb Z}  2^{-(n-1)
    k} 2^{-2k} |(\psi^{(2)}_{k} \ast f) (2^{- k}\ell', 2^{-2k}\ell_n) |^2.$$
Therefore
\begin{eqnarray*}&&\|T^h_\varepsilon (f)\|^2_2 =\sum \limits_{k'\in \Z}  \sum \limits_{(\ell',\ell_n
    ) \in \Z^{n-1}\times \mathbb Z}  2^{-(n-1)
    k'} 2^{-2k'} |(\psi^{(2)}_{k'} \ast T^h_\varepsilon (f)) (2^{- k'}\ell', 2^{-2k'}\ell_n) |^2\\
&\lesssim& \sum \limits_{k'\in \Z}  \sum \limits_{(\ell',\ell_n ) \in \Z^{n-1}\times \mathbb Z}  2^{-(n-1)
    k'} 2^{-2k'}
 \Big|\sum \limits_{k\in \Z}  \sum \limits_{(\ell',\ell_n
    ) \in \Z^{n-1}\times \mathbb Z}  2^{-(n-1)
    k} 2^{-2k} (\psi^{(2)}_{j,k} \ast f)(2^{-k}\ell', 2^{-2k}\ell_n) \\&&\times(\psi^{(2)}_{k'}\ast T^h_\varepsilon\ast \psi^{(2)}_{k})(2^{-k'}\ell'-2^{-k}\ell', 2^{2k'}\ell_n-2^{-
    2k}\ell_n)\Big|^2.\end{eqnarray*}
Note that $\psi^{(2)}_{k'}\ast
T^h_\varepsilon\ast\psi^{(2)}_{k}(x)=T^h_\varepsilon\ast
(\psi^{(2)}_{k'}\ast \psi^{(2)}_{k})(x)$ where $\psi^{(2)}_{k'}\in
\M_h(1,1, 2^{-k'}, 0) $ and $\psi^{(2)}_{k}\in \M_h(1,1, 2^{-k},
0).$ Moreover, $\psi^{(2)}_{k'}\ast \psi^{(2)}_{k}(x)$ satisfies the
similar estimates as $\psi^{(2)}_{k\wedge k'}(x)$ and
$\psi^{(2)}_{k\wedge k'}(x)\in \M_h(1,1, 2^{-(k\wedge k')},0)$ with
the norm less than $C 2^{-|k-k'|\varepsilon}, 0<\varepsilon<1.$ Thus, by the estimate in \ref{2.9},
$$|(\psi^{(2)}_{k'}T^h_\varepsilon \psi^{(2)}_{k})(x)|=|T^h_\varepsilon(\psi^{(2)}_{k'}\ast \psi^{(2)}_{k})(x)|\leqslant C 2^{-|k-k'|\varepsilon} \frac{2^{-(k\wedge k')\gamma}}{(2^{-(k\wedge k')}+|x|_h)^{n+1+\gamma}}$$
and, particularly,
\begin{eqnarray*}&&|(\psi^{(2)}_{k'}T^h_\varepsilon
\psi^{(2)}_{k})(2^{-k'}\ell'-2^{-k}\ell',
2^{2k'}\ell_n-2^{-2k}\ell_n)|\\&\lesssim& C 2^{-|k-k'|^\varepsilon}
\frac{2^{-(k\wedge k')\gamma}}{(2^{-(k\wedge
k')}+|(2^{-k'}\ell'-2^{-k}\ell', 2^{2k'}\ell_n-2^{-
        2k}\ell_n)|_h)^{n+1+\gamma}}.\end{eqnarray*}
Observe that there exits a constant $C$ such that
$$\sum \limits_{k\in \Z}  \sum \limits_{(\ell',\ell_n
    ) \in \Z^{n-1}\times \mathbb Z}  2^{-(n-1)
    k} 2^{-2k} \frac{2^{-(k\wedge k')\gamma}}{(2^{-(k\wedge k')}+|(2^{-k'}\ell'-2^{-k}\ell', 2^{2k'}\ell_n-2^{-
        2k}\ell_n)|_h)^{n+1+\gamma}}\leqslant C$$
and
$$\sum \limits_{k'\in \Z}  \sum \limits_{(\ell',\ell_n
    ) \in \Z^{n-1}\times \mathbb Z}  2^{-(n-1)
    k'} 2^{-2k'} \frac{2^{-(k\wedge k')\gamma}}{(2^{-(k\wedge k')}+|(2^{-k'}\ell'-2^{-k}\ell', 2^{2k'}\ell_n-2^{-
        2k}\ell_n)|_h)^{n+1+\gamma}}\leqslant C.$$
    Indeed, let $Q_{k,\ell}=I_{k,\ell'}\times J_{k,\ell_n}$ with $k\in \Z, \ell=\{\ell',\ell_n\}\in \Z^{n-1}\times \Z,$ where $I_{k,\ell'}$ are dyadic cubes in $\R^{n-1}$ with the side length $2^{-k}$ and its lower left corner at $2^{-k}\ell',$ and $J_{k,\ell_n}$ are dyadic intervels in $\R$ with the length $2^{-2k}$ and its left point of the intervel at $2^{-2k}\ell_n.$

    Then
    \begin{eqnarray*}
    &&\sum \limits_{k\in \Z}  \sum \limits_{\ell\in \Z^{n}} 2^{-(n-1)
        k} 2^{-2k}\frac{2^{-(k\wedge k')\gamma}}{(2^{-(k\wedge k')}+|(2^{-k}\ell'-2^{-k'}\ell',
        2^{2k}\ell_n-2^{2k'}\ell_n)|_h)^{n+1+\gamma}}\\
    &\leqslant& C\sum \limits_{k\in \Z}\sum\limits_{\ell\in \Z^{n}}\int\limits_{Q_{k,\ell}} \frac{2^{-(k\wedge k')\gamma}}{(2^{-(k\wedge k')}+|x'-2^{-k'}\ell',
    x_n-2^{-2k'}\ell_n)|_h)^{n+1+\gamma}}dx \leqslant C.
    \end{eqnarray*}

    We return to estimate $\|T^h_\varepsilon (f)\|^2_2.$ Applying the H\"older inequality and inserting the above two estimates into the upper bound of $\|T^h_\varepsilon (f)\|^2_2,$ we have
$$\|T^h_\varepsilon (f)\|^2_2\lesssim \sum \limits_{k\in \Z}  \sum \limits_{(\ell',\ell_n ) \in \Z^{n-1}\times \mathbb Z}  2^{-(n-1)
    k} 2^{-2k}  |(\psi^{(2)}_{k}\ast f)(2^{-k}\ell', 2^{2k}\ell_n)|^2\lesssim \|f\|^2_2,$$
which implies that $T^h_{\varepsilon}$ is bounded on $L^2(\R^n)$ uniformly for $\varepsilon>0.$

To see that $T^h_\varepsilon (f)(x)$ has the limit in $L^2(\R^n)$ as $\varepsilon$ tends to zero, it suffices to show
$$T^h_{\varepsilon', \varepsilon}(f)(x)=\int\limits_{0<\varepsilon' \leqslant|y|_h\leqslant \varepsilon} K_0(y)f(x-y)dy$$
has limit zero in $L^2(\R^n)$ as $\varepsilon$ tends to zero. To this end, repeating the same proof as $T^h_{\varepsilon}(f)(x)$ we can obtain
 $$|T^h_{\varepsilon', \varepsilon}(f)(x)|\lesssim C(\varepsilon)\frac{r^\gamma}{(r+|x-x_0|_h)^{n+1+\gamma}},$$
where
 $$C(\varepsilon)=C\int\limits_{|y|_h\leqslant \varepsilon} |y'|^{-k}(|y'|+|y_n|^{\frac{1}{2}})^{-\l}|y|_h^\alpha dy\rightarrow 0$$
 when $\varepsilon\rightarrow 0^+.$

This implies that $\lim\limits_{\varepsilon',\varepsilon \rightarrow
0^+}\int\limits_{\R^n}|T^h_{\varepsilon', \varepsilon}(f)(x)|^2 dx =0$ and hence,
 $T^h(f)(x)=\lim\limits_{\varepsilon\to 0^+}T^h_\varepsilon (f)(x)$ in $L^2(\R^n) $ and
$$ \|T^h(f)(x)\|_2\leqslant C\|f\|_2.$$
We now show that $T^h(f)(x)=K\ast f(x)$ is of weak-type (1,1). To this end, we write
$$K\ast f(x)=\big(K-K_0\big)\ast f(x) +K_0\ast f(x).$$
$\big(K-K_0\big)\ast f(x)$ is of weak-type (1,1) since $K(x)-K_0(x)\in L^1(\R^n).$ It suffices to show that $K_0\ast f(x)$ is of weak-type (1,1). By the $L^2$ boundedness of $K_0\ast f(x)$, we need only to show that the kernel of $K_0(x)$ satisfies the following the H\"ormander condition
\begin{eqnarray}\label{2.10}
\int\limits_{2|x_1-x_2|_h\leqslant |y-x_1|_h} |K_0(x_1-y)-K_0(x_2-y)|dy\leqslant C
\end{eqnarray}
for all $x_1,x_2\in \R^n.$

Observing
$K_0(x_1-y)-K_0(x_2-y)=[E_k(x'_1-y',0)-E_k(x'_2-y',0)]H_\ell(x_1-y)+E_k(x'_2-y',0)[H_\ell(x_1-y)-H_\ell(x_2-y)]$
and then applying the size and smoothness
conditions on $E_k$ and $H_\ell$ give that if $|x_1-x_2|_h\leqslant
\frac{1}{2}|y-x_1|_h$ we need to estimate the integrals corresponding to four cases: (1)
$\frac{|x'_1-x'_2|}{|x'_1-y'|^{k+1}}\frac{1}{|x_1-y|_h^l}$ when
$|x'_1-x'_2|\leqslant \frac{1}{2}|x'_1-y'|;$ (2)
$\frac{1}{|x'_1-y'|^k}\frac{1}{|x_1-y|_h^l}$ when
$|x'_1-x'_2|\geqslant \frac{1}{2}|x'_1-y'|;$ (3)
$\frac{1}{|x'_2-y'|^{k}}\frac{1}{|x_1-y|_h^l}$ when
$|x'_1-x'_2|\geqslant  \frac{1}{2}|x'_1-y'|;$ and (4)
$\frac{1}{|x'_2-y'|^{k}}\frac{|x_1-x_2|_h}{|x_1-y|_h^{l+1}}.$

To estimate the case (1), write the corresponding integral by
$$\int\limits_{2|x_1-x_2|_h\leqslant |x_1-y|_h\atop 2|x'_1-x'_2|\leqslant |x'_1-y'|}\frac{|x'_1-x'_2|}{|x'_1-y'|^{k+1}}\frac{1}{|x_1-y|_h^l} dy$$
and then estimate the integral with respect to $y_n$ by
$$ \int\limits_{\R}\frac{1}{|x_1-y|_h^l} dy_n\leqslant C
\frac{1}{|x'_1-y'|^{l-2}}.$$
This implies
$$\int\limits_{2|x_1-x_2|_h\leqslant |x_1-y|_h\atop 2|x'_1-x'_2|\leqslant |x'_1-y'|}\frac{|x'_1-x'_2|}{|x'_1-y'|^{k+1}}\frac{1}{|x_1-y|_h^l} dy\lesssim\int\limits_{2|x'_1-x'_2|\leqslant |x'_1-y'|}\frac{|x'_1-x'_2|}{|x'_1-y'|^{k+l-1}}dy'\lesssim C$$
since $k+l-1=n.$

To estimate the case (2), observe that $|x_1-y|_h\geqslant 2|x_1-x_2|_h\geqslant 2|x'_1-x'_2|$ and hence, if $|x'_1-x'_2|\geqslant \frac{1}{2}|x'_1-y'|$ and $|x_{1n}-y_n|^{\frac{1}{2}}\leqslant |x'_1-x'_2|$ then $2|x'_1-x'_2|\geqslant |x'_1-y'|\geqslant |x'_1-x'_2|.$ This gives
$$\int\limits_{2|x'_1-x'_2|\geqslant |x'_1-y'|\atop |x_{1n}-y_n|^{\frac{1}{2}}\leqslant |x'_1-x'_2| }\frac{1}{|x'_1-y'|^k}\frac{1}{|x_1-y|_h^l} dy\lesssim \int\limits_{|x'_1-x'_2|\leqslant |x'_1-y'|\leqslant 2|x'_1-x'_2|}\frac{1}{|x'_1-y'|^{k+l-2}}dy'\lesssim C$$
since $k+l-2=n-1.$

If $|x_{1n}-y_n|^{\frac{1}{2}}\geqslant |x'_1-x'_2|$ then
$$ \int\limits_{|x_{1n}-y_n|^{\frac{1}{2}}\geqslant |x'_1-x'_2|}\frac{1}{|x_1-y|_h^l} dy_n\leqslant C
\frac{1}{|x'_1-x'_2|^{l-2}}$$
and hance,
$$\int\limits_{2|x'_1-x'_2|\geqslant |x'_1-y'|\atop |x_{1n}-y_n|^{\frac{1}{2}}\geqslant |x'_1-x'_2| }\frac{1}{|x'_1-y'|^k}\frac{1}{|x_1-y|_h^l} dy\lesssim \int\limits_{|x'_1-y'|\leqslant 2|x'_1-x'_2|}\frac{|x'_1-x'_2|^{2-l}}{|x'_1-y'|^{k}}dy'\lesssim C.$$

Since $|x_1-y|_h\sim |x_2-y|_h$ when $|x_1-x_2|_h\leqslant
\frac{1}{2}|x_1-y|_h$ the proof for the case (3) is similar to the case (2).

Now we estimate the case (4). Observe $|x_1-y|_h\sim |x_2-y|_h$ and $|x_1-x_2|_h\leqslant |x_2-y|_h$ when $|x_1-x_2|_h\leqslant
\frac{1}{2}|x_1-y|_h$ and we then consider two cases: $\frac{1}{2}|x_1-x_2|_h\leqslant |x'_2-y'|$ and $\frac{1}{2}|x_1-x_2|_h\geqslant |x'_2-y'|.$ For the first case, applying the above estimate $\int\limits_{\R^n}\frac{1}{|x_2-y|_h^{l+1}}dy_n\leqslant C \frac{1}{|x'_2-y'|^{l-1}}$
gives
$$\int\limits_{\frac{1}{2}|x_1-x_2|_h\leqslant |x'_2-y'|}\frac{1}{|x'_2-y'|^k}\frac{|x_1-x_2|_h}{|x_2-y|_h^{l+1}} dy\lesssim \int\limits_{\frac{1}{2}|x_1-x_2|_h\leqslant |x'_2-y'|}\frac{|x_1-x_2|_h}{|x'_2-y'|^{k+l-1}}dy'\lesssim C.$$
While for the second case we get $|x_{2,n}-y_n|\geqslant \frac{|x_1-x_2|^2_h}{4}$ and hence,
$$\int\limits_{|x_{2,n}-y_n|\geqslant \frac{|x_1-x_2|^2_h}{4}}\frac{1}{|x_2-y|_h^{l+1}}dy_n\leqslant C \frac{1}{|x_1-x_2|_h^{l-1}}$$
which implies
$$\int\limits_{\frac{1}{2}|x_1-x_2|_h\geqslant |x'_2-y'|\atop |x_{2,n}-y_n|\geqslant \frac{|x_1-x_2|^2_h}{4}}\frac{1}{|x'_2-y'|^k}\frac{|x_1-x_2|_h}{|x_2-y|_h^{l+1}} dy\lesssim \int\limits_{\frac{1}{2}|x_1-x_2|_h\geqslant |x'_2-y'|}\frac{|x_1-x_2|^{2-l}_h}{|x'_2-y'|^{k}}dy'\lesssim C.$$
The H\"ormander inequality is proved and hence, $T^h$ is of weak-type (1,1). The boundedness of $T^h$ and $T^e$  on $L^p(\R^n), 1<p<\infty,$ follows from the interpolation and the duality argument.

To show that $T^h$ is bounded from $H_h^1(\R^n)$ to $L^1(\R^n)$ and from $L^\infty(\R^n)$ to $BMO_h$, we again write $T^h(f)(x)=K\ast f(x)$ by
$K\ast f(x)=\big(K-K_0\big)\ast f(x) +K_0\ast f(x).$
$\big(K-K_0\big)\ast f(x)$ is bounded from $H_h^1(\R^n)$ to $L^1(\R^n)$ and from $L^\infty(\R^n)$ to $BMO_h$ since $K(x)-K_0(x)\in L^1(\R^n).$ It suffices to show that $T_0^h(f)(x)=K_0\ast f(x)$ is bounded from $H_h^1(\R^n)$ to $L^1(\R^n)$ and from $L^\infty(\R^n)$ to $BMO_h$. We first show $T_0^h(f)$ is  bounded from $H_h^1(\R^n)$ to $L^1(\R^n).$
To this end, we apply the atomic decomposition for $H_h^1(\R^n).$ Suppose that $f\in H_h^1(\R^n)$ and $f(x)$ has an atomic decomposition by $f(x)=\sum\limits_{j\in \Z}\lambda_j a_j(x)$ where $a_j(x)$ are $(1,2)-$atoms, that is, (1): supp$ a_j(x)\subset Q_j$ with $Q_j$ are cubes in the non-isotropic homogeneity; (2): $\|a_j\|_2\leqslant |Q_j|_h^{-\frac{1}{2}}$ where $|Q|_h$ means the measure of the cube $Q$ in the non-isotropic homogeneity; (3): $\int\limits_{\R^n} a_j(x)dx=0.$ By the classical result, to show that $T^h$ is bounded from $H_h^1(\R^n)$ to $L^1(\R^n),$ it suffices to show that $\|a(x)\|_1\leqslant C$ for each $(1, 2)-$atom $a(x)$ and the constant $C$ is independent of $a(x).$ To this end, suppose that $a(x)$ is an $(1,2)-$atom with the support $Q.$ Denote $2Q$ by the cube with the same center and the double side length as $Q.$ By the $L^2-$boundeness of $T_0^h$ and the H\"older inequality,
$$\int\limits_{2Q}|T_0^h(a)(x)|dx\leqslant |2Q|_h^{\frac{1}{2}}\|T_0^h(a)\|_2
\leqslant C|Q|_h^{\frac{1}{2}} \|a\|_2\leqslant C.$$
Applying the cancellation condition of $a(x)$ gives
\begin{eqnarray*}\int\limits_{(2Q)^c}|T_0^h(x)|dx&=&\int\limits_{(2Q)^c}\Big|\int\limits_{Q}K_0(x-y)a(y)dy\Big|dx\\&\leqslant&
\int\limits_{Q}\int\limits_{|y-y_Q|_h
    \leqslant \frac{1}{2}|x-y_Q|_h}|K_0(x-y)-K_0(x-y_Q)|dx|a(y)|dy\leqslant C,\end{eqnarray*}
where the last inequality follows from the H\"ormander condition given in \ref{2.10} and the fact $\|a\|_1\leqslant 1.$

We would like to point out that the above estimate does not work for $p<1.$ Because for $|y-y_Q|_h\leqslant \frac{1}{2}|x-y_Q|_h$ we can not get the pointwise estimate for $|K_0(x-y)-K_0(x-y_Q)|$ while in the classical case one would have $|K(x-y)-K(x-y_Q)|\leqslant C\frac{|y-y_Q|_h^\varepsilon}{|x-y_Q|_h^{n+1+\varepsilon}}.$ However, we will show the case for $p<1$ in Corollary \ref{cor1.8}.

Finally, to show the $L^\infty-BMO(\R^n)$ boundedness of $T_0^h,$
we first provide a strict definition of $T^h_0f(x)$ when $f\in L^\infty.$
To this end, we
follow the idea given in \cite{mc}. If $f\in L^\infty(\R^n),$ we define the functions $f_j(x)$ by $f_j(x)=f(x),$ when
$|x|_h\leqslant j,$ and $f_j(x)=0,$ if $|x|_h>j.$ Since $f_j\in
L^2(\R^),$  $T^h_0(f_j)$ is well defined by the action of $T^h_0$
on $L^2(\R^n).$ We claim that there exists a sequence $\{c_j\}_j$
of constants such that $T^h_0(f_j)-c_j$ converges, uniformly on any
compact set in $\R^n,$ to a function in $BMO_h(\R^n)$ which
will be defined by $T^h_0(f)$ modulo the constant functions. Indeed, set
$c_j=\int\limits_{1\leqslant |y|_h\leqslant j}K_0(y)dy.$ Observe
that, by the size condition on the kernel $K_0(y),$
\begin{align*}
|c_j|
&\leqslant C \int\limits_{1\leqslant |y|_h\leqslant j}\frac{1}{|y|_e^k|y|_h^{\frac{l}{2}}}dy\leqslant C \int\limits_{|y'|\leqslant j}\frac{1}{|y'|^k}dy\leqslant C{j}^{l-2}<\infty.
\end{align*}
To show $T^h_0(f_j)-c_j$ converges uniformly on the compact ball
$B_h(0,R)=\{x: |x|_h\leqslant R\},$ we split $f_j$ into $g+h_j,$ where $g(x)=f(x),$ when
$|x|_h\leqslant 2R,$ and $g(x)=0,$ if $|x|_h>2R.$ Taking $j>2R,$
we have, for $|x|_h\leqslant R,$
\begin{align*}
T^h_0(f_j)(x)&=T^h_0(g)(x)+T^h_0(h_j)(x)=T^h_0(g)(x)+\int\limits_{2R\leqslant |y|_h\leqslant
    j}K_0(x-y)f(y)dy\\
&= T^h_0(g)(x)+\int\limits_{2R\leqslant |y|_h\leqslant
    j}[K_0(x-y)-K_0(y)]f(y)dy+c_j-C(R),
\end{align*} where $C(R)=
\int\limits_{1\leqslant |y|_h\leqslant 2R}K(y)dy.$ Observe that
when $|x|_h\leqslant R,$ by the H\"ormander condition on the kernel
$K_0(x)$ given in \ref{2.10}, we get
$$
\int\limits_{2R\leqslant |y|_h}|K_0(x-y)-K_0(y)|\cdot|f(y)|dy
\leqslant C \int\limits_{2|x|_h\leqslant
    |y|_h}|K_0(x-y)-K_0(y)|dy\|f\|_{\infty}\leqslant C \|f\|_{\infty}.$$

Thus the integral $\int\limits_{2R\leqslant |y|_h\leqslant
    j}|K_0(x-y)-K_0(y)|\cdot|f(y)|dy$ converges uniformly on
$|x|_h\leqslant R$ as $j$ tends to $\infty,$ which implies that
$T^h_0(f_j)-c_j$ converges uniformly on any compact set in $\R^n.$
Once  the $T^h_0(f)$ is defined with $f\in L^\infty(R^n)$ by the above
claim, we can show that $T^h_0(f)\in BMO_h(\R^n)$ and moreover,
$\|T(f)\|_{BMO_h}\leqslant C\|f\|_{\infty}.$ To this end, let $B$
denote an arbitrary ball with center $x_0$ and radius $R,$ and
$2B_h=\{y: |x_0-y|_h\leqslant 2R\}.$ Then we write
$f=f_1+f_2,$ where $f_1$ is the multiplication of $f$ with the characteristic
function of $2B_h.$ We now define $T^h_0(f_2)(x)$ for $x\in B_h$
by the following absolutely convergent integral
$$T^h_0(f_2)(x)=\int\limits_{|x_0-y|_h\geqslant 2R}[K_0(x-y)-K_0(x_0-y)]f(y)dy.$$
Indeed, by the H\"omander condition of $K_0(x),$
\begin{align*}
&\int\limits_{|x_0-y|_h\geqslant 2R}|K_0(x-y)-K_0(x_0-y)|\cdot|f(y)|dy
\leqslant  C\|f\|_\infty.
\end{align*}
Moreover, since $f_1\in L^\infty$ is supported in the bounded set $2B_h,$ we see that
$f_1\in L^2(\R^n).$  Hence $T^h_0(f_1)(x)$ is well-defined.

We can now give a strict definition of $T^h_0(f)(x)$ as follows:
$T^h_0(f)(x)=T^h_0(f_1)(x)+T^h_0(f_2)(x)$.
Further, this definition of $T^h_0(f)(x)$
is only differing by a constant, depending on $x_0$ and $R.$ To see
the proof that $T(f)$ belongs to $BMO_h(\R^n),$ we have
$$\|T^h_0(f_2)\|_\infty\leqslant C\|f\|_\infty$$
and, further,
$$\|T^h_0(f_1)\|_2\leqslant \|T^h_0\|\|f_1\|_2\leqslant C\|f\|_\infty |2B_h|^{1/2}\|T^h_0\|.$$
We thus get
$$\Big(\int\limits_{B_h}\Big|T^h_0(f)(x)-\frac{1}{|B_h|}\int\limits_{B_h}
T^h_0(f)(y)dy\Big|^2
dx\Big)^{1/2}\leqslant C\|f\|_\infty |B_h|^{1/2} + C|2B_h|^{1/2}\|f\|_\infty\|T^h_0\|
\leqslant
C|B_h|^{1/2}\|f\|_\infty.$$
\end{proof}
Recall that $T^h(f)(x)=\lim\limits_{\varepsilon\to 0^+}T^h_\varepsilon (f)(x)$ and $T^e(f)(x)=\lim\limits_{\varepsilon\to 0^+}T^e_\varepsilon (f)(x)$ are given in $L^2(\R^n),$ respectively.
We now introduce the following maximal operators.
$$T_h^*(f)(x)=\sup\limits_{\varepsilon>0} |T^h_\varepsilon(f)(x)|$$
and
$$T_e^*(f)(x)=\sup\limits_{\varepsilon>0} |T^e_\varepsilon(f)(x)|.$$

The Cotlar inequality and the almost everywhere convergence are given by the following two lemmas.
\begin{lemma}\label{lm2.4}
    Let $x=(x',x_n)\in \R^n$ with $x'\in \R^{n-1}$ and  $x_n\in \R.$ $M_S(f),$ the
    strong maximal function of the function $f,$ is defined by

    $$M_Sf(x)=\sup\limits_{r>0\atop
        s>0}\frac{1}{r^{n-1}s}\int\limits_{|y'-x'|<r}\int\limits_{|y_n-x_n|<s}|f(y)|dy_ndy'.$$ Then if $f\in L^p(\R^n),$ for some $p>0$ and $\delta>0,$ we have
    \begin{eqnarray}\label{cotlar1}
    T_e^*(f)(x)\leqslant C\big\{ M_e(|T^ef|^\delta)(x)^{1/\delta}+M_e(|M_S f|^p)(x)^{1/p}+M_S(f)(x)\big\}
    \end{eqnarray}
    and
    \begin{eqnarray}\label{cotlar2}T_h^*(f)(x)\leqslant C\big\{ M_h(|T^h f|^\delta)(x)^{1/\delta}+M_h(|M_S f|^p)(x)^{1/p}+M_S(f)(x)\big\}, \end{eqnarray}
    where $M_e(f)(x)$ and $M_h(f)(x)$ are the Hardy-Littlewood maximal functions with respect to the isotropic and non-isotropic dilations,
    respectively.
\end{lemma}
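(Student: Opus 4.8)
The plan is to run the classical Cotlar argument, with the strong maximal function $M_S$ absorbing the loss produced by the product structure of the kernel. By the symmetry between the two homogeneities it suffices to prove $(\ref{cotlar1})$; $(\ref{cotlar2})$ is the mirror statement, obtained by interchanging $|\cdot|_e$ and $|\cdot|_h$, $M_e$ and $M_h$, and the two normalizations, and using $K_h$ in place of $K_e$. Since $\delta\mapsto M_e(|g|^{\delta})(x)^{1/\delta}$ is nondecreasing (power-mean inequality), we may also assume $0<\delta\le\min(1,p)$, so that $M_e(|M_Sf|^{\delta})^{1/\delta}\le M_e(|M_Sf|^{p})^{1/p}$. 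Fix $x\in\R^n$ and $\varepsilon>0$.

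First I would reduce to the homogeneous part of the kernel exactly as in the proof of Theorem \ref{tm1.6}: write $K=(K-K_0)+K_0$ with $K-K_0\in L^1(\R^n)$ compactly supported and $K_0=K_e$ the homogeneous model, for which Theorem \ref{tm1.6} provides $L^q$-boundedness ($1<q<\infty$) and weak type $(1,1)$, and whose kernel satisfies the $|\cdot|_e$-form of the H\"ormander condition $(\ref{2.10})$. The $\varepsilon$-truncation attached to $K-K_0$ is pointwise dominated by $|K-K_0|*|f|(x)$, and a dyadic decomposition of its bounded support in the variables $y'$ and $y_n$ bounds this by $CM_S f(x)$; since also $T_0^e f=T^e f-(K-K_0)*f$, it is enough to prove $(\ref{cotlar1})$ for $T_0^e$.

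Now the Cotlar splitting: put $B=B_e(x,\varepsilon)$, write $f=f_1+f_2$ with $f_1=f\chi_B$, and note that, $f_1$ vanishing outside $B$, the truncation removes it completely, $T_{0,\varepsilon}^e f(x)=\int_{|x-w|_e\ge\varepsilon}K_0(x-w)f(w)\,dw=T_0^e f_2(x)$ (an absolutely convergent integral). Hence for a.e.\ $z\in\frac12 B:=B_e(x,\varepsilon/2)$,
$$|T_{0,\varepsilon}^e f(x)|\le |T_0^e f(z)|+|T_0^e f_1(z)|+|T_0^e f_2(x)-T_0^e f_2(z)|.$$
I would estimate the distribution function of each of the three terms on $\frac12 B$ and choose the thresholds so that the three exceptional subsets of $\frac12 B$ each have measure $<\frac13|\frac12 B|$; a point $z_*$ avoiding all of them, followed by $\sup_\varepsilon$, then yields $(\ref{cotlar1})$. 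For $|T_0^e f(z)|$ this is just Chebyshev with exponent $\delta$, producing $M_e(|T_0^e f|^{\delta})(x)^{1/\delta}$. For the local term $|T_0^e f_1(z)|$, Kolmogorov's inequality together with the weak $(1,1)$ bound (or the $L^q$-bound when $p>1$), combined with the pointwise bound $|f|\le M_S f$, gives $\frac1{|\frac12 B|}\int_{\frac12 B}|T_0^e f_1|^{q}\lesssim\big(\frac1{|B|}\int_B(M_S f)^{p}\big)^{q/p}\le M_e(|M_S f|^{p})(x)^{q/p}$ for a suitable $q$, hence the term $M_e(|M_S f|^{p})(x)^{1/p}$.

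The main obstacle is the regularity term $|T_0^e f_2(x)-T_0^e f_2(z)|$, because — as the authors note elsewhere in the paper — the product kernel $K_0$ admits no clean pointwise H\"older estimate of the form $|K_0(u)-K_0(v)|\lesssim|u-v|_e^{\gamma}|u|_e^{-n-\gamma}$; one has only the mixed, non-homogeneous derivative bounds used to verify $(\ref{2.10})$ (an isotropic bound coming from $E_k$, a non-isotropic one from $H_l$, and their cross terms), together with a case analysis near the singular hyperplane. I would handle this term by first averaging in $z$ over $\frac12 B$ and using Fubini, reducing matters to $\int_{|x-w|_e\ge\varepsilon}|f(w)|\big(\frac1{|\frac12 B|}\int_{\frac12 B}|K_0(x-w)-K_0(z-w)|\,dz\big)\,dw$; then, running exactly the case analysis behind $(\ref{2.10})$ and decomposing the bounded region $\{|x-w|_e\ge\varepsilon\}$ dyadically in the sizes of $|x'-w'|$ and $|x_n-w_n|$, each dyadic block of volume $\sim 2^{-a(n-1)}2^{-b}$ contributes at most $2^{-a(n-1)}2^{-b}M_S f(x)$, and the resulting double series sums to $\lesssim M_S f(x)$. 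The normalizations $k+\frac12 l=n$, $l<2$ (respectively $k+l=n+1$, $l>2$ for $(\ref{cotlar2})$) are precisely what force the exponent in the relevant geometric sum to equal $1$ while keeping the series over the remaining family of scales summable; this bookkeeping — and the fact that what emerges is the \emph{strong} maximal function $M_S f$ rather than $M_e f$ — is the heart of the argument.
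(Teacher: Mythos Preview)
Your overall strategy is the paper's: a Cotlar argument in which the product structure of the kernel forces the strong maximal function $M_S$ in place of $M_e$ or $M_h$. The substantive difference is in the regularity term. The paper does \emph{not} average over the moving point; it proves the pointwise bound
\[
|T^hf_2(z)-T^hf_2(\bar x)|\le C\bigl(M_Sf(z)+M_Sf(\bar x)\bigr),\qquad z\in B_h(\bar x,\varepsilon/2),
\]
by splitting $K(x{-}y)-K(\bar x{-}y)=[E_k-E_k]H_l+E_k[H_l-H_l]$ and then running a double dyadic decomposition in $|\bar x'-y'|$ and $|\bar x_n-y_n|$ (respectively $|x'-y'|$, $|x_n-y_n|$) directly. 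The appearance of $M_Sf$ at the \emph{moving} point $z$ is precisely what produces the term $M_h(|M_Sf|^p)^{1/p}$ in the statement: one then controls the level set $\{z\in\tfrac12 B:M_Sf(z)>\alpha\}$ by Chebyshev with exponent $p$. Your averaging-in-$z$ device, if the bookkeeping is carried out, recenters everything at the fixed point and would yield the slightly stronger $T_e^*f\lesssim M_e(|T^ef|^\delta)^{1/\delta}+M_Sf$; but the kernel case analysis underneath is the same as the paper's, so the averaging is an extra step rather than a simplification.

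Two small corrections. For the local piece $|T_0^ef_1(z)|$ the paper simply uses weak $(1,1)$: $|\{|T^hf_1|>\alpha\}|\lesssim\alpha^{-1}\|f_1\|_1\lesssim\alpha^{-1}|B|\,M_Sf(\bar x)$; no Kolmogorov and no $|f|\le M_Sf$ is needed, and your claimed inequality $\tfrac1{|B|}\int_B|f|\le\bigl(\tfrac1{|B|}\int_B(M_Sf)^p\bigr)^{1/p}$ in fact goes the wrong way when $p<1$. Also, your description ``each dyadic block of volume $\sim 2^{-a(n-1)}2^{-b}$ contributes at most $2^{-a(n-1)}2^{-b}M_Sf(x)$'' cannot be literally right (that double series diverges); what one actually gets are geometric factors of the form $2^{-i}$ and $2^{-m(1-l/2)}$ (respectively $2^{-m(l/2-1)}$ in the non-isotropic case) coming from the homogeneity relations $k+\tfrac12 l=n$, $l<2$ (resp.\ $k+l=n+1$, $l>2$).
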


\begin{proof}

    Here we just prove inequality (\ref{cotlar2}) only since the proof
    of inequality (\ref{cotlar1}) is similar.

    To this end, we may assume that $K(x)=E_k(x)H_{\l}(x)$ for
    $|x|_h\leqslant 2,$  and $K(x)=0,$ for all $|x|_h>M,$ with some
    constant $M.$

    If $\varepsilon\geqslant 1,$ we have $\big|\int\limits_{|y|_h>\varepsilon}K(y)f(x-y)dy\big|\leqslant   \int\limits_{1<|y|_h\leqslant M}|K(y)| |f(x-y)|dy$ and hence,
    \begin{eqnarray*}|{T}_\varepsilon^hf(x)|\lesssim
        \int\limits_{1<|y|_h\leqslant M}|f(x-y)|dy\lesssim
        M_S(f)(x).\end{eqnarray*}

    Now we consider $0<\varepsilon<1$ and fix an $\bar{x}\in\R^n.$ Write $f(x)=f_1(x)+f_2(x),$ where
    $f_1(x)=f(x)$ for $|x-\bar{x}|_h \leqslant \varepsilon$ and
    $f_2(x)=f(x)$ when $|x-\bar{x}|_h>\varepsilon.$

    First we show that $|Tf_2(x)-Tf_2(\bar{x})|\leqslant C(M_S(f)(x)+M_S
    f(\bar{x})),$ whenever $|x-\bar{x}|_h<\frac{\varepsilon}{2}.$
    To this end, observe that if $|x-\bar{x}|_h<\frac{\varepsilon}{2}$ then we have
    \begin{eqnarray*}
        |T^hf_2(x)-T^hf_2(\bar{x})|\leqslant\int\limits_{|\bar{x}-y|_h>\varepsilon}
        |K(x-y)-K(\bar{x}-y)| |f(y)|dy.
    \end{eqnarray*}
    Spliting the above integral by two integrals which take over on $\{y: \varepsilon<|\bar{x}-y|_h<1 \}$ and $\{y: |\bar{x}-y|_h\geqslant 1\},$ respectively. Then the second integral is dominated by
    \begin{eqnarray*}&& \int\limits_{|\bar{x}-y|_h\geqslant 1}
        (|K(x-y)|+|K(\bar{x}-y)|)|f(y)|dy \\
        &\leqslant& C\int\limits_{1\leqslant |\bar{x}-y|_h<M+1} |f(y)|dy \lesssim
        M_h(f)(\bar{x}) \lesssim M_S(f)(\bar{x}).\end{eqnarray*}

    For the case: $\varepsilon<|\bar{x}-y|_h<1,$ writing $K(x-y)-K(\bar{x}-y)=[E_k(x-y)-E_k(\bar{x}-y)]H_l(x-y)+E_k(\bar{x}-y)[H_l(x-y)-H_l(\bar{x}-y)]$ and applying the size and smoothness conditions on $E_k$ and $H_l$ give that the first integral is bounded by the constant multiplying
    $$\int\limits_{\varepsilon<|\bar{x}-y|_h<1 \atop
            |x-\bar{x}|_e>\frac1{2}|\bar{x}-y|_e}
        \frac{1}{|x-y|_e^{k}}\frac{1}{|\bar{x}-y|_h^l}|f(y)|dy+\int\limits_{\varepsilon<|\bar{x}-y|_h<1}
        \frac{\varepsilon}{|\bar{x}-y|_e^{k+1}}\frac{1}{|\bar{x}-y|_h^l}|f(y)|dy
        $$
    where we use the fact $|x|_e\leqslant |x|_h,$ for all
    $|x|_h\leqslant 1$ in the above inequality.

    We first estimate the secon integral above. It is easy to see that this integral is dominated by
    \begin{eqnarray*}
        \int\limits_{\varepsilon<|\bar{x}-y|_h<1\atop
            |\bar{x}'-y'|>\sqrt{|\bar{x}_n-y_n|}
        }\frac{\varepsilon}{|\bar{x}'-y'|^{k+1}}\frac{1}{|\bar{x}-y|_h^{l}}|f(y)|dy+\int\limits_{\varepsilon<|\bar{x}-y|_h<1\atop
            |\bar{x}'-y'|\leqslant \sqrt{|\bar{x}_n-y_n|}
        }\frac{\varepsilon}{|\bar{x}'-y'|^{k+1}}\frac{1}{|\bar{x}-y|_h^{l}}|f(y)|dy.
        \end{eqnarray*}
Note that
    $$\int\limits_{\varepsilon<|\bar{x}-y|_h<1\atop |\bar{x}'-y'|>\sqrt{|\bar{x}_n-y_n|}
        }\frac{\varepsilon}{|\bar{x}'-y'|_h^{k+1}}\frac{1}{|\bar{x}-y|_h^{l}}|f(y)|dy
        \lesssim
        \int\limits_{|\bar{x}-y|_h>\varepsilon}\frac{\varepsilon}{|\bar{x}-y|_h^{n+2}}|f(y)|dy\lesssim M_h
        f(\bar{x}) \lesssim M_S
        f(\bar{x}).$$

    And
    \begin{eqnarray*}\int\limits_{\sqrt{|\bar{x}_n-y_n|}>\frac{\varepsilon}{2} \atop
            |\bar{x}'-y'|<1}\frac{\varepsilon}{|\bar{x}'-y'|^{k+1}}\frac{1}{|\bar{x}-y|_h^{l}}|f(y)|dy
        &\leqslant&
        \sum\limits_{i=0}^\infty\sum\limits_{j=0}^\infty\int\limits_{2^{i-1}\varepsilon\leqslant\sqrt{|\bar{x}_n-y_n|}<2^i\varepsilon
            \atop 2^{-j-1}\leqslant |\bar{x}'-y'|<2^{-j}}
        \frac{\varepsilon}{|\bar{x}'-y'|^{k+1}}\frac{1}{|\bar{x}-y|_h^{l}}|f(y)|dy\\
        &\lesssim&    \sum\limits_{i=0}^\infty \frac1{2^i}M_Sf(\bar{x}) \lesssim
        M_Sf(\bar{x}).\end{eqnarray*}
    Next we estimate $\int\limits_{\varepsilon<|\bar{x}-y|_h<1 \atop
        |x-\bar{x}|_e>\frac1{2}|\bar{x}-y|_e}
    \frac{1}{|x-y|_e^{k}}\frac{1}{|\bar{x}-y|_h^l}|f(y)|dy.$
    Note that if $|x-\bar{x}|_e>\frac1{2}|\bar{x}-y|_e,$ then
    $|x-y|_e\leqslant |\bar{x}-y|_e+|x-\bar{x}|_e\leqslant
    3|x-\bar{x}|_e\leqslant 3|x-\bar{x}|_h\leqslant
    \frac{3}{2}\varepsilon.$ As a consequence,
    $$\int\limits_{\varepsilon<|\bar{x}-y|_h<1 \atop
        |x-\bar{x}|_e>\frac1{2}|\bar{x}-y|_e}
    \frac{1}{|x-y|_e^{k}}\frac{1}{|\bar{x}-y|_h^l}|f(y)|dy\leqslant\int\limits_{\frac{\varepsilon}{2}<|x-y|_h<2}
    \frac{\varepsilon}{|x-y|_e^{k+1}}\frac{1}{|x-y|_h^l}|f(y)|dy$$ and
    repeating the same steps as the above estimate we get
    $$\int\limits_{\varepsilon<|\bar{x}-y|_h<1 \atop
        |x-\bar{x}|_e>\frac1{2}|\bar{x}-y|_e}
    \frac{1}{|x-y|_e^{k}}\frac{1}{|\bar{x}-y|_h^l}|f(y)|dy\lesssim M_Sf(x).$$

    Therefore
    \begin{eqnarray}\label{coteq1}
    \qquad |T^hf_2(\bar{x})|&\leqslant& |T^hf_2(x)|+C \cdot M_S f(x)+C
    \cdot M_S f(\bar{x})\nonumber\\&\leqslant& |T^hf(x)|+|T^hf_1(x)|+C
    \cdot M_S f(x)+C \cdot M_S f(\bar{x}),
    \end{eqnarray}
    whenever $|x-\bar{x}|_h<\frac{\varepsilon}{2}.$

    Now we let $B_h(x,r)=\{y:|x-y|_h<r\},$ then for $\alpha>0,$ we have
    \begin{eqnarray*}\Big|\big\{x\in
        B_h(\bar{x},\frac{\varepsilon}{2}):|T^hf(x)|>\alpha\big\}\Big|&\leqslant&\alpha^{-\delta}\int\limits_{|x-\bar{x}|_h<\frac{\varepsilon}{2}}|T^hf(x)|^\delta dx
        \\&\lesssim& \alpha^{-\delta}|B_h(\bar{x},\frac{\varepsilon}{2})|\cdot M_h\big(|T^hf|^\delta\big)(\bar{x}).\end{eqnarray*}

    And by weak-type (1,1) estimate of $T^h$ we have
    \begin{eqnarray*}\Big|\big\{x\in B_h(\bar{x},\frac{\varepsilon}{2}):|T^hf_1(x)|>\alpha\big\}\Big|&\lesssim&
        \alpha^{-1}\int\limits_{\R^n}|f_1(x)|dx
        =\alpha^{-1}\int\limits_{|x-\bar{x}|_h\leqslant\varepsilon}|f(x)|dx\\
        &\lesssim& \alpha^{-1}|B_h(\bar{x},\frac{\varepsilon}{2})|\cdot M_h
        f(\bar{x})\lesssim
        \alpha^{-1}|B_h(\bar{x},\frac{\varepsilon}{2})|\cdot M_S
        f(\bar{x}).\end{eqnarray*}

    Moreover,
    \begin{eqnarray*}\Big|\big\{x\in B_h(\bar{x},\frac{\varepsilon}{2}):|M_S f(x)|>\alpha\big\}\Big|\lesssim
        \alpha^{-p}\int\limits_{B_h(\bar{x},\frac{\varepsilon}{2})}|M_S f(x)|^pdx
        \lesssim \alpha^{-p}|B_h(\bar{x},\frac{\varepsilon}{2})|\cdot M_h
        (|M_Sf|^p)(\bar{x}).\end{eqnarray*}

    Let $\alpha=C_0\big\{M_h\big(|T^hf|^\delta\big)(\bar{x})^{1/\delta}+M_S
    f(\bar{x})+M_h\big(|M_Sf|^p\big)(\bar{x})^{1/p}\big\},$ where $C_0$ is
    a large constant such that $|\big\{x\in
    B_h(\bar{x},\frac{\varepsilon}{2}):|T^hf(x)|>\alpha\big\}|\leqslant
    \frac1{4}|B_h(\bar{x},\frac{\varepsilon}{2})|$,$|\big\{x\in
    B_h(\bar{x},\frac{\varepsilon}{2}):|T^hf_1(x)|>\alpha\big\}|\leqslant
    \frac1{4}|B_h(\bar{x},\frac{\varepsilon}{2})|$ and $|\big\{x\in
    B_h(\bar{x},\frac{\varepsilon}{2}):|M_S f(x)|>\alpha\big\}|\leqslant
    \frac1{4}|B_h(\bar{x},\frac{\varepsilon}{2})|.$

    As a consequence there exists an $x\in
    B_h\Big(\bar{x},\cfrac{\varepsilon}{2}\Big)$ so that
    $|T^hf(x)|\leqslant \alpha,$ $|T^hf_1(x)|\leqslant \alpha$ and $|M_S
    f(x)|\leqslant \alpha.$ Hence by (\ref{coteq1}), we have
    $$
    |{T}_\varepsilon^h f(\bar{x})|\leqslant (2+C)\alpha+C \cdot M_S
    f(\bar{x}) \leqslant
    (2C_0+C_0C+C)\cdot\big\{M_e\big(|T^hf|^\delta\big)(\bar{x})^{1/\delta}+M_h(|M_S f|^p)(\bar{x})^{1/p}+M_S
    f(\bar{x})\big\}.$$
\end{proof}

\begin{lemma}\label{lm2.5}
    If $f\in L^2(\R^n)$ then $T^h(f)(x)=\lim\limits_{\varepsilon\to 0^+}T^h_\varepsilon (f)(x)$ and $T^e(f)(x)=\lim\limits_{\varepsilon\to 0^+}T^e_\varepsilon (f)(x)$ exist for almost everywhere $x\in \R^n.$
\end{lemma}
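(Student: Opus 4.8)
The plan is to follow the classical scheme for almost‑everywhere convergence of truncated singular integrals: first establish that the maximal operators $T^\ast_h$ and $T^\ast_e$ are bounded on $L^2(\R^n)$, then check that $T^h_\varepsilon(g)$ and $T^e_\varepsilon(g)$ converge pointwise everywhere for $g$ in a dense subclass of $L^2$, and finally combine the two via the oscillation argument. I describe the steps for $T^h$; those for $T^e$ are identical, with $(\ref{cotlar1})$ replacing $(\ref{cotlar2})$ and the isotropic maximal function replacing the non-isotropic one. For the first step, fix exponents $0<\delta<2$ and $0<p<2$ in $(\ref{cotlar2})$, which gives
$$T^\ast_h(f)(x)\leqslant C\big\{M_h(|T^hf|^\delta)(x)^{1/\delta}+M_h(|M_Sf|^p)(x)^{1/p}+M_S(f)(x)\big\}.$$
Since $M_S$ and $M_h$ are bounded on $L^q(\R^n)$ for every $q>1$, while $2/\delta>1$, $2/p>1$, and $T^h$ is bounded on $L^2(\R^n)$ by Theorem~\ref{tm1.6}, each of the three terms on the right has $L^2$-norm $\lesssim\|f\|_2$; hence $\|T^\ast_h f\|_2\leqslant C\|f\|_2$, and in particular $T^\ast_h$ is of weak-type $(2,2)$.

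Next I would show that $\lim_{\varepsilon\to0^+}T^h_\varepsilon(g)(x)$ exists for every $x$ whenever $g\in C_0^\infty(\R^n)$, which is dense in $L^2(\R^n)$. As in the proof of Theorem~\ref{tm1.6}, set $K_0=\varphi K_h$, so that $K-K_0\in L^1(\R^n)$ and $K_0$ coincides on $\{|y|_h\leqslant1\}$ with the kernel $K_h$, which is homogeneous of degree $-(n+1)$ in the non-isotropic sense and has mean value zero on $\{|y|_h=1\}$. Write
$$T^h_\varepsilon(g)(x)=\int_{|y|_h\geqslant\varepsilon}[K-K_0](y)g(x-y)\,dy+\int_{|y|_h>1}K_0(y)g(x-y)\,dy+\widetilde T^h_\varepsilon(g)(x),$$
where $\widetilde T^h_\varepsilon(g)(x)=\int_{\varepsilon\leqslant|y|_h\leqslant1}K_0(y)g(x-y)\,dy$. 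The first integral converges to $(K-K_0)\ast g(x)$ for every $x$ by dominated convergence (dominant $|K-K_0(y)|\,\|g\|_\infty\in L^1$), and the second is independent of $\varepsilon$ for $\varepsilon\leqslant1$. For the third, the cancellation of $K_h$ gives $\int_{\varepsilon'\leqslant|y|_h\leqslant\varepsilon}K_0=0$, so for $0<\varepsilon'<\varepsilon\leqslant1$,
$$\widetilde T^h_{\varepsilon'}(g)(x)-\widetilde T^h_\varepsilon(g)(x)=\int_{\varepsilon'\leqslant|y|_h\leqslant\varepsilon}K_0(y)\,[g(x-y)-g(x)]\,dy.$$
Using $|g(x-y)-g(x)|\leqslant C_g|y|_e\leqslant C_g|y|_h\leqslant C_g|y|_h^\alpha$ for $|y|_h\leqslant1$ (with $0<\alpha<\min\{1,\ell-2\}$, so $\ell>2+\alpha$, exactly as in Theorem~\ref{tm1.6}) and $|K_0(y)|\lesssim|y'|^{-k}(|y'|+|y_n|^{1/2})^{-\ell}$, I obtain
$$\sup_{x}\big|\widetilde T^h_{\varepsilon'}(g)(x)-\widetilde T^h_\varepsilon(g)(x)\big|\leqslant C_g\int_{|y|_h\leqslant\varepsilon}|y'|^{-k}(|y'|+|y_n|^{1/2})^{-\ell}|y|_h^{\alpha}\,dy=C_g\,C(\varepsilon),$$
and $C(\varepsilon)\to0$ as $\varepsilon\to0^+$ by the computation already carried out in the proof of Theorem~\ref{tm1.6}. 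Thus $\{\widetilde T^h_\varepsilon(g)\}$ is uniformly Cauchy as $\varepsilon\to0^+$, and $\lim_{\varepsilon\to0^+}T^h_\varepsilon(g)(x)$ exists for every $x$.

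Finally I would close with the oscillation argument. For $f\in L^2(\R^n)$ set $\Omega f(x)=\limsup_{\varepsilon,\varepsilon'\to0^+}|T^h_\varepsilon(f)(x)-T^h_{\varepsilon'}(f)(x)|$, so that $\Omega f\leqslant 2T^\ast_h(f)$ and $\Omega$ is subadditive. Given $\eta>0$, choose $g\in C_0^\infty(\R^n)$ with $\|f-g\|_2<\eta$; since $\Omega g\equiv0$ by the previous step, $\Omega f=\Omega(f-g)\leqslant 2T^\ast_h(f-g)$, whence, by the $L^2$-boundedness of $T^\ast_h$ and Chebyshev's inequality,
$$\big|\{x:\Omega f(x)>\lambda\}\big|\leqslant\big|\{x:T^\ast_h(f-g)(x)>\lambda/2\}\big|\leqslant\frac{C}{\lambda^2}\|f-g\|_2^2\leqslant\frac{C\eta^2}{\lambda^2}$$
for every $\lambda>0$. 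Letting $\eta\to0$ forces $\Omega f=0$ a.e., i.e. $\lim_{\varepsilon\to0^+}T^h_\varepsilon(f)(x)$ exists for a.e.\ $x$; and since $T^h_\varepsilon(f)\to T^h(f)$ in $L^2$ by Theorem~\ref{tm1.6}, this a.e.\ limit coincides with $T^h(f)$. The argument for $T^e$ is verbatim the same, using $(\ref{cotlar1})$, the $L^2$-boundedness of $T^e$, and the density of $C_0^\infty(\R^n)$. I expect the only delicate point to be the uniform Cauchy estimate of the second step, where the mean-value-zero hypothesis for $K_h$ on the non-isotropic sphere and the vanishing $C(\varepsilon)\to0$ from Theorem~\ref{tm1.6} are essential; once Lemma~\ref{lm2.4} is in hand, the first and third steps are entirely routine.
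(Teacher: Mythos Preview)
Your proof is correct and follows essentially the same classical scheme as the paper: use the Cotlar inequality (Lemma~\ref{lm2.4}) to control $T^\ast_h$ in $L^2$, verify pointwise convergence on a dense subclass of smooth functions, and conclude via the oscillation $\Omega f$. The only minor differences are that the paper takes $\delta=p=2$ in Lemma~\ref{lm2.4} and uses the resulting \emph{weak}-type $(2,2)$ bound on $T^\ast_h$ (via the weak $(1,1)$ property of $M_h$) rather than the strong $L^2$ bound you obtain with $\delta,p<2$, and that the paper simply asserts $\Omega(g;x)=0$ for $g\in C^1_0(\R^n)$ without the detailed decomposition you supply; your more explicit treatment of that step is a welcome clarification but not a genuinely different route.
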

\begin{proof}
 Here we just prove $T^h(f)(x)=\lim\limits_{\varepsilon\to 0^+}T^h_\varepsilon (f)(x)$ exists only, since the
   proof of the other result is similar.
Let$$\Omega(f; x)=\lim\limits_{\varepsilon \rightarrow
0^+}\Big(\sup\limits_{0<t<s<\varepsilon}|{T}_{t}^h(f)(x)-
{T}_{s}^h(f)(x)|\Big),$$ for any $f\in L^2(\R^n).$

Observe that $\Omega(f; x)$ satisfies the following obvious
properties:
$$\Omega(f_1+f_2; x)\leqslant \Omega(f_1; x)+\Omega(f_2; x);$$
$$\Omega(f; x)\leqslant 2{T}_h^*f(x);$$
and
$$\Omega(f; x)=0$$
for all $x\in \R^n$ and  $f\in C^1_0(\R^n).$

Let us suppose that $f\in L^2(\R^n).$ We fix $\alpha>0$ and verify
that $|\{x\in \R^n: \Omega(f; x)>\alpha \}|=0.$ Indeed, let
$\beta>0$ be a real number, which tends to 0, and let $g\in
C^1_0(\R^n)$ be a function such that $\|f-g\|_2\leqslant \beta.$
Then $\Omega(f; x)\leqslant \Omega(f-g; x)+\Omega(g; x)=\Omega(f-g;
x)$ for  all $x\in \R^n$ and hence, by the Lemma \ref{lm2.5} with
$\delta=p=2,$ we get $|\{x\in \R^n: \Omega(f; x)>\alpha
\}|\leqslant |\{x\in \R^n: 2{T}_h^*(f-g)(x)>\alpha \}|
    \leqslant C\alpha^{-2}\|f-g\|_2^2
    \leqslant C\alpha^{-2}\beta^2.$
    Letting $\beta$ tends 0 yields $|\{x\in \R^n: \Omega(f; x)>\alpha \}|=0,$ and hence $\lim\limits_{\varepsilon\to 0^+}\int\limits_{\{y:|x-y|_h\geqslant \varepsilon\}}K(x-y)f(y)dy$
    exists for $f\in L^2(\R^n),$ and almost all $x\in \R^n.$
The proof of the Lemma \ref{lm2.5} is complete.
\end{proof}
The following almost orthogonal estimates are main tools for the proofs of Theorems \ref{tm1.7} and \ref{tm1.9}.
\begin{lemma}\label{lm2.6}
    Suppose that $\psi^{(1)}_j, \psi^{(2)}_k$ and $T^e, T^h$ are same as defined above. Then for $0<\varepsilon<1$ there exits a constant $C$ such that
    \begin{eqnarray}\label{2.14}
    |\psi^{(1)}_j \ast (T^e\psi^{(1)}_{j'})(x)|\leqslant C2^{-|j-j'|\varepsilon}\frac{2^{-(j\wedge j')\varepsilon}}{(2^{-(j\wedge j')}+|x|_e)^{n+\varepsilon}};
    \end{eqnarray}

    \begin{eqnarray}\label{2.15}
    |\psi^{(2)}_k \ast (T^h\psi^{(2)}_{k'})(x)|
    \leqslant C2^{-|k-k'|\varepsilon}\frac{2^{-(k\wedge k')\varepsilon}}{(2^{-(k\wedge k')}+|x|_h)^{n+1+\varepsilon}};
    \end{eqnarray}
\end{lemma}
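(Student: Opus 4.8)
The plan is to establish $(\ref{2.15})$; $(\ref{2.14})$ then follows by the verbatim argument, with the isotropic metric $|\cdot|_e$ in place of $|\cdot|_h$, the second half of Theorem \ref{tm1.6} and Definition \ref{sm1} in place of the first half and Definition \ref{sm2}, and $n+\varepsilon$ in place of $n+1+\varepsilon$. First I would pull $T^h$ outside the outer convolution. Since $\psi^{(2)}_{k'}\in\mathcal{S}(\R^n)\subset L^2(\R^n)$, the function $T^h\psi^{(2)}_{k'}$ is defined by Theorem \ref{tm1.6} as the $L^2$-limit of $K_{\varepsilon'}\ast\psi^{(2)}_{k'}$, where $K_{\varepsilon'}$ denotes $K$ restricted to $\{|y|_h\geqslant\varepsilon'\}$ and hence lies in $L^1(\R^n)$ with compact support; since convolution against the fixed Schwartz function $\psi^{(2)}_k$ is, by Cauchy--Schwarz, (even uniformly) pointwise continuous from $L^2(\R^n)$ into $L^\infty(\R^n)$, we get
$$\psi^{(2)}_k\ast\big(T^h\psi^{(2)}_{k'}\big)(x)=\lim_{\varepsilon'\to 0^+}K_{\varepsilon'}\ast\psi^{(2)}_k\ast\psi^{(2)}_{k'}(x)=T^h\big(\psi^{(2)}_k\ast\psi^{(2)}_{k'}\big)(x),$$
the middle equality being the associativity of convolution (each $K_{\varepsilon'}$ being integrable) and the last one using again the $L^2$-convergence in Theorem \ref{tm1.6} together with the almost-everywhere convergence in Lemma \ref{lm2.5}. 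It therefore suffices to bound $T^h g$ with $g:=\psi^{(2)}_k\ast\psi^{(2)}_{k'}$.

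Next I would identify $g$ as a test function at scale $2^{-(k\wedge k')}$ carrying the almost-orthogonality gain. By $(\ref{1.3})$, $\widehat{\psi^{(2)}_k}$ is supported in $\{2^{k-1}\leqslant|\xi|_h\leqslant 2^{k+1}\}$, so $g\equiv 0$ as soon as $|k-k'|\geqslant 2$ and $(\ref{2.15})$ is then trivial; when $|k-k'|\leqslant 1$, a change of variables shows that $g(x)=c\,2^{(k\wedge k')(n+1)}\Phi\big(2^{k\wedge k'}x',\,2^{2(k\wedge k')}x_n\big)$ for a bounded constant $c$ and one of two fixed Schwartz functions $\Phi$ (according to whether $k=k'$ or $|k-k'|=1$), each of which has all moments vanishing because the product of two of the $\widehat{\psi^{(2)}}$'s vanishes identically near the origin. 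Such a normalization places $g$ in $\M_h(\beta,\varepsilon,2^{-(k\wedge k')},0)$ with norm at most $C$, for every $\beta\in(0,1]$ and $\gamma=\varepsilon\in(0,1)$; since $2^{-|k-k'|\varepsilon}$ is comparable to $1$ on the surviving range $|k-k'|\leqslant1$, equivalently $\|g\|_{\M_h(\beta,\varepsilon,2^{-(k\wedge k')},0)}\leqslant C\,2^{-|k-k'|\varepsilon}$.

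Finally I would invoke the a priori pointwise estimate for $T^h$ on $\M_h$-test functions. The computation already carried out in the proof of Theorem \ref{tm1.6} — namely $(\ref{2.9})$ in the normalized case $r=1$, $x_0=0$, extended by translation and dilation to general $r,x_0$ — shows that for every truncation level $\varepsilon'>0$ and every test function $g\in\M_h(\beta,\gamma,r,x_0)$ (with the parameter restrictions used there, which a suitable $\beta$ and $\gamma=\varepsilon<1$ respect, using $l>2$),
$$\big|T^h_{\varepsilon'}g(x)\big|\leqslant C\,\|g\|_{\M_h(\beta,\gamma,r,x_0)}\,\frac{r^{\gamma}}{(r+|x-x_0|_h)^{n+1+\gamma}},$$
with $C$ independent of $\varepsilon'$; letting $\varepsilon'\to 0^+$ and identifying the limit with $T^h g$ via Lemma \ref{lm2.5}, the same bound holds for $T^h g$. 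Taking $r=2^{-(k\wedge k')}$, $x_0=0$, $\gamma=\varepsilon$ and feeding in the bound on $g$ from the second paragraph yields
$$\big|\psi^{(2)}_k\ast(T^h\psi^{(2)}_{k'})(x)\big|=\big|T^h g(x)\big|\leqslant C\,2^{-|k-k'|\varepsilon}\,\frac{2^{-(k\wedge k')\varepsilon}}{\big(2^{-(k\wedge k')}+|x|_h\big)^{n+1+\varepsilon}},$$
which is $(\ref{2.15})$.

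The step I expect to be the main obstacle is the reduction in the first paragraph: because $T^h$ is defined only as an $L^2$-limit, one must argue with some care that it may be commuted past $\psi^{(2)}_k\ast(\cdot)$ and, more delicately, that the \emph{pointwise} test-function bound — which Theorem \ref{tm1.6} supplies only for the truncations $T^h_{\varepsilon'}$, uniformly in $\varepsilon'$ — passes to the limiting operator $T^h$; this is exactly the point at which the $L^2$- and almost-everywhere convergence statements of Theorem \ref{tm1.6} and Lemma \ref{lm2.5} are needed. Everything else — the scaling and cancellation analysis of $\psi^{(2)}_k\ast\psi^{(2)}_{k'}$ and of $\psi^{(1)}_j\ast\psi^{(1)}_{j'}$ — is the standard Littlewood--Paley almost-orthogonality bookkeeping.
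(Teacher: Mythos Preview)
Your proof is correct and follows essentially the same route as the paper: commute $T^h$ (resp.\ $T^e$) with the outer convolution, identify $\psi^{(2)}_k\ast\psi^{(2)}_{k'}$ as an element of $\M_h(\beta,\varepsilon,2^{-(k\wedge k')},0)$ with norm $\lesssim 2^{-|k-k'|\varepsilon}$, and then apply the pointwise test-function bound $(\ref{2.9})$ established in the proof of Theorem \ref{tm1.6}. The only cosmetic difference is that you exploit the compact Fourier support of $\psi^{(2)}$ to make the almost-orthogonality step trivial (the convolution vanishes for $|k-k'|\geqslant 2$), whereas the paper simply invokes the ``classical almost orthogonal estimates'' for test functions; you are also more explicit than the paper about the commutation of $T^h$ with convolution and the passage from the uniform bound on $T^h_{\varepsilon'}$ to $T^h$.
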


\begin{proof}
The proof of Lemma \ref{lm2.6} follows from Theorem \ref{tm1.6} and the classical almost orthogonal estmates. Indeed, to show the estimate in \ref{2.14}, observer that by Theorem \ref{tm1.6}, for any test function $f\in
{\mathbb M}_e(\beta,\gamma, r, x_0)$ with $0<\beta\leqslant 1, \gamma, r>0$ and $x_0\in \R^n,$
$$ |T^e(f)(x)|=\big|\lim\limits_{\varepsilon\rightarrow 0^+}T^e_\varepsilon(f)(x)\big|\leqslant C \|f\|_{{\mathbb M}_e(\beta,\gamma, r, x_0)}\frac{r^\gamma}{(r+|x-x_0|_e)^{n+1+\gamma}}.$$
Note that $\psi_j^{(1)}(x)\in \mathbb M_e(1,1, 2^{-j},0)$ and $\psi_{j'}^{(1)}(x)\in \mathbb M_e(1,1, 2^{-j'},0).$ By the classical orthogonal estimates, $\psi^{(1)}_j \ast\psi^{(1)}_{j'}(x)\in M_e(1,1, 2^{-(j\wedge j')},0)$ and the test function norm of $\psi^{(1)}_j \ast\psi^{(1)}_{j'}(x)$ is bounded by $C2^{-|j-j'|\varepsilon}$ for $0<\varepsilon<1.$ Thus, we have
    \begin{eqnarray*}
        &|\psi^{(1)}_j \ast T^e\ast\psi^{(1)}_{j'}(x)|\leqslant C \|\psi^{(1)}_j \ast\psi^{(1)}_{j'}\|_{{\mathbb M}_e(1,1, 2^{-(j\wedge j')}, 0)}\frac{2^{-(j\wedge j')}}{(2^{-(j\wedge j')}+|x|_e)^{n+1}}\\ &\leqslant C2^{-|j-j'|\varepsilon}\frac{2^{-(j\wedge j')}}{(2^{-(j\wedge j')}+|x|_e)^{n+1}}.
\end{eqnarray*}
The proof for \ref{2.15} is similar.

\end{proof}

We are ready to show the main results in this paper.

{\bf Proof of Theorem \ref{tm1.7}}
\begin{proof}
We first show that $T^e$ is bounded on $H^p(\R^n).$ Since $L^2\cap H^p(\R^n)$ is dense in $H^p(\R^n),$ we only need to show
$$\|T^e(f)\|_{H^p}\leqslant C\|f\|_{H^p}$$
for $f\in L^2\cap H^p(\R^n).$

To this end, applying the Calder\'on reproducing formula for $f\in L^2(\R^n),$ that is,

$$  f(x)=  \sum \limits_{j\in \Z}  \sum \limits_{\ell \in \Z^{n}}  2^{-nj} \psi^{(1)}_{j} \ast f(2^{-j}\ell)
    \psi^{(1)}_{j}(x-2^{-j}\ell),$$
we have
$$  T^e(f)(x)=  \sum \limits_{j\in \Z}  \sum \limits_{\ell \in \Z^{n}}  2^{-nj} \psi^{(1)}_{j} \ast (T^e f)(2^{-j}\ell)
\psi^{(1)}_{j}(x-2^{-j}\ell).$$ Therefore,
$$\|T^e(f)\|_{H^p}=\|\Big\{ \sum \limits_{j'\in \Z}
\sum\limits_{\ell'\in \mathbb \Z^{n}} |\psi^{(1)}_{j'}\ast
(T^e f)(2^{- j'} \ell')|^2 \chi_{Q_{j'\ell'}}(x)\Big\}^{\frac{1}{2}}\|_p,$$
where $Q_{j'\ell'}$ are all dyadic cubes in the isotropic sense in $\R^n$ with the side length $2^{-j'}$ and the lower left corner at $2^{-j'}\ell',$ and $\chi_{Q}(x)$ are indicator function of $Q.$
Thus,
\begin{eqnarray*}
\|T^e(f)\|_{H^p}
=\|\Big\{\sum \limits_{j'\in \Z}  \sum \limits_{\ell' \in \Z^{n}}
    |\sum \limits_{j\in \Z}  \sum \limits_{\ell \in \Z^{n}}  2^{-nj} \psi^{(1)}_{j} \ast f(2^{-j}\ell)(\psi^{(1)}_{j'}\ast T^e\psi^{(1)}_{j})(2^{-j'}\ell'-2^{-j}\ell)|^2\chi_{Q_{j'\ell'}}(x)\Big\}^{\frac{1}{2}}\|_p.
\end{eqnarray*}

Applying Lemma \ref{lm2.6} with $0<\varepsilon<1$ we get
$$|\psi^{(1)}_{j'}\ast T^e \psi^{(1)}_{j}(2^{-j'}\ell'-2^{-j}\ell)|
\leqslant C2^{-|j-j'|\varepsilon}\frac{2^{-(j\wedge
j')\varepsilon}}{(2^{-(j\wedge
j')}+|2^{-j'}\ell'-2^{-j}\ell|_e)^{n+\varepsilon}}.$$ Now we need
the following:
\begin{lemma}\label{lm2.7}
    Suppose that the functions $S_{k,k'}(x),$ $k,k'\in \Z$ satisfying
    $$|S_{k,k'}(2^{-k}\ell - 2^{-k'}\ell')|\leqslant C_\varepsilon 2^{-|k-k'|\varepsilon} \frac{2^{-(k\wedge k')\varepsilon}}{(2^{-(k\wedge k')}+|2^{-k'}\ell'-2^{-k}\ell|_e)^{n+\varepsilon}},\text{for any}\ 0<\varepsilon<1 \, .
    $$
    Then, for $\frac {n}{n+1}<p\leqslant 1,$
    \begin{align*}
    \Bigg\|\bigg(\sum\limits_{k'\in \Z}\sum\limits_{\ell'\in \Z^n}
    \Bigg|\sum\limits_{k\in \Z}\sum\limits_{\ell\in \Z^n} 2^{-nk}S_{k,k'}(2^{-k}\ell - 2^{-k'}\ell')\lambda_{Q_{k,\ell}}\Big|^2\chi_{Q_{k',\ell'}}\bigg)^{\frac{1}{2}}\Bigg\|_p
    \lesssim \Bigg\| \bigg(\sum\limits_{k=-\infty}^\infty\sum\limits_{\ell\in \Z^n} |\lambda_{Q_{k,\ell}}|^2\chi_{Q_{k,\ell}}\bigg)^{\frac{1}{2}}\Bigg\|_p.
    \end{align*}
\end{lemma}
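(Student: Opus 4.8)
Here is how I would go about it.

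The plan is to reduce the inequality of the lemma to the Fefferman--Stein vector-valued maximal inequality, the essential device being the ``$r$-th power trick'' that keeps the argument alive in the range $p\le1$. Since $\frac{n}{n+1}<p$, I first fix $\varepsilon\in(0,1)$ so close to $1$ that $\frac{n}{n+\varepsilon}<p$, then fix $r$ with $\frac{n}{n+\varepsilon}<r<p$ (so automatically $0<r<p\le1$), and put $\delta:=\varepsilon-\frac{(1-r)n}{r}>0$. Below, $M_e$ is the Hardy--Littlewood maximal operator with respect to the isotropic dilations (see Lemma \ref{lm2.4}); we may and do assume that only finitely many $\lambda_{Q_{k,\ell}}$ are nonzero, the general case following by a routine limiting argument.

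The first and decisive step is the pointwise estimate: for all $k,k'\in\Z$ and all $x$, letting $\ell'$ be the unique index with $x\in Q_{k',\ell'}$,
$$\Big|\sum_{\ell\in\Z^n}2^{-nk}S_{k,k'}(2^{-k}\ell-2^{-k'}\ell')\,\lambda_{Q_{k,\ell}}\Big|\leqslant C\,2^{-|k-k'|\delta}\big(M_e(g_k)(x)\big)^{1/r},\qquad g_k:=\sum_{\ell\in\Z^n}|\lambda_{Q_{k,\ell}}|^r\chi_{Q_{k,\ell}}.$$
One uses that, for $x\in Q_{k',\ell'}$ and $y\in Q_{k,\ell}$ (cubes of side $2^{-k'}$ and $2^{-k}$), the three quantities $2^{-(k\wedge k')}+|2^{-k}\ell-2^{-k'}\ell'|_e$, $2^{-(k\wedge k')}+|x-2^{-k}\ell|_e$ and $2^{-(k\wedge k')}+|x-y|_e$ are comparable; then the elementary inequality $\sum_\ell a_\ell\le(\sum_\ell a_\ell^r)^{1/r}$ (valid for $0<r\le1$, $a_\ell\ge0$) together with $|\lambda_{Q_{k,\ell}}|^r=|Q_{k,\ell}|^{-1}\int_{Q_{k,\ell}}|\lambda_{Q_{k,\ell}}|^r\chi_{Q_{k,\ell}}$ turns the $\ell$-sum into an integral of $g_k$ against a radial decreasing kernel of order $r(n+\varepsilon)$ (which is $L^1$-normalizable precisely because $r(n+\varepsilon)>n$); bounding this normalized kernel by $M_e(g_k)(x)$ and tracking the powers of $2^k,2^{k'}$ coming from $|Q_{k,\ell}|$ and from the normalization, one finds these powers cancel exactly when $k\le k'$ (leaving only the factor $2^{-|k-k'|\varepsilon}$ from the hypothesis), and produce a residual growth $2^{\frac{(1-r)n}{r}|k-k'|}$ when $k>k'$ — which is absorbed by $2^{-|k-k'|\varepsilon}$ precisely because $r>\frac{n}{n+\varepsilon}$. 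This last case, where many small cubes $Q_{k,\ell}$ pile up inside a single $Q_{k',\ell'}$, is where the difficulty concentrates and where the threshold $p>\frac{n}{n+1}$ is forced.

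For the second step, fix $x$; the outer double sum keeps, for each $k'$, only the term with $x\in Q_{k',\ell'}$. Inserting the pointwise estimate, summing over $k$ by Cauchy--Schwarz against the summable sequence $\big(2^{-|k-k'|\delta}\big)_k$, and summing over $k'$, I obtain
$$\Big(\sum_{k',\ell'}\Big|\sum_{k,\ell}2^{-nk}S_{k,k'}(2^{-k}\ell-2^{-k'}\ell')\lambda_{Q_{k,\ell}}\Big|^2\chi_{Q_{k',\ell'}}(x)\Big)^{1/2}\leqslant C\Big(\sum_{k}\big(M_e(g_k)(x)\big)^{2/r}\Big)^{1/2}.$$
Now set $h_k:=\sum_{\ell}|\lambda_{Q_{k,\ell}}|\chi_{Q_{k,\ell}}$; since $\{Q_{k,\ell}\}_\ell$ partitions $\R^n$ for each fixed $k$, $g_k=h_k^r$ and $h_k^{2}=\sum_\ell|\lambda_{Q_{k,\ell}}|^2\chi_{Q_{k,\ell}}$. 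With $q:=2/r>1$ and $s:=p/r>1$ (where $r<p$ is used), the Fefferman--Stein vector-valued maximal inequality applied to $(h_k^r)_k$ gives
$$\Big\|\Big(\sum_k\big(M_e(h_k^r)\big)^{q}\Big)^{1/q}\Big\|_{L^s}\leqslant C\Big\|\Big(\sum_k h_k^{2}\Big)^{1/q}\Big\|_{L^s},$$
using $(h_k^r)^q=h_k^2$; raising both sides to the power $1/r$ and rescaling exponents via $s/q=p/2$, $sr=p$, this reads
$$\Big\|\Big(\sum_k\big(M_e(g_k)\big)^{2/r}\Big)^{1/2}\Big\|_{p}\leqslant C\Big\|\Big(\sum_{k,\ell}|\lambda_{Q_{k,\ell}}|^2\chi_{Q_{k,\ell}}\Big)^{1/2}\Big\|_{p}.$$
Chaining the last three displays yields the claimed inequality.

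I expect the pointwise estimate of the first step to be the main obstacle: because $M_e$ fails to be bounded on $L^p$ for $p\le1$, one cannot estimate the inner sum directly by $M_e$ of the square function, and the replacement by $\big(M_e(|\cdot|^r)\big)^{1/r}$ forces the constraint $r>\frac{n}{n+\varepsilon}$. This is exactly why the almost-diagonal decay of $S_{k,k'}$ must be hypothesized for every $\varepsilon<1$ and why $p>\frac{n}{n+1}$ is the natural range of the method.
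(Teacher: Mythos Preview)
Your proof is correct and follows essentially the same route as the paper's: replace the discrete kernel by the comparable continuous one centered at $x$, apply the $r$-th power trick (the paper calls the exponent $\theta$) to pass to $\big(M_e(\sum_\ell|\lambda_{Q_{k,\ell}}|^r\chi_{Q_{k,\ell}})\big)^{1/r}$ with a leftover exponential factor in $|k-k'|$, sum over $k$ by Cauchy--Schwarz against that factor, and finish with the Fefferman--Stein vector-valued maximal inequality. The paper organizes the maximal-function step via dyadic annuli $A_j$ (citing Frazier--Jawerth) rather than your direct integral comparison, but the content is identical and the threshold $r>\frac{n}{n+\varepsilon}$ arises for the same reason in both arguments.
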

\begin{proof}
    Observe that $2^{-k\vee -k'}+|2^{-k'}\ell'-2^{-k}\ell|_e\sim 2^{-k\vee -k'}+|x-2^{-k}\ell|_e$ for any $x\in Q_{k',\ell'}$ and hence, for any $x\in Q_{k',\ell'},$
    $$|S_{k,k'}(2^{-k}\ell - 2^{-k'}\ell')|
    \lesssim C2^{-|k-k'|\varepsilon}
    \frac{2^{(-k \vee -k')\varepsilon}}
    {(2^{-k \vee -k'} +|x-2^{-k}\ell|_e)^{n+\varepsilon}}.$$
    Let $0<\varepsilon,\theta<1$ such that $\frac{n}{n+1}<\frac{n}{n+\varepsilon}<\theta<p\leqslant 1.$
    Then

$$\sum\limits_{\ell\in \Z^n}2^{-kn}
    |\lambda_{Q_{k,\ell}}|\frac{2^{(-k \vee
    -k')\varepsilon}}
 {(2^{-k \vee -k'} +|x-2^{-k}\ell|_e)^{n+\varepsilon}}
  \leqslant \bigg\{ \sum\limits_{\ell\in \Z^n}2^{-kn\theta}
    |\lambda_{Q_{k,\ell}}|^\theta\frac{2^{(-k \vee
    -k')\varepsilon\theta}}
   {(2^{-k \vee -k'} +|x-2^{-k}\ell|_e)^{(n+\varepsilon)\theta}}\bigg\}^{1\over\theta}.$$
    Denote by $c_{k,\ell}$ the center point of $Q_{k,\ell}, \ell\in \Z^n$. Let $A_0=\{Q_{k,\ell}: |c_{k,\ell}-x|_e\leqslant 2^{-k\vee -k'}\}$
    and $A_j=\{Q_{k,\ell}: 2^{j-1+(-k\vee -k')}<|c_{k,\ell}-x|_e\leqslant 2^{j+(-k\vee -k')}\}$ for $j\in \mathbb N$ and $x\in Q_{k',\ell'}$.
    Thus, for $x\in Q_{k',\ell'},$ we have
    \begin{eqnarray*}&&\sum\limits_{\ell\in \Z^n}2^{-kn\theta}
    |\lambda_{Q_{k,\ell}}|^\theta\frac{2^{(-k \vee -k')\varepsilon\theta}}
    {(2^{-k \vee -k'} +|x-2^{-k}\ell|_e)^{(n+\varepsilon)\theta}}\\
    &=&\sum\limits_{j=0}^\infty\sum\limits_{Q_{k,\ell}\in A_j} 2^{-nk\theta}     \frac {2^{(-k \vee -k')\varepsilon\theta}}{(2^{-k\vee -k'}+|x-2^{-k}\ell|_e)^{(n+\varepsilon)\theta}}
    |\lambda_{Q_{k,\ell}}|^\theta\\
    &\lesssim&  2^{[-k-(-k\vee -k')]{n}(\theta-1)]}M\Big(\sum\limits_{\ell\in \Z^n}|\lambda_{Q_{k,\ell}}|^\theta\chi_{Q_{k,\ell}} \Big)(x),\end{eqnarray*}

\noindent where the last inequality follows from \cite{fj} and $M$ denotes the Hardy--Littlewood maximal operator on $\R^n$.  Therefore
   \begin{eqnarray*}\sum\limits_{\ell\in \Z^n}2^{-kn}
    |\lambda_{Q_{k,\ell}}|\frac{r^{(-k \vee -k')\varepsilon}}
    {(2^{-k \vee -k'} +|x-2^{-k}\ell|_e)^{n+\varepsilon}}
       \lesssim 2^{[-k-(-k\vee -k')]n(1-1/\theta)}
   \bigg\{M\Big(\sum\limits_{\ell\in
   \Z^n}|\lambda_{Q_{k,\ell}}|^\theta\chi_{Q_{k,\ell}}\Big)(x)\bigg\}^{1/\theta}.\end{eqnarray*}
    This gives
   \begin{eqnarray*}
    &&\sum\limits_{\ell\in \Z^n}2^{-nk}|S_{k,k'}(2^{-k}\ell
    -2^{k'}\ell')| |\lambda_{Q_{k,\ell}}|\chi_{Q_{k'\ell'}}(x)\\
    &\lesssim& 2^{-|k-k'|\varepsilon}2^{[-k-(-k'\vee -k)]{n}(1-\frac 1\theta)}
    \bigg\{M\Big(\sum\limits_{\ell\in
    \Z^n}|\lambda_{Q_{k,\ell}}|^\theta\chi_{Q_{k,\ell}}\Big)(x)\bigg\}^{1/\theta}\chi_{Q_{k',\ell'}}(x).\end{eqnarray*}
    By H\"older's inequality, we have
    \begin{align*}
    &\sum\limits_{k'\in \Z}\sum\limits_{\ell'\in \Z^n}\Big|\sum\limits_{k=-\infty}^\infty\sum\limits_{\ell\in \Z^n}2^{-nk}
    S_{k,k'}(2^k\ell - 2^{-k'}\ell')\lambda_{Q_{k,\ell}}\Big|^2\chi_{Q_{k', \ell'}}(x) \\
    &\lesssim \sum\limits_{k'\in \Z}\sum\limits_{\ell'\in \Z^n}\sum\limits_{k=-\infty}^\infty 2^{-|k-k'|\varepsilon}2^{[-k-(-k'\vee -k)]{n}(1-\frac 1\theta)}
    \bigg\{M\Big(\sum\limits_{\ell\in \Z^n}|\lambda_{Q_{k,\ell}}|^\theta\chi_{Q_{k,\ell}}\Big)(x)\bigg\}^{2/\theta}\chi_{Q_{k',\ell'}}(x)\\
    &\lesssim \sum\limits_{k=-\infty}^\infty
    \bigg\{M\Big(\sum\limits_{\ell\in \Z^n}|\lambda_{Q_{k,\ell}}|^\theta\chi_{Q_{k,\ell}}\Big)(x)\bigg\}^{2/\theta},
    \end{align*}
    where the last inequality follows from the facts $\sum\limits_{\ell'\in \Z^n}\chi_{Q_{k',\ell'}}(x)=1$ and
    $$\sup\limits _{k\in \Z}\sum\limits_{k'=-\infty}^\infty 2^{-|k-k'|\varepsilon}2^{[-k-(-k'\vee -k)]{n}(1-\frac 1\theta)}<\infty$$ for $\frac{n}{{n}+\varepsilon}<\theta<p\leqslant 1.$

    Applying the Fefferman--Stein vector-valued maximal function inequality with $\theta<p\leqslant 1$ yields
    \begin{align*}
    &\Big\| \bigg(\sum\limits_{k'\in \Z}\sum\limits_{\ell'\in \Z^n}
    \Big|\sum\limits_{k\in \Z}\sum\limits_{\ell\in \Z^n} 2^{-nk}S_{k,k'}(2^{-k}\ell -2^{-k'}\ell')\lambda_{Q_{k,\ell}}\Big|^2\chi_{Q_{k'\ell'}}\bigg)^{\frac{1}{2}}\Big\|_p  \\
    \lesssim&
    \Bigg\| \bigg(\sum\limits_{k=-\infty}^\infty\sum\limits_{\ell\in \Z^n} |\lambda_{Q_{k,\ell}}|^2\chi_{Q_{k,\ell}}\bigg)^{\frac{1}{2}}\Bigg\|_p.
    \end{align*}
    The proof of Lemma \ref{lm2.7} is complete.
\end{proof}
We return to the proof of Theorem \ref{tm1.7}. For any given $p$
with $\frac {n}{n+1}<p\leqslant 1$ we can choose
$0<\varepsilon<1$ such that $\frac {n}{{n+1
}}<\frac {n}{{n+\varepsilon}}<p\leqslant 1.$ Applying Lemma \ref{lm2.7} with
$S_{j',j}(2^{-j'}\ell'-2^{-j}\ell)=\psi^{(1)}_{j'}\ast
T^e\ast\psi^{(1)}_{j}(2^{-j'}\ell'-2^{-j}\ell)$ and $
\lambda_{j,\ell}=\psi^{(1)}_{j}\ast f(2^{-j}\ell)$ gives
$$\|T^e(f)\|_{H^p}\leqslant C\|f\|_{H^p}.$$
The proof for $T^h$ is similar.

To show that $T^e$ is bounded from $H^p_{\rm com}(\R^n)$ to $H^p_{\rm com}(\R^n), \frac{n+1}{n+2}<p\leqslant 1,$ we only need to show
$$\|T^e(f)\|_{H^p_{\rm com}}\leqslant C\|f\|_{H^p_{\rm com}}$$
for $f\in L^2\cap H^p_{\rm com}(\R^n),$ since $L^2\cap H^p_{\rm com}(\R^n)$ is dense in $H^p_{\rm com}(\R^n).$ By Theorem \ref{tm1.1}, for $f\in L^2(\R^n),$
\begin{eqnarray*}
    f(x', x_n)&=&  \sum \limits_{j,k\in \Z}  \sum \limits_{(\ell',\ell_{n}
        ) \in \Z^{n-1}\times \mathbb Z}  2^{-(n-1)(
        j \wedge k)} \ 2^{-( j \wedge 2k)} (\psi_{j,k} \ast f)(2^{-( j \wedge k)}\ell', 2^{-( j \wedge 2k)}\ell_{n})\\  &&\times
    \psi_{j,k}(x'-2^{-( j \wedge k)}\ell', x_n-2^{-( j \wedge
        2k)}\ell_{n})
\end{eqnarray*}
and thus
\begin{eqnarray*}
    T^e(f)(x', x_n)&=&  \sum \limits_{j,k\in \Z}  \sum \limits_{(\ell',\ell_{n}
        ) \in \Z^{n-1}\times \mathbb Z}  2^{-(n-1)(
        j \wedge k)} \ 2^{-( j \wedge 2k)} (\psi_{j,k} \ast T^e(f))(2^{-( j \wedge k)}\ell', 2^{-( j \wedge 2k)}\ell_{n})\\  &&\times
    \psi_{j,k}(x'-2^{-( j \wedge k)}\ell', x_n-2^{-( j \wedge
        2k)}\ell_{n})(x', x_n).
\end{eqnarray*}
By Definitions \ref{def1.1} and \ref{def1.2}, we get
$$\|T^e(f)\|_{H^p_{\rm com}}=\Big\|\Big\{ \sum \limits_{j,k\in \Z}
\sum\limits_{\ell_1\in \mathbb \Z^{n}} |\psi_{j,k} \ast
T^ef(2^{-( j \wedge k)}\ell_1', 2^{-( j \wedge 2k)}\ell_{1n})|^2
\chi_I(x')
\chi_J(x_n)\Big\}^{\frac{1}{2}}\Big\|_p,$$
where $I$ are dyadic cubes in $\R^{n-1}$ and $J$ are dyadic
    intervals in $\R$ with the side length $\ell(I)=2^{-( j \wedge k)}$
    and $\ell(J)=2^{-( j \wedge 2k)},$ and the left lower corners of $I$
    and the left end points of $J$ are $2^{-( j \wedge k)}\ell_1'$ and
    $2^{-( j \wedge 2k)}\ell_{1n}$ respectively.
Observe that
\begin{eqnarray*}
    &&\psi_{j,k} \ast
    T^ef(2^{-( j \wedge k)}\ell_1', 2^{-( j \wedge 2k)}\ell_{1n})\\
    &=& \sum \limits_{j',k'\in \Z}  \sum \limits_{(\ell_2',\ell_{2n}
        ) \in \Z^{n-1}\times \mathbb Z}  2^{-(n-1)(
        j' \wedge k')} \ 2^{-( j' \wedge 2k')}  (\psi_{j',k'} \ast f)(2^{-( j' \wedge k')}\ell_2', 2^{-( j' \wedge 2k')}\ell_{2n})\\  &&\times
    {\psi_{j,k}} \ast T^e\psi_{j',k'}\big(2^{-( j \wedge k)}\ell_1'-2^{-( j' \wedge k')}\ell_2', 2^{-( j \wedge
        2k)}\ell_{1n}-2^{-( j' \wedge
        2k')}\ell_{2n}\big).
\end{eqnarray*}
Hence
\begin{eqnarray*}
 &&\|T^e(f)\|_{H^p_{\rm com}}\\
 &=&\Bigg\|\Big\{ \sum \limits_{j,k}
 \sum\limits_{\ell_1\in \mathbb \Z^{n}}\Big|\sum \limits_{j',k'\in \Z}  \sum \limits_{(\ell_2',\ell_{2n}
    ) \in \Z^{n-1}\times \mathbb Z}  2^{-(n-1)(
    j' \wedge k')} \ 2^{-( j' \wedge 2k')} (\psi_{j',k'} \ast f)(2^{-( j' \wedge k')}\ell_2', 2^{-( j' \wedge 2k')}\ell_{2n})\\  &&\times
 {\psi_{j,k}} \ast T^e\psi_{j',k'}(2^{-( j \wedge k)}\ell_1'-2^{-( j' \wedge k')}\ell_2', 2^{-( j \wedge
    2k)}\ell_{1n}-2^{-( j' \wedge
    2k')}\ell_{2n})\Big|^2\chi_{I}(x')
 \chi_{J}(x_n)\Big\}^{\frac{1}{2}}\Bigg\|_p.
\end{eqnarray*}
 Note that $\psi_{j,k} \ast T^e \psi_{j',k'}=\psi^{(1)}_j\ast T^e{\psi_{j'}^{(1)}} \ast \psi^{(2)}_{k}\ast\psi^{(2)}_{k'}.$ We get, by (2.14) in Lemma \ref{lm2.6},
 $$|\psi^{(1)}_{j} \ast T^e\psi^{(1)}_{j'}(x)|\leqslant C2^{-|j-j'|\varepsilon}\frac{2^{-(j'\wedge j)\varepsilon}}{(2^{-(j'\wedge j)}+|x|_e)^{n+\varepsilon}}.$$
Appying the following estimate which was given by Lemma 3.1 in \cite{hllrs}
 $$
 |\psi^{(2)}_k \ast \psi^{(2)}_{k'} (x', x_n)|
 \leqslant C 2^{-|k-k'|\varepsilon}\frac{2^{-(k\wedge k')\varepsilon}}{(2^{-(k \wedge
        k')}+|x'|)^{(n-1+\varepsilon)}} \frac{2^{-2(k \wedge
            k')\varepsilon}}{(2^{-2(k \wedge
        k')}+|x_n|)^{1+\varepsilon}},$$
 we get
\begin{eqnarray*}
 &&|\psi_{j,k}\ast T^e \psi_{j',k'}(x', x_n)|\\
 &\leqslant& C 2^{-|j-j'|\varepsilon} 2^{-|k-k'|\varepsilon}
 \frac{2^{-(j\wedge j'\wedge k\wedge k')\varepsilon}}{(2^{-(j\wedge j'\wedge k\wedge k')}+|x'|)^{(n-1+\varepsilon)}} \frac{2^{-(j\wedge j'\wedge 2k\wedge 2k')\varepsilon}}{(2^{-(j\wedge j'\wedge 2k\wedge 2k')}+|x_n|)^{1+\varepsilon}}.
\end{eqnarray*}
Applying Lemma 3.2 and Theorem 1.6 in \cite{hllrs}, we have that, for any $0<\varepsilon<1$,
\begin{eqnarray*}&&|S_{j,k,j',k'}(2^{-( j \wedge k)}\ell_1'-2^{-( j' \wedge k')}\ell_2', 2^{-( j \wedge
    2k)}\ell_{1n}-2^{-( j' \wedge
    2k')}\ell_{2n})|\\&\leqslant& C 2^{-|j-j'|\varepsilon} 2^{-|k-k'|\varepsilon}
\frac{2^{-(j\wedge j'\wedge k\wedge k')\varepsilon}}{(2^{-(j\wedge j'\wedge k\wedge k')}+|x'|)^{(n-1+\varepsilon)}} \frac{2^{-(j\wedge j'\wedge 2k\wedge 2k')\varepsilon}}{(2^{-(j\wedge j'\wedge 2k\wedge 2k')}+|x_n|)^{1+\varepsilon}}, \end{eqnarray*}
then for $\frac {n+1}{n+2}<p\leqslant 1,$
\begin{align*}
&\Big\|\bigg(\sum\limits_{{j,k\in \Z}\atop \ell_1\in \Z^n}
\Big|\sum\limits_{{j',k'\in \Z}\atop \ell_2\in \Z^n}2^{-(n-1)(
    j' \wedge k')} \ 2^{-( j' \wedge 2k')}S_{j,k,j',k'}(2^{-( j \wedge k)}\ell_1'-2^{-( j' \wedge k')}\ell_2', 2^{-( j \wedge
    2k)}\ell_{1n}-2^{-( j' \wedge
    2k')}\ell_{2n})\lambda_{j',k', \ell_2}\Big|^2\\
&\chi_{I'}(x')
\chi_{J'}(x_n)\bigg)^{\frac{1}{2}}\Big\|_p\\
&\lesssim \Big\| \bigg(\sum\limits_{j',k'}\sum\limits_{\ell_2\in \Z^n} |\lambda_{j',k',\ell_2}|^2\chi_{I'}(x')
\chi_{J'}(x_n)\bigg)^{\frac{1}{2}}\Big\|_p \, .
\end{align*}
Applying the above estimate with
\begin{eqnarray*}&&S_{j,k,j',k'}(2^{-( j \wedge k)}\ell_1'-2^{-( j' \wedge k')}\ell_2', 2^{-( j \wedge
    2k)}\ell_{1n}-2^{-( j' \wedge
    2k')}\ell_{2n})\\
&=&\psi_{j,k}\ast T^e \psi_{j',k'}(2^{-( j \wedge k)}\ell_1'-2^{-( j' \wedge k')}\ell_2', 2^{-( j \wedge
    2k)}\ell_{1n}-2^{-( j' \wedge
    2k')}\ell_{2n})\end{eqnarray*} and
$$\lambda_{j',k',\ell_2}=\psi_{j',k'} \ast f(2^{-( j' \wedge k')}\ell_2', 2^{-( j' \wedge 2k')}\ell_{2n})$$ gives
    $$\|T^e(f)\|_{H^p_{\rm com}}\lesssim
    \Big\| \bigg(\sum\limits_{j',k'}\sum\limits_{\ell_2\in \Z^n} |\psi_{j',k'} \ast f(2^{-( j' \wedge k')}\ell_2', 2^{-( j' \wedge 2k')}\ell_{2n})|^2\chi_{I'}(x')
    \chi_{J'}(x_n)\bigg)^{\frac{1}{2}}\Big\|_p\lesssim \|f\|_{H^p_{\rm com}}.$$

The proof of Theorem \ref{tm1.7} is complete.
\end{proof}
As the consequence of Theorem \ref{tm1.7}, we show Corollary \ref{cor1.8}.

{\bf Proof of Corollary \ref{cor1.8}}
\begin{proof}
We only show the results in Corollary \ref{cor1.8} for $T^e$ since the proof for $T^h$ is similar. By Theorem \ref{tm1.7}, $(T^e)^*$ is also bounded on $H^1(\R^n).$ Thus, we can extend $T^e$ to $BMO$ by writing $\langle T^e(f), g\rangle=\langle f, (T^e)^*(g)\rangle$ for $f\in BMO(\R^n)$ and $g\in H^1(\R^n).$ The boundedness of $T^e$ on the $BMO(\R^n)$ follows immediately.

As mentioned in Theorem \ref{tm1.6}, the classical atomic decomposition method for proving the $H^p-L^p, p<1,$ boundedness does not work since for the non-standard singular integrals with the product kernels associated with two different homogeneities it is not avalaibel to get the pointwise estimates. However, by the classical maximal function characterization of the Hardy space $H^p,$ it is easy to see that if $f\in L^2(\R^n)\cap H^p(\R^n)$ then
$$\|f\|_p\leqslant C\|f^*\|_{p}\leqslant C\|f\|_{H^p},$$
where $f^*$ is the maximal fuction of $f.$

By Theorem \ref{tm1.7}, $T^e$ is bounded on $L^2(\R^n)$ and $H^p, \frac{n}{n+1}\leqslant 1,$ applying the above estimate gives desired result:
$$\|T^ef\|_p\leqslant C\|T^ef\|_{H^p}\leqslant C\|f\|_{H^p}.$$
\end{proof}
To show the Theorem \ref{tm1.9} we first recall the following results of Lipschitz functions with isotropic and non-isotropic homogeneities. See \cite{hh} for the proofs and more details.

\begin{theorem}\label{tm2.8}
    Suppose that $\psi^{(1)}_j, \psi^{(2)}_k, \psi_{j,k}$ are same as defined above. Then

    {\rm (1)} $f\in {\rm Lip}^\alpha_e(\R^n), 0<\alpha<1,$ if and only if $f\in \mathcal{S}'/\mathcal{P}$ and
     $$\sup\limits_{t>0, x\in \R^n} |t^\alpha\psi^{(1)}_t(f)(x)|<\infty$$
     and $\|f\|_{{\rm Lip}^\alpha_{\rm e}}\sim \sup\limits_{t>0, x\in \R^n} |t^\alpha\psi^{(1)}_t(f)(x)|<\infty.$

Moreover, if $f\in Lip_{e}^\alpha(\R^n)$ with $0<\alpha<1$ then there exits a sequence $\{f_n\}$ such that $f_n\in L^2(\R^n)\cap Lip_{e}^\alpha(\R^n)$ and $f_n$ converges to $f$ in the distribution sense, and
$\|f_n\|_{Lip_{e}^\alpha}\leqslant C\|f\|_{Lip_{e}^\alpha},$ where the constant $C$ is independent of $f_n$ and $f.$

 {\rm (2)} $f\in {\rm Lip}^\alpha_h(\R^n), 0<\alpha<1,$ if and only if $f\in
\mathcal{S}'/\mathcal{P}$ and
$$\sup\limits_{s>0, x\in \R^n} |s^\alpha\psi^{(2)}_s(f)(x)|<\infty$$
and $\|f\|_{{\rm Lip}^\alpha_{\rm h}}\sim
\sup\limits_{s>0, x\in \R^n} |s^\alpha\psi^{(2)}_s(f)(x)|<\infty.$

Moreover, if $f\in Lip_{h}^\alpha(\R^n)$ with $0<\alpha<1$ then there exits a sequence $\{f_n\}$ such that $f_n\in L^2\cap Lip_{h}^\alpha$ and $f_n$ converges to $f$ in the distribution sense, and
$\|f_n\|_{Lip_{h}^\alpha}\leqslant C\|f\|_{Lip_{h}^\alpha},$ where the constant $C$ is independent of $f_n$ and $f.$

{\rm (3)} $f\in {\rm Lip}^{\alpha}_{\rm com}$ with $\alpha=(\alpha_1, \alpha_2)$ and $0<\alpha_1, \alpha_2<1,$ if and only if $f\in \mathcal{S}'/\mathcal{P}$ and
$$\sup\limits_{t,s>0, x\in \R^n} |t^{\alpha_1}s^{\alpha_2}\psi_{t s}(f)(x)|<\infty$$
and moreover, $\|f\|_{{\rm Lip}^\alpha_{\rm com}}\sim \sup\limits_{t, s>0, x\in \R^n} |t^{\alpha_1}s^{\alpha_2}\psi_{t s}(f)(x)|<\infty.$

Moreover, if $f\in {\rm Lip}_{\rm com}^\alpha(\R^n)$ then there exits a sequence $\{f_n\}$ such that $f_n\in L^2\cap {\rm Lip}_{\rm com}^\alpha$ and $f_n$ converges to $f$ in the distribution sense, and
$\|f_n\|_{{\rm Lip}_{\rm com}^\alpha}\leqslant C\|f\|_{{\rm Lip}_{\rm com}^\alpha},$ where the constant $C$ is independent of $f_n$ and $f.$

\end{theorem}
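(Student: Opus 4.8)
The plan is to prove each of the three equivalences by the same two-step scheme --- a necessity half exploiting the cancellation built into $\psi^{(1)},\psi^{(2)},\psi_{j,k}$, and a sufficiency half built on a Calder\'on reproducing formula together with almost-orthogonality --- and then to obtain the approximation statements by truncating that reproducing formula. Throughout, recall that the Fourier-support conditions on $\psi^{(1)}$ and $\psi^{(2)}$ force $\widehat{\psi^{(1)}}$ and $\widehat{\psi^{(2)}}$ to vanish identically near the origin, so $\psi^{(1)},\psi^{(2)}$, and hence $\psi_{j,k}=\psi^{(1)}_j\ast\psi^{(2)}_k$, have vanishing moments of every order.

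\textbf{Necessity.} For (1), since $\int\psi^{(1)}=0$ one writes $\psi^{(1)}_t\ast f(x)=\int_{\R^n}\psi^{(1)}_t(y)\,\Delta_yf(x)\,dy$; the bound $|\Delta_yf(x)|\le\|f\|_{{\rm Lip}^\alpha_e}|y|_e^\alpha$ together with the rapid decay of $\psi^{(1)}$ gives $t^\alpha\|\psi^{(1)}_t\ast f\|_\infty\lesssim\|f\|_{{\rm Lip}^\alpha_e}$ uniformly in $t$. Part (2) is the same computation with $|\cdot|_e,\psi^{(1)}$ replaced by $|\cdot|_h,\psi^{(2)}$ and the non-isotropic scaling. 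For (3), using the double cancellation of $\psi_{ts}=\psi^{(1)}_t\ast\psi^{(2)}_s$ one writes $\psi_{ts}\ast f(x)=\iint\psi^{(1)}_t(u)\psi^{(2)}_s(v)\,\Delta_u\Delta_vf(x)\,du\,dv$ and inserts $|\Delta_u\Delta_vf(x)|\le\|f\|_{{\rm Lip}^\alpha_{\rm com}}|u|_e^{\alpha_1}|v|_h^{\alpha_2}$ to get $t^{\alpha_1}s^{\alpha_2}\|\psi_{ts}\ast f\|_\infty\lesssim\|f\|_{{\rm Lip}^\alpha_{\rm com}}$.

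\textbf{Sufficiency.} Conversely, set $\mathcal{N}_e(f):=\sup_{t>0,\,x}|t^\alpha\psi^{(1)}_t\ast f(x)|$ and suppose it is finite. Using the one-parameter isotropic Calder\'on reproducing formula $f=\sum_{j\in\Z}\psi^{(1)}_j\ast\psi^{(1)}_j\ast f$ in $\mathcal{S}'/\mathcal{P}$ (as in the proof of Theorem \ref{tm1.7}), write $\Delta_uf=\sum_j(\Delta_u\psi^{(1)}_j)\ast(\psi^{(1)}_j\ast f)$ and split at $2^{-j}\sim|u|_e$. For $2^{-j}\ge|u|_e$ the mean value theorem gives $\|\Delta_u\psi^{(1)}_j\|_1\lesssim 2^j|u|_e$, and since $\|\psi^{(1)}_j\ast f\|_\infty\le 2^{-j\alpha}\mathcal{N}_e(f)$ these terms contribute $\lesssim|u|_e\sum_{2^{-j}\ge|u|_e}2^{j(1-\alpha)}\mathcal{N}_e(f)\lesssim|u|_e^\alpha\mathcal{N}_e(f)$ (here $\alpha<1$); for $2^{-j}<|u|_e$ one uses $\|\Delta_u\psi^{(1)}_j\|_1\le 2\|\psi^{(1)}\|_1$ to get $\lesssim\sum_{2^{-j}<|u|_e}2^{-j\alpha}\mathcal{N}_e(f)\lesssim|u|_e^\alpha\mathcal{N}_e(f)$ (here $\alpha>0$). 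Taking the supremum over $u\ne0$ gives $f\in{\rm Lip}^\alpha_e$ with $\|f\|_{{\rm Lip}^\alpha_e}\lesssim\mathcal{N}_e(f)$. Parts (2) and (3) follow the same pattern, using the non-isotropic and the two-parameter mixed reproducing formulas (the latter being Theorem \ref{tm1.1}); in (3) one estimates $\|\Delta_u\Delta_v\psi_{j,k}\|_1\lesssim\min(1,2^j|u|_e)\min(1,2^k|v|_h)$ and splits the double sum over $(j,k)$ into the four regions according to the positions of $|u|_e$ relative to $2^{-j}$ and of $|v|_h$ relative to $2^{-k}$, each region summing geometrically thanks to the gains $2^{-j\alpha_1},2^{-k\alpha_2}$.

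\textbf{Approximation and main obstacle.} For the last assertion of each part I would take $f_N$ to be the $N$-th partial sum of the corresponding \emph{discrete} Calder\'on identity --- for (1), $f_N=\sum_{|j|\le N}\sum_{|\ell|\le 2^{2N}}2^{-nj}(\psi^{(1)}_j\ast f)(2^{-j}\ell)\,\psi^{(1)}_j(\,\cdot\,-2^{-j}\ell)$, and analogously for (2) and for (3) from Theorem \ref{tm1.1} truncated in $j,k,\ell$. Each $f_N$ is a finite linear combination of Schwartz functions, hence lies in $L^2(\R^n)$; $f_N\to f$ in $\mathcal{S}'/\mathcal{P}$ by the convergence in Theorem \ref{tm1.1} and its one-parameter analogues; and since $\psi^{(1)}_{j'}\ast f_N$ is a truncation of the very expansion used for $f$, with the same almost-orthogonal kernels $\psi^{(1)}_{j'}\ast\psi^{(1)}_j$ and coefficients $(\psi^{(1)}_j\ast f)(2^{-j}\ell)$ bounded by $2^{-j\alpha}\mathcal{N}_e(f)$, the sufficiency estimate runs with a constant independent of $N$, giving $\|f_N\|_{{\rm Lip}^\alpha_e}\le C\|f\|_{{\rm Lip}^\alpha_e}$. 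I expect the main obstacle to be the sufficiency direction of the mixed case (3): there the two scales $2^{-j}$ and $2^{-k}$ vary independently, the bound on $\Delta_u\Delta_v\psi_{j,k}$ splits the double sum into four regions, and verifying that each is geometrically summable is precisely where the hypotheses $0<\alpha_1,\alpha_2<1$ are used ($\alpha_i<1$ in the ``smoothness'' regions, $\alpha_i>0$ in the ``cancellation'' ones). Setting up the two-parameter reproducing formula and its truncations with $\mathcal{S}'/\mathcal{P}$-convergence is the other delicate point, but it is supplied by Theorem \ref{tm1.1}.
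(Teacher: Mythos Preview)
The paper does not actually prove Theorem~\ref{tm2.8}: it is introduced with the words ``we first recall the following results \ldots\ See \cite{hh} for the proofs and more details,'' and no argument is given in the present paper. So there is no in-paper proof to compare your proposal against.

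That said, your outline is the standard route for these Littlewood--Paley characterizations of Lipschitz spaces and is essentially correct. A few comments: for the sufficiency in the mixed case (3), your key estimate $\|\Delta_u\Delta_v\psi_{j,k}\|_1\lesssim\min(1,2^j|u|_e)\min(1,2^k|v|_h)$ is justified by writing $\Delta_u\Delta_v\psi_{j,k}=(\Delta_u\psi^{(1)}_j)\ast(\Delta_v\psi^{(2)}_k)$ and using Young's inequality together with the one-parameter bounds; the resulting double sum then factors into two one-parameter sums, each handled exactly as in your proof of~(1), so the ``four regions'' you anticipate as the main obstacle in fact collapse to a product of two independent geometric sums. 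For the approximation statement you may find it cleaner to use the \emph{continuous} reproducing formula $f=\sum_{j}\psi^{(1)}_j\ast\psi^{(1)}_j\ast f$ (respectively $\sum_{j,k}\psi_{j,k}\ast\psi_{j,k}\ast f$) truncated in $j$ (respectively $j,k$) and then mollified/cut off in space, rather than the discrete identity; either works, but the continuous version avoids having to control the Riemann-sum replacement of $\|\Delta_u\Delta_v\psi_{j,k}\|_1$ by lattice sums. None of this affects the correctness of your plan.
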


We are ready to show the Theorem \ref{tm1.9}.

{\bf Proof of Theorem \ref{tm1.9}}
\begin{proof}
We first show {\rm (1)}. Applying the Calder\'on reproducing formula with $f\in L^2(\R^n),$

$$  f(x)=  \int_{0}^\infty\psi^{(1)}_{t}\ast \psi^{(1)}_{t} \ast f(x)\frac{dt}{t},$$
we have
$$  T^e(f)(x)= \int_{0}^\infty T^e\psi^{(1)}_{t}\ast \psi^{(1)}_{t} \ast f(x)\frac{dt}{t}.$$
Applying Theorem \ref{tm2.8} yields
$$\|T^e f\|_{{\rm Lip}^\alpha_{\rm e}}\leqslant C \sup\limits_{t'>0,x\in \R^n}|{t'}^{\alpha}\psi^{(1)}_{t'}\ast (T^e f)(x)|$$
and hence,
    $$\|T^e(f)\|_{{\rm Lip}^\alpha_{\rm e}}\leqslant C
    \sup\limits_{t'>0, x\in \R^n}\Big|\int_{0}^\infty {t'}^{\alpha} \psi^{(1)}_{t'}\ast T^e\psi^{(1)}_{t}\ast \psi^{(1)}_{t}\ast f)(x)\frac{dt}{t}\Big|.$$
Applying Lemma \ref{lm2.6} with $t=2^{-j},t'=2^{-j'}$ and choose $0<\alpha<\varepsilon<1$ we get

$$|\psi^{(1)}_{t'}\ast T^e \psi^{(1)}_{t}(x)|
\leqslant C\big(\frac{t}{t'}\wedge
\frac{t'}{t}\big)^{\varepsilon}\frac{(t\vee
    t')^{\varepsilon}}{(t\vee t'+|x|_e)^{n+\varepsilon}}.$$
These estimates imply that if $f\in L^2\cap {\rm Lip}_{\rm e}^\alpha,$ then
\begin{eqnarray*}
\|T^ef\|_{{\rm Lip}^\alpha_{\rm e}}&\leqslant&
C\sup\limits_{t'>0, x\in \R^n}\int_{0}^\infty\int\limits_{\R^n}|\big(\frac{t'}{t}\big)^{\alpha}
(\psi^{(1)}_{t'}\ast T^e\psi^{(1)}_{t})\ast
(t^\alpha\psi^{(1)}_{t}\ast
f(x))|\frac{dt}{t}\\
&\lesssim& C\sup\limits_{t>0,x\in \R^n}|t^\alpha\psi^{(1)}_{t}\ast
f(x)|\int_{0}^\infty\int\limits_{\R^n}\big(\frac{t'}{t}\big)^{\alpha}
\big(\frac{t}{t'}\wedge
\frac{t'}{t}\big)^{\varepsilon}\frac{(t\vee
t')^\varepsilon}{(t\vee t'+|x|_e)^{n+\varepsilon}}dx\frac{dt}{t}\\
&\leqslant& C\|f\|_{{\rm Lip}^\alpha_{\rm e}}.
\end{eqnarray*}
We now extend $T^e$ to ${\rm Lip}_{\rm e}^\alpha$ as follows. First, if $f\in {\rm Lip}_{\rm e}^\alpha$ then there exists a sequence $\{f_n\}_{n\in\mathbb Z}\in L^2\cap {\rm Lip}_{\rm e}^\alpha(\R^n)$ such that $f_n$ converges to $f$ in the distribution sense and $\|f_n\|_{{\rm Lip}_{\rm e}^\alpha}\leqslant C\|f\|_{{\rm Lip}_{\rm e}^\alpha}.$ It follows that
$$\|T^e(f_n)-T^e(f_m)\|_{{\rm Lip}_{\rm e}^\alpha}\leqslant  C\|f_n-f_m\|_{{\rm Lip}_{\rm e}^\alpha}$$
and hence $T^e(f_n)$ converges in the distribution sense. We define
$$T^e(f)=\lim\limits_{n\rightarrow\infty} T^e(f_n)$$
in the distribution sense. Thus,
\begin{eqnarray*}
\|T^e(f)\|_{{\rm Lip}_{\rm e}^\alpha}&\lesssim& \sup\limits_{t>0,x\in \R^n}|t^\alpha\psi^{(1)}_{t}\ast
    T^e( f)(x)|
    \lesssim  \sup\limits_{t>0,x\in \R^n}|\lim\limits_{n\rightarrow\infty} t^\alpha\psi_{t}\ast T^e( f_n)(x)|\\
    &\lesssim& \liminf\limits_{n\rightarrow\infty}\|f_n\|_{{\rm Lip}_{\rm e}^\alpha}
    \lesssim \|f\|_{{\rm Lip}_{\rm e}^\alpha}.
    \end{eqnarray*}
The proof of {\rm (2)} for $T^h$ is similar.

We prove {\rm (3)} for $T^e$ only, that is, $T^e$ is bounded on ${\rm Lip}_{\rm com}^\alpha.$
Applying the Calder\'on reproducing formula with $f\in L^2(\R^n),$

$$  f(x)=  \int_{0}^\infty\int_{0}^\infty\psi_{ts}\ast \psi_{ts} \ast f(x)\frac{dt}{t}\frac{ds}{s},$$
we have
$$  T^e(f)(x)= \int_{0}^\infty\int_{0}^\infty (T^e\psi_{ts})\ast \psi_{ts} \ast f(x)\frac{dt}{t}\frac{ds}{s}.$$
Applying Theorem \ref{tm2.8} yields
$$\|T^e f\|_{{\rm Lip}^\alpha_{\rm com}}\leqslant C \sup\limits_{t',s'>0,x\in \R^n}|{t'}^{\alpha_1}{s'}^{\alpha_2}\psi_{t's'}\ast (T^e f)(x)|$$
and hence
$$\|T^e(f)\|_{{\rm Lip}^\alpha_{\rm com}}\leqslant C
\sup\limits_{t',s'>0, x\in \R^n}\Big|\int_{0}^\infty\int_{0}^\infty\int\limits_{\R^n} {t'}^{\alpha_1}{s'}^{\alpha_2} \psi_{t's'}\ast (T^e\psi_{ts})\ast\psi_{ts}\ast f(x)dx\frac{dt}{t}\frac{ds}{s}\Big|.$$
Applying the estimate given in {\rm 2.16} with $2^{-j}=t$ and $2^{-k}=s,$ we have
\begin{eqnarray*}
&&|\psi_{t's'}\ast (T^e \psi_{ts})(x', x_n)|\\
&\leqslant& C \big(\frac{t}{t'}\wedge
\frac{t'}{t}\big)^{\varepsilon}\big(\frac{s}{s'}\wedge
\frac{s'}{s}\big)^{\varepsilon}
\frac{(t\vee t'\vee s\vee s')^\varepsilon}{(t\vee t'\vee s\vee s'+|x'|)^{(n-1+\varepsilon)}} \frac{(t\vee t'\vee s\vee s')^\varepsilon}{(t\vee t'\vee s\vee s'+|x_n|)^{1+\varepsilon}}.
\end{eqnarray*}
These estimates imply that if $f\in L^2\cap {\rm Lip}_{\rm com}^\alpha,$ then
\begin{eqnarray*}
    &&\|T^ef\|_{{\rm Lip}^\alpha_{\rm com}}\\&\leqslant&
    C\sup\limits_{t,s>0, x\in \R^n}|t^{\alpha_1}s^{\alpha_2}\psi_{ts}\ast
    f(x)|\int_{0}^\infty\int_{0}^\infty\int\limits_{\R^n}\big(\frac{t'}{t}\big)^{\alpha_1} \big(\frac{s'}{s}\big)^{\alpha_2}
    |\psi_{t's'}\ast T^e\psi_{ts}(x)|dx\frac{dt}{t}\frac{ds}{s}\\
    &\leqslant& C\|f\|_{{\rm Lip}^\alpha_{\rm com}}.
\end{eqnarray*}
We can extend $T^e$ to ${\rm Lip}_{\rm com}^\alpha$ as above.
\end{proof}

\section{Closing Remarks}

The purpose of this work has been to obtain Hardy space and Lipschitz space estimates
for operators having kernels with mixed-type homogeneities.  Our efforts
were anticipated by a paper of Phong and Stein that treated the case of $L^p$
spaces.  There is still much to be learned and understood about operators of
this type.
\bigskip
\bigskip

\noindent {Acknowledgement: Chaoqiang Tan is supported by National Natural
    Science Foundation  of China (Grant No. 12071272 and 61876104).}

\bigskip
\bigskip
\section*{References}
\begin{enumerate}

\bibitem{fr1} Fabes, E. B., Rivi\'ere, N. M..Singular integrals with mixed homogeneity. Studia Math. {\bf 27} (1966), 19-38.

\bibitem{fr2} E. B. Fabes, N. M. Rivi\'ere,  Symbolic calculus of kernels with mixed homogeneity, Singular integrals(Proc. Sympos. Pure Math., Chicago, Ill, 1966). (1967), 106-127.

\bibitem{fs} C. Fefferman and E. Stein, $H^p$ spaces of several variables, Acta Math. 129 (1972), 137-193.

\bibitem{fj} M. Frazier and B. Jawerth, A discrete transform and decompositions od distribution spaces. J. Func. Anal. 93 (1990), no. 1, 34-170.

\bibitem{h1} Y-S. Han, Calder\'on-type reproducing formular and the $Tb$ theorem.
Rev. Mat. Iberoam. 10 (1994), no 1. 51-91.

\bibitem{hh} Y-C. Han, Y-S. Han. Boundedness of composition operators associated with different homogeneities on Lipschitz spaces. Math. Res. Lett. 23 (2016), no 5, 1387-1403.

\bibitem{hllrs} Y-S. Han, C-C. Lin, G. Lu, Z. Ruan, E. Sawyer.
Hardy spaces associated with different homogeneities and boundedness of composition operators.
 Rev. Mat. Iberoam. {\bf 29} (2013), no. 4, 1127-1157.

\bibitem{hs}Y-S. Han and E. Sawyer, Littlewood-Paley theory on spaces of homogeneous type and the classical function spaces.  Mem. Amer. Math. Asc. 110 (1994), no. 530, vi+126 pp.

\bibitem{k} S. G. Krantz. Lipschitz spaces on stratified groups.
Trans. Amer. Math. Soc. 269 (1982), no. 1, 39-66.

\bibitem{mr} W. R. Madych and N. M. Rivi\'ere, Multipliers of the H\"older classes, J. Func. Anal. 21 (1976), no. 4, 369-379.

\bibitem{m} Y. Meyer, Les nouveaux op\'erateurs de Calder\'on-Zygmund, Ast\'erisque, tome {\bf131} (1985), 237--254.

\bibitem{mc} Y. Meyer,  Wavelets and Operators, Cambridge Studies in Advanced Mathematics 37. Cambridge University Press, Cambridge, 1992.

\bibitem{ps} D. Phong, E. M. Stein.
Some further classes of pseudodifferential and singular-integral operators arising in boundary value problems. I.
Composition of operators. Amer. J. Math. 104 (1982), no. 1, 141-172.

\bibitem{s1} E. M. Stein. Some geometrical concepys arising in hamornic analysis. GAFA 2000(Tel Aviv, 1999). Geom. Func. Anal. 2000, Special Volume, Part 1, 434-453.

\bibitem{s2} E. M. Stein. Harmonic analysis: real-variable methods, orthogonality, and oscillatory integrals.
With the assistance of Timothy S. Murphy. Princeton Mathematical Series, 43. Monographs in Harmonic Analysis,
III. Princeton University Press, Princeton, NJ, 1993.

\bibitem{sy} E. M. Stein, Yung, P.,
Pseudodifferential operators of mixed type adapted to distributions of k-planes.
Math. Res. Lett. 20 (2013), no. 6, 1183-1208.

\bibitem{WW} S. Wainger and G. Weiss, Proceedings of Symp. in Pure Math. 35. 1979.

\end{enumerate}

\medskip
\vskip 0.5cm
\noindent Department of Mathematics, Auburn University, AL
36849-5310, USA.

\noindent {\it E-mail address}: \texttt{hanyong@auburn.edu}

\medskip
\vskip 0.5cm
\noindent Department of Mathematics, Washington University in St. Louis
Campus Box 1146
One Brookings Drive
St. Louis, Missouri 63130,USA

\noindent {\it E-mail address}: \texttt{sk@math.wustl.edu}

\medskip
\vskip 0.5cm

\noindent
\noindent  Department of Mathematics, Shantou
University, Shantou, 515063, R. China.
\noindent {\it E-mail address}: \texttt{cqtan@stu.edu.cn }
\end{document}